\newcommand{\black}{\color{black}}
\newcommand{\overbar}[1]{\mkern 1.5mu\overline{\mkern-1.5mu#1\mkern-1.5mu}\mkern 1.5mu}
\DeclareMathOperator{\divv}{div}
\DeclareMathOperator{\id}{id }
\newcommand{\UU}{\mathrm{U}}
\newcommand{\R}{\mathbf R}
\newcommand{\ac}{\accentset{\circ}}
\newcommand{\DD}{\mathsf{\Omega}}
\newcommand{\eps}{\epsilon}
\newcommand{\y}{f}
\newcommand{\Sb}{\textbf{S}}
\newcommand{\Tb}{\textbf{T}}
\newcommand{\beqn}{\begin{eqnarray*}}
	\newcommand{\eeqn}{\end{eqnarray*}}
\newcommand{\ben}{\begin{equation}}
	\newcommand{\een}{\end{equation}}
\newcommand{\beq}{\begin{eqnarray}}
	\newcommand{\eeq}{\end{eqnarray}}
\newcommand{\benn}{\begin{equation*}}
	\newcommand{\eenn}{\end{equation*}}
\numberwithin{theorem}{section}
\newtheorem{remark}[theorem]{{\textbf Remark}}
\newtheorem{assumption}[theorem]{{\textbf Assumption}}
\newcommand{\TheTitle}{Optimal Actuator Design Based on Shape Calculus}
\newcommand{\TheAuthors}{Kalise, D., Kunisch, K. and Sturm, K.}
\headers{\TheTitle}{\TheAuthors}
\title{{\TheTitle}\thanks{D.K. and K.K. were partially funded by the ERC Advanced Grant OCLOC.}}
\author{
	Dante Kalise\thanks{Department of Mathematics, Imperial College London, South Kensington Campus,
		London SW7 2AZ, United Kingdom (\email{dkaliseb@ic.ac.uk}),}
	\and
	Karl Kunisch\thanks{Johann Radon Institute for Computational and Applied Mathematics, Austrian Academy of Sciences \& Institute of Analysis and Scientific Computing,  University of Graz, Heinrichstr. 36, 8010 Graz, Austria (\email{karl.kunisch@uni-graz.at})}
	\and
	Kevin Sturm \thanks{Institute for Analysis and Scientific Computing, TU Wien, Wiedner Hauptstr. 8-10, 1040 Wien, Austria
		(\email{kevin.sturm@tuwien.ac.at}).}
}
\newcommand{\cJ}{\mathcal{J}}
\begin{document}
	
	\maketitle
	
\begin{abstract}
An approach to optimal actuator design based on shape and topology optimisation techniques is presented. For linear diffusion equations, two scenarios are considered. For the first one, best actuators are determined depending on a given initial condition. In the second scenario, optimal actuators are determined based on all initial conditions not exceeding a chosen norm. Shape and topological sensitivities of these cost functionals are determined. A numerical algorithm for optimal actuator design based on the sensitivities and a level-set method is presented. Numerical results support the proposed methodology.
\end{abstract}

	\begin{keywords}
		shape optimisation, feedback control, topological derivative, shape derivative, level-set method
	\end{keywords}
	
	\begin{AMS}
		49Q10, 49M05, 93B40, 65D99, 93C20.
	\end{AMS}
	
	\section{Introduction}
	
	In engineering, an actuator is a device transforming an external signal into a relevant form of energy for the system in which it is embedded. Actuators can be mechanical, electrical, hydraulic, or magnetic, and are fundamental in the control loop, as they materialise the control action within the physical system. Driven by the need to improve the performance of a control setting, actuator/sensor positioning and design is an important task in modern control engineering which also constitutes a challenging mathematical topic. Optimal actuator positioning and design departs from the standard control design problem where the actuator configuration is known a priori, and addresses a higher hierarchy problem, namely, the optimisation of the \textsl{control to state map}.
	
There is no unique framework which is followed to address optimal actuator problems. However, concepts which immediately suggest themselves -at least for linear dynamics- and which have been addressed in the literature, build on choosing actuator design in such a manner that stabilization or controllability are optimized by an appropriate choice of the controller. This can involve Riccati equations from linear-quadratic regulator theory, and appropriately chosen parameterizations of the set of admissible actuators. The present work partially relates to this stream as we optimise the actuator design based on the performance of the resulting control loop. Within this framework,  we follow a distinctly different approach by casting the optimal actuator design problem as  shape and  topology optimisation problems.   The class of admissible actuators are characteristic functions of measurable sets and their shape is determined by techniques from shape calculus and optimal control. The class of cost functionals which we consider within this work are quadratic ones and account for the stabilization of the closed-loop dynamics. We present the concepts here for the linear heat equation, but the techniques  can be extended to more general classes of functionals and stabilizable dynamical systems. We believe that the concepts of shape and topology optimisation constitute an important tool for solving  actuator positioning problems, and to our knowledge this can be the first step towards this direction. More concretely, our contributions in this paper are:

\begin{itemize}
	
	\item[i)] We study an optimal actuator design problem for linear diffusion equations. In our setting, actuators are parametrised as indicator functions over a subdomain, and are evaluated according to the resulting closed-loop performance for a given initial condition, or among a set of admissible initial conditions not exceeding a certain norm.
	
	\item[ii)] By borrowing a leaf from shape calculus, we derive shape and topological sensitivities for the optimal actuator design problem.
	
	\item[iii)] Based on the formulas obtained in ii),  we construct a gradient-based and a level-set method for the numerical realisation of optimal actuators.
	
	\item[iv)] We present a numerical validation of the proposed computational methodology. Most notably, our numerical experiments indicate that throughout the proposed framework we obtain non-trivial, multi-component actuators, which would be otherwise difficult to forecast based on tuning, heuristics, or experts' knowledge.
	
\end{itemize}

Let us, very briefly comment on the related literature. Most of these endeavors focus on control problems related to ordinary differential equations.  We quote the  two  surveys papers \cite{F03,WJ01} and \cite{M10}. From these publications already it becomes clear that the notion by which optimality is measured  is an important topic in its own right. The literature on optimal actuator positioning for distributed parameter systems is less rich but it also dates back for several decades already. From among the earlier contributions we quote \cite{EP88} where the topic is investigated in a semigroup setting for linear systems, \cite{B72} for a class of linear infinite dimensional filtering problems, and \cite{FD00} where the optimal actuator problem is investigated for hyperbolic problems related to active noise suppression. In the works \cite{M11,M13,M15} the optimal actuator problem is formulated in terms of parameter-dependent linear quadratic regulator problems where the parameters characterize the position of  actuators, with predetermined shape,  for example.  By choosing the actuator position in \cite{HH05} the authors optimise the decay rate in the one-dimensional wave equation. Our research may be most closely related to the recent contribution \cite{PRTRZU17}, where the optimal actuator design is driven by exact controllability considerations, leading to actuators which are chosen on the basis of minimal energy controls steering the system to zero within a specified time uniformly, for a bounded set of initial conditions. Finally, let us mention that the optimal actuator problem is in some sense dual to optimal sensor location problems \cite{R16}, which is of paramount importance.

\paragraph{Structure of the paper}
	The paper is organised as follows.

In Section~\ref{sec:three},   the optimal control problems, with respect to which optimal actuators are sought later, are introduced. While the first formulation depends on a single initial condition for the system dynamics,  in the second formulation the optimal actuator  mitigates the worst closed-loop performance among all the possible initial conditions.
	
	In Sections~\ref{sec:four} and \ref{sec:five} we derive the shape and topological sensitivities associated to the aforedescribed optimal actuator design problems.
	
	Section~\ref{sec:six} is devoted to describing a numerical approach which constructs the optimal actuator based on the shape and topological derivatives computed in Sections \ref{sec:four} and \ref{sec:five}. It involves the numerical realisation of the sensitivities and iterative gradient-based and level-set approaches.
	
	Finally in  Section~\ref{sec:seven} we report on  computations involving numerical tests for our model problem in dimensions one and two.

\subsection{Notation}
Let $\DD\subset \R^d$, $d=1,2,3$ be either a bounded domain with $C^{1,1}$ boundary $\partial \Omega$ or a convex domain, and let $T>0$ be a fixed time. The space-time cylinder is denoted by $\Omega_T := \Omega\times (0,T]$.    Further  by $H^1(\DD)$ denotes the  Sobolev space of square integrable functions on $\DD$ with square integrable  weak  derivative. The space $H^1_0(\DD)$ comprises all functions in $H^1(\DD)$ that have trace zero on $\partial \DD$ and $H^{-1}(\DD)$ stands for the dual of $H^1_0(\DD)$. The space $\ac C^{0,1}(\overbar\DD , \R^d)$ comprises all Lipschitz
continuous functions on $\overbar{\DD}$ vanishing on $\partial \DD $. It is
a closed subspace of $C^{0,1}(\overbar\DD ,\R^d)$, the space of Lipschitz continuous mappings
defined on $\overbar\DD $.
Similarly we denote by $\ac C^k(\overbar\DD ,\R^d)$ all $k$-times differentiable
functions on $\overbar\DD $ vanishing on $\partial \DD $. We use the notation $\partial f$ for the
Jacobian of a function $f$. Further
$B_\eps(x)$ stands for the open ball centered at $x\in \R^d$ with radius $\eps >0$. Its closure is denoted $\overbar{B}_\eps(x) := \overbar{B_\eps(x)}$.
By $\mathfrak Y(\DD)$ we denote the set of all measurable subsets $\omega\subset \DD$. We say that a sequence $(\omega_n)$ in $\mathfrak Y(\DD)$ converges to an element $\omega\in \mathfrak Y(\DD)$ if
$\chi_{\omega_n}\to \chi_\omega $ in $L_1(\DD)$ as $n\to \infty$, where $\chi_{\omega}$ denotes the characteristic function of $\omega$. In this case we write $\omega_n \to \omega$.  Notice that $\chi_{\omega_n}\to \chi_\omega $ in $L_1(\DD)$ as $n\to \infty$ if and only if $\chi_{\omega_n}\to \chi_\omega $ in $L_p(\DD)$ as $n\to \infty$ for all $p\in [1,\infty)$.
For two sets $A,B\subset \R^d$ we write $A\Subset B$ is $\overbar{A}$ is compact and $\overbar{A}\subset B$.


\section{Problem formulation and first properties}\label{sec:three}

\subsection{Problem formulation}
Our goal is to study an optimal actor positioning and design problem for a controlled linear parabolic equation. Let $\mathcal U$ be a  closed and convex subset of $L_2(\DD)$ with $0\in \mathcal U$. For each $\omega\in \mathfrak Y(\DD)$ the set   $\chi_\omega \mathcal U$ is a convex subset of $L_2(\DD)$. The elements of the space $\mathfrak Y(\DD)$ are referred to as \emph{actuators}. The choices $\mathcal U= L_2(\DD)$ and $\mathcal U = \R$, considered as  the space of constant functions on $\Omega$, will play a special role.
Further, $\UU := L_2(0,T;\mathcal U)$
denotes the space of time-dependent controls, which is equipped with the topology induced by the $L_2(0,T;L_2(\DD))-$norm. We denote  by  $K$ a nonempty, weakly closed subset of $H^1_0(\Omega)$. It will serve as the set of admissible initial conditions for the stable formulation of our optimal actuator positioning problem.

With these preliminaries we consider for every triplet $(\omega,u,\y)\in \mathfrak Y(\DD)\times \UU \times H^1_0(\Omega)$  the following linear parabolic equation: find $y:\overbar \Omega\times [0,T]\to \R$ satisfying
\begin{subequations}
	\label{eq:state_system}
	\begin{align}
		\partial_t y   - \Delta y =   \chi_\omega u  & \qquad \text{ in } \DD\times (0,T], \label{eq:statea} \\
		y=0 & \qquad\text{ on } \partial \DD\times (0,T], \label{eq:stateb}\\
		y(0)=f & \qquad\text{ on } \DD. \label{eq:statec}
	\end{align}
\end{subequations}

\noindent In the following, we discuss the well-posedness of the system dynamics \ref{eq:state_system} and the associated linear-quadratic optimal control problem, to finally state the optimal actuator design problem.

 \paragraph{Well-posedness of the linear parabolic problem}
It is a classical result \cite[p. 356, Theorem 3]{EV98}  that system \eqref{eq:state_system} admits a unique weak solution $y=y^{u,\y,\omega}$ in $W(0,T)$, where
\[
W(0,T) :=  \{ y\in L_2(0,T;H^1_0(\DD)):\; \partial_t y \in L_2(0,T;H^{-1}(\DD)) \},
\]
which satisfies by definition,
\ben\label{eq:weak_form}
\langle \partial_t y, \varphi\rangle_{H^{-1}, H^1_0}  + \int_{\DD}\nabla y \cdot \nabla \varphi \; dx =  \int_\DD  \chi_\omega u  \varphi \; dx
\een
for all  $\varphi\in H^1_0(\DD)$ for a.e.  $t\in (0,T]$, and $ y(0)=\y$. For the shape calculus of Section 4 we require that $f \in H_0^1 (\Omega)$.
In this case the state variable enjoys additional regularity properties. In fact, in \cite[p. 360, Theorem 5]{EV98} it is shown that for  $\y\in H^1_0(\DD)$ the weak solution $y^{\omega,u,\y}$ satisfies
\ben\label{eq:reg+y}
y^{u,f,\omega}\in L_2(0,T,H^2(\DD))\cap L_\infty(0,T;H^1_0(\DD)), \quad \partial_t y^{u,f,\omega}\in L_2(0,T;L_2(\DD))
\een
and there is a constant $c>0$, independent of $\omega, \y$ and $u$, such that
\ben\label{eq:estimate_apriori}
\|y^{u,f,\omega}\|_{L_\infty(H^1)} + \|y^{u,f,\omega}\|_{L_2(H^2)} + \|\partial_t y^{u,f,\omega}\|_{L_2(L_2)} \le  c(\|  \chi_\omega u \|_{L_2(L_2)} + \|f\|_{H^1}).
\een
Thanks to the lemma of Aubin-Lions the space
\ben\label{eq:embedding}
Z(0,T) := \{y\in L_2(0,T;H^2(\Omega)\cap H^1_0(\Omega)):\; \partial_t y \in L_2(0,T;L^2 (\Omega))  \}
\een
is compactly embedded into $L_\infty(0,T;H^1_0(\DD))$.


\paragraph{The linear-quadratic optimal control problem} After having discussed the well-posedness of the linear parabolic problem, we recall a standard linear-quadratic optimal control problem associated to a given actuator $\omega$. Let $\gamma >0$ be given. First we define for every triplet $(\omega,f,u)\in \mathfrak Y(\DD)\times  H^1_0(\Omega)\times \UU$ the cost functional
\ben\label{eq:cost_J}
J(\omega,u,\y) :=  \int_0^T\|y^{u,f,\omega}(t)\|_{L_2(\DD)}^2 + \gamma \| u (t)\|_{L_2(\DD)}^2\; dt.
\een
By taking the infimum in \eqref{eq:cost_J} over all controls $u\in \UU$ we obtain the function $\mathcal J_1$, which is defined for all $(\omega,f)\in \mathfrak Y(\DD)\times  H^1_0(\Omega)$:
\ben\label{eq:cost}
\mathcal J_1(\omega,\y) :=  \inf_{u\in \UU}  J(\omega,u,f).
\een
It is well known, see e.g. \cite{TR05} that the minimisation problem on the right hand side of \eqref{eq:cost}, constrained to the dynamics \eqref{eq:state_system}  admits a unique solution. As a result, the  function $\mathcal J_1(\omega,\y)$ is well-defined.  The minimiser  $\overbar u$ of \eqref{eq:cost} depends on the initial condition $f$ and the set $\omega$, i.e., $\overbar u = \overbar u^{\omega,\y}$.
In order to eliminate the dependence of the optimal actuator $\omega$ on the initial condition $f$ we define a robust  function $\mathcal J_2$ by  taking the supremum in \eqref{eq:cost} over all normalized initial conditions $\y$ in $K$:
\ben\label{eq:final_cost}
\mathcal J_2(\omega) := \sup_{\substack{\y\in K,\\\|\y\|_{H^1_0(\Omega)}\le 1}}\mathcal J_1(\omega,f).
\een
We show later on that the supremum on the right hand side of \eqref{eq:final_cost} is actually attained.
\paragraph{The optimal actuator design problem} We now have all the ingredients to state the optimal actuator design problem we shall study in the present work.
In the subsequent sections we are concerned with the following minimisation problem
\ben\label{eq:min_constrained_problem1}
\begin{split}
	\inf_{\substack{\omega\in  \mathfrak Y(\DD)\\ |\omega|  = c}} \mathcal J_1(\omega,f), \text{ for } f \in K,
\end{split}
\een
where $c\in (0,|\DD|)$ is the measure of the prescribed volume of the actuator $\omega$. That is, for a given initial condition $f$ and a given volume constraint $c$, we design the actuator $\omega$ according to the closed-loop performance of the resulting linear-quadratic control problem \eqref{eq:cost}. Note that no further constraint concerning the actuator topology is considered. Buidling upon this problem, we shall also study the problem
\ben\label{eq:min_constrained_problem}
\begin{split}
	\inf_{\substack{\omega\in  \mathfrak Y(\DD)\\ |\omega|  = c}} \mathcal J_2(\omega),
\end{split}
\een
where the dependence of the optimal actuator on the initial condition of the dynamics is removed by minimising among the set of all the normalised initial condition $f\in K$.

Finally, another problem of interest which can be studied within the present framework is the  \textsl{optimal actuator positioning} problem, where the topology of the actuator is fixed, and only its position is optimised. Given a fixed set $\omega_0\subset \DD$ we study the optimal actuator positioning problem by solving
\ben\label{eq:min_constrained_position_problem1}
\begin{split}
	\inf_{X\in \R^d} \mathcal J_1((\id +X)(\omega_0),f), \text{ for } f \in K,
\end{split}
\een
and
\ben\label{eq:min_constrained_position_problem}
\begin{split}
	\inf_{X\in \R^d} \mathcal J_2((\id +X)(\omega_0)),
\end{split}
\een
where $(\id +X)(\omega_0) = \{x+X:\; x\in \omega_0\}$, i.e., we restrict our optimisation procedure to a set of actuator translations.

Our goal is to characterize shape and topological derivatives for $\mathcal J_1(\omega,f)$ (for fixed $f$) and $\mathcal J_2(\omega)$ in order to develop gradient type algorithms to solve \eqref{eq:min_constrained_problem1} and \eqref{eq:min_constrained_problem}. The results presented in Sections \ref{sec:four} and \ref{sec:five} can  also be utilized to derive optimality conditions for problems \eqref{eq:min_constrained_position_problem1} and \eqref{eq:min_constrained_position_problem}. In addition, we investigate numerically whether the proposed methodology provides results which coincide with physical intuition.

 While the existence of optimal shapes according to  \eqref{eq:min_constrained_problem1} and \eqref{eq:min_constrained_problem} is certainly also an interesting task,  this issue is postponed to future work. We  mention \cite{PRTRZU17} where a problem similar  to ours but with different cost functional is considered.

\subsection{Optimality system for $\mathcal{J}_1$}\label{subsec_opt}
The unique solution $\bar u\in \UU$ of the minimisation problem on the right hand side of \eqref{eq:cost} can be characterised by the first order necessary optimality condition
\ben\label{eq:solution_opt_control}
\partial_u J(\omega,\bar u,\y)(v-\bar u) \ge 0\quad \text{ for all } v\in \UU.
\een
The function $\bar u\in \UU$ satisfies the variational inequality \eqref{eq:solution_opt_control} if and only if there is a multiplier $\overbar p\in W(0,T)$ such that  the triplet
$(\overbar u,\overbar y,\overbar p)\in \UU\times W(0,T)\times W(0,T)$  solves
\begin{subequations}
	\label{eq:optimality_system}
	\begin{align}
		\int_{\Omega_T} \partial_t \overbar y \varphi   + \nabla \overbar y \cdot \nabla \varphi \; dx\;dt & =  \int_{\Omega_T}  \chi_\omega \overbar u \varphi \; dx\;dt \quad \text{ for all } \varphi\in W(0,T),  \label{eq:opt_1}\\
		\int_{\Omega_T} \partial_t \psi  \overbar p + \nabla \psi \cdot \nabla \overbar p \; dx\;dt & =  -\int_{\Omega_T} 2 \overbar y \psi \; dx\;dt \quad \text{ for all } \psi\in W(0,T), \label{eq:opt_2} \\
		\int_{\Omega} (2\gamma \overbar u - \chi_\omega \bar p)& (v-\overbar u)\; dx \ge  0 \quad \text{ for all } v\in \mathcal U,\quad \text{ a.e. } t\in (0,T),  \label{eq:opt_3}
	\end{align}
\end{subequations}
supplemented with the initial and terminal conditions $\overbar y(0)=\y$ and $\overbar p(T)=0$ a.e. in $\DD$.
Two cases are of particular interest to us:

\begin{remark}\label{rem:special_control_space}
	\begin{itemize}
		\item[(a)] If $\mathcal U = L_2(\DD)$, then \eqref{eq:opt_3} is equivalent to $2\gamma \bar u =\chi_\omega \bar p$ a.e. on $\DD\times (0,T)$.
		\item[(b)] If $\mathcal U = \R$, then \eqref{eq:opt_3} is equivalent to
		$ 2\gamma |\DD|\bar u = \int_{\omega} \bar p \; dx$ a.e. on $(0,T)$.
	\end{itemize}
\end{remark}

\subsection{Well-posedness of $\mathcal{J}_2$}
Given $\omega\in \mathfrak Y(\DD)$ and $\y\in K$, we use the notation $\overbar{u}^{\y,\omega}$ to denote the unique minimiser of $J(\omega,\cdot,\y)$ over $\UU$.
\begin{lemma}\label{lem:continuity_f}
	Let $(\y_n)$ be a sequence in $K$ that converges weakly in $H^1_0(\Omega)$ to  $f\in K$, let $(\omega_n)$ be a sequence in $\mathfrak Y(\DD)$ that converges to $\omega\in \mathfrak Y(\DD)$, and let $(u_n)$ be a sequence in $\UU$ that converges weakly to a function $u\in \UU$. Then we have
	\ben\label{eq:convergence_shape_un_yn}
	\begin{split}
		y^{u_n,\y_n,\omega_n}  &\to y^{u,f,\omega} \quad \text{ in } L_2(0,T;H^1_0(\Omega)) \quad \text{ as } n\to \infty, \\
		y^{u_n,\y_n,\omega_n}  & \rightharpoonup y^{u,f,\omega} \quad \text{ in } L_2(0,T;H^2(\Omega)\cap H^1_0(\Omega))\quad \text{ as } n\to \infty.
	\end{split}
	\een
\end{lemma}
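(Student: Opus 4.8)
The plan is to run a standard compactness argument for the parabolic equation: deduce uniform bounds from the a priori estimate \eqref{eq:estimate_apriori}, extract a limit by means of the compact embedding \eqref{eq:embedding}, identify it with $y^{u,f,\omega}$ through the weak formulation of \eqref{eq:state_system}, and conclude by uniqueness. Throughout write $y_n:=y^{u_n,\y_n,\omega_n}$.

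First I would collect the uniform bounds. Weakly convergent sequences are bounded, so $(\y_n)$ is bounded in $H^1_0(\Omega)$ and $(u_n)$ is bounded in $L_2(0,T;L_2(\Omega))$; from the pointwise inequality $|\chi_{\omega_n}u_n|\le|u_n|$ it follows that $(\chi_{\omega_n}u_n)$ is bounded in $L_2(0,T;L_2(\Omega))$ as well. Since the constant in \eqref{eq:estimate_apriori} is independent of $\omega,\y,u$, the sequence $(y_n)$ is bounded in $Z(0,T)$. Passing to a subsequence (not relabeled), $y_n\rightharpoonup\tilde y$ in $L_2(0,T;H^2(\Omega)\cap H^1_0(\Omega))$ and $\partial_t y_n\rightharpoonup\partial_t\tilde y$ in $L_2(0,T;L_2(\Omega))$, and by the compactness of the embedding \eqref{eq:embedding} this subsequence may be taken to satisfy $y_n\to\tilde y$ in $L_\infty(0,T;H^1_0(\Omega))$, in particular in $L_2(0,T;H^1_0(\Omega))$.

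Next I would identify $\tilde y$. Testing the weak form of \eqref{eq:state_system} for $y_n$ against $\Phi\in C^1([0,T];H^1_0(\Omega))$ with $\Phi(T)=0$ and integrating by parts in time yields
\[
-(\y_n,\Phi(0))_{L_2(\Omega)}-\int_0^T(y_n,\partial_t\Phi)_{L_2(\Omega)}\,dt+\int_{\Omega_T}\nabla y_n\cdot\nabla\Phi\,dx\,dt=\int_{\Omega_T}\chi_{\omega_n}u_n\,\Phi\,dx\,dt.
\]
By the convergences established above (together with $\y_n\rightharpoonup f$ in $L_2(\Omega)$, which follows from $\y_n\rightharpoonup f$ in $H^1_0(\Omega)$) the three terms on the left pass to the limit with $f$ and $\tilde y$ in place of $\y_n$ and $y_n$. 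For the right-hand side — the only place where the hypothesis $\omega_n\to\omega$ is genuinely used — I would note that $\chi_{\omega_n}\to\chi_\omega$ in $L_3(\Omega)$, by the $L_p$-equivalence recorded in the notation section, and since $H^1_0(\Omega)\hookrightarrow L_6(\Omega)$ for $d\le 3$, Hölder's inequality gives $\chi_{\omega_n}\Phi\to\chi_\omega\Phi$ in $L_2(\Omega_T)$; combined with $u_n\rightharpoonup u$ in $L_2(\Omega_T)$ this produces the weak–strong limit $\int_{\Omega_T}\chi_{\omega_n}u_n\Phi\,dx\,dt\to\int_{\Omega_T}\chi_\omega u\,\Phi\,dx\,dt$. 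Hence $\tilde y\in Z(0,T)$ solves the weak form of \eqref{eq:state_system} with data $(\omega,u,f)$, so $\tilde y=y^{u,f,\omega}$ by uniqueness of the weak solution \cite{EV98}.

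Since every subsequence of $(y_n)$ has a further subsequence converging — strongly in $L_2(0,T;H^1_0(\Omega))$ and weakly in $L_2(0,T;H^2(\Omega)\cap H^1_0(\Omega))$ — to the fixed limit $y^{u,f,\omega}$, the full sequence converges in both senses, which is the assertion. I expect the only genuinely delicate step to be the passage to the limit on the right-hand side: neither $\chi_{\omega_n}$ nor $u_n$ converges strongly, so one must rewrite $\chi_{\omega_n}u_n\Phi$ as a bona fide weak–strong pairing by transferring the test function onto the characteristic function and playing the strong $L_p$-convergence of $(\chi_{\omega_n})$ off against the Sobolev regularity of $\Phi$.
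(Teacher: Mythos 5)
Your proposal is correct and follows essentially the same route as the paper's (very terse) proof: uniform bounds from \eqref{eq:estimate_apriori}, extraction of a weakly convergent subsequence in $Z(0,T)$ which converges strongly in $L_2(0,T;H^1_0(\Omega))$ by compactness, passage to the limit in the weak formulation, and identification of the limit with $y^{u,f,\omega}$ by uniqueness. The details you supply beyond the paper's sketch — the weak–strong handling of $\chi_{\omega_n}u_n$ via the $L_p$-convergence of the characteristic functions, the treatment of the initial condition through test functions vanishing at $T$, and the subsequence–subsequence argument for the full sequence — are all sound and consistent with the paper's argument.
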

\begin{proof}
	The a-priori estimate \eqref{eq:estimate_apriori} and the compact embedding $Z(0,T)\subset \\ L_2(0,T;H^1_0(\DD))$ show that we can extract a subsequence of $(y^{u_n,\y_n,\omega_n})$ that converges weakly to an element $y$ in $L_2(0,T;H^2(\Omega)\cap H^1_0(\Omega))$ and strongly in $L_2(0,T;H^1_0(\Omega))$. Using this to pass to the limit in \eqref{eq:weak_form} with $(u,\y,\omega)$ replaced by $(u_n,\y_n,\omega_n)$ implies by uniqueness that $y = y^{u,f,w}.$
\end{proof}

\begin{lemma}\label{lem:continuity_u}
	Let $(\y_n)$ be a sequence in $H^1_0(\Omega)$ converging weakly to $f\in H^1_0(\Omega)$ and let $(\omega_n)$ be a sequence in $\mathfrak Y(\DD)$ that converges to $\omega\in \mathfrak Y(\DD)$. Then we have
	\ben
	\bar u^{\y_n,\omega_n}  \to \bar u^{f,\omega} \quad \text{ in } L_2(0,T;L_2(\DD)) \text{ as } n\to \infty.
	\een
\end{lemma}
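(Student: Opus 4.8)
The plan is to argue by the direct method: use the coercivity of $J(\omega,\cdot,f)$ in the control to get a bounded sequence of optimal controls, identify any weak limit as the optimal control for the limiting data by means of Lemma~\ref{lem:continuity_f}, and finally upgrade weak convergence to strong convergence using the Radon--Riesz property of the Hilbert space $\UU$. Concretely, since $0\in\mathcal U$ the zero control is admissible, so $J(\omega_n,\bar u^{\y_n,\omega_n},\y_n)\le J(\omega_n,0,\y_n)=\int_0^T\|y^{0,\y_n,\omega_n}(t)\|_{L_2}^2\,dt$, which by \eqref{eq:estimate_apriori} (with $\chi_{\omega_n}\cdot 0=0$) and the uniform bound on $\|\y_n\|_{H^1_0}$ (weakly convergent sequences are bounded) is bounded uniformly in $n$. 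The term $\gamma\|\bar u^{\y_n,\omega_n}\|_{\UU}^2$ on the left is therefore bounded, so $(\bar u^{\y_n,\omega_n})$ is bounded in $\UU$, and passing to a subsequence (not relabelled) we obtain $\bar u^{\y_n,\omega_n}\rightharpoonup\tilde u$ in $\UU$; as $\UU=L_2(0,T;\mathcal U)$ is closed and convex, hence weakly closed, $\tilde u\in\UU$.

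For the identification step, Lemma~\ref{lem:continuity_f} applied to $(\bar u^{\y_n,\omega_n},\y_n,\omega_n)$ gives $y^{\bar u^{\y_n,\omega_n},\y_n,\omega_n}\to y^{\tilde u,f,\omega}$ strongly in $L_2(0,T;H^1_0(\Omega))$, so $\int_0^T\|y^{\bar u^{\y_n,\omega_n},\y_n,\omega_n}(t)\|_{L_2}^2\,dt\to\int_0^T\|y^{\tilde u,f,\omega}(t)\|_{L_2}^2\,dt$, and together with weak lower semicontinuity of $\|\cdot\|_{\UU}$ this yields $J(\omega,\tilde u,f)\le\liminf_n J(\omega_n,\bar u^{\y_n,\omega_n},\y_n)$. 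On the other hand, for any fixed $v\in\UU$, Lemma~\ref{lem:continuity_f} applied to the constant sequence $(v,\y_n,\omega_n)$ gives $J(\omega_n,v,\y_n)\to J(\omega,v,f)$, and by optimality $J(\omega_n,\bar u^{\y_n,\omega_n},\y_n)\le J(\omega_n,v,\y_n)$, so $\limsup_n J(\omega_n,\bar u^{\y_n,\omega_n},\y_n)\le J(\omega,v,f)$. Hence $J(\omega,\tilde u,f)\le J(\omega,v,f)$ for every $v\in\UU$, and by uniqueness of the minimiser of $J(\omega,\cdot,f)$ we conclude $\tilde u=\bar u^{f,\omega}$; choosing $v=\tilde u$ also shows $\lim_n J(\omega_n,\bar u^{\y_n,\omega_n},\y_n)=J(\omega,\tilde u,f)$.

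To conclude, subtracting the (convergent) state contributions from this last identity gives $\gamma\|\bar u^{\y_n,\omega_n}\|_{\UU}^2\to\gamma\|\tilde u\|_{\UU}^2$, i.e.\ $\|\bar u^{\y_n,\omega_n}\|_{\UU}\to\|\tilde u\|_{\UU}$. Combined with $\bar u^{\y_n,\omega_n}\rightharpoonup\tilde u$ in the Hilbert space $\UU$, this forces $\|\bar u^{\y_n,\omega_n}-\tilde u\|_{\UU}^2=\|\bar u^{\y_n,\omega_n}\|_{\UU}^2-2\langle\bar u^{\y_n,\omega_n},\tilde u\rangle_{\UU}+\|\tilde u\|_{\UU}^2\to 0$, i.e.\ strong convergence in $L_2(0,T;L_2(\Omega))$. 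Since the extracted subsequence was arbitrary and the limit $\bar u^{f,\omega}$ is independent of it, a standard subsequence argument gives convergence of the whole sequence.

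The only genuinely delicate points are verifying the hypotheses of Lemma~\ref{lem:continuity_f} — in particular that the optimal controls are bounded in $\UU$, which is exactly where the coercivity in $\gamma$ and the admissibility of the zero control are used — and keeping in mind that in the comparison value $J(\omega_n,v,\y_n)$ the fixed control $v$ still enters the state equation through $\chi_{\omega_n}$, so that convergence of these values truly relies on the control-to-state continuity of Lemma~\ref{lem:continuity_f} and not merely on continuity with respect to $f$.
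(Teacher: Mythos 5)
Your proposal is correct and follows essentially the same route as the paper's proof: bound the optimal controls via coercivity and an admissible comparison control, pass to a weakly convergent subsequence, identify the limit as $\bar u^{f,\omega}$ using the state-continuity of Lemma~\ref{lem:continuity_f} together with uniqueness of the minimiser, and upgrade to strong convergence via norm convergence plus weak convergence in the Hilbert space. The only differences are cosmetic (you compare against the zero control and explicitly invoke convexity of $\mathcal U$ for weak closedness, while the paper compares against an arbitrary $u\in\UU$).
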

\begin{proof}
	Using estimate \eqref{eq:estimate_apriori} we see that for all $u\in \UU$ and $n\ge 0$, we have
	\ben\label{eq:u_f_n}
	\begin{split}
		 &\int_0^T\|y^{ \bar u^{\y_n,\omega_n} ,\y_n,\omega_n}(t)\|_{L_2(\DD)}^2 + \gamma \| \bar u^{\y_n,\omega_n} (t)\|_{L_2(\DD)}^2\; dt \\
		 & \le \int_0^T\|y^{ u ,\y_n,\omega_n}(t)\|_{L_2(\DD)}^2 + \gamma \|  u (t)\|_{L_2(\DD)}^2\; dt \\
		 &\le c(\| \chi_{\omega_n} u \|_{L_2(L_2)}^2 + \|f_n\|_{H^1}^2).
	\end{split}
	\een
	It follows that $(\bar u_n):= (\bar u^{\y_n,\omega_n})$ is bounded in $L_2(0,T;L_2(\DD))$ and hence there is an element $\bar u\in L_2(0,T;L_2(\DD))$ and a subsequence $(\bar u_{n_k})$,  $\bar u_{n_k} \rightharpoonup \bar u$ in $L_2(0,T;L_2(\DD))$ as $k\to \infty$. In addition this  subsequence satisfies $\liminf_{k\to \infty} \|\bar u_{n_k}\|_{L_2(0,T;L_2(\DD))} \ge \|\bar u\|_{L_2(0,T;L_2(\DD))}$. Since $\mathcal U$ is closed we also have $\bar u \in L_2(0,T;\mathcal U)$. Together with Lemma~\ref{lem:continuity_f} we therefore obtain from \eqref{eq:u_f_n} by taking the $\liminf$ on both sides,
	\ben
	\int_0^T\|y^{ \bar u ,\y,\omega}(t)\|_{L_2(\DD)}^2 + \gamma \|  \bar u (t)\|_{L_2(\DD)}^2\; dt \le \int_0^T\|y^{ u ,\y,\omega}(t)\|_{L_2(\DD)}^2 + \gamma \|  u (t)\|_{L_2(\DD)}^2\; dt
	\een
	for all $u\in \UU$. This shows that $\bar u=\bar u^{\y,\omega}$ and since $\bar u^{\y,\omega}$ is the unique minimiser of $J(\omega,\cdot ,y)$ the whole sequence $(\bar u_n)$ converges weakly to $\bar u^{\y,\omega}$. In addition it follows from the strong convergence $y^{ \bar u^{\y_n,\omega_n} ,\y_n,\omega} \to y^{ \bar u^{\y,\omega} ,\y,\omega}$ in $W(0,T)$ and estimate \eqref{eq:u_f_n} that the norm $\|\bar u^{\y_n,\omega_n}\|_{L_2(0,T;L_2(\DD))}$ converges to $\|\bar u^{\y,\omega}\|_{L_2(0,T;L_2(\DD))}$. As norm convergence together with weak convergence imply strong convergence, this shows that $\bar u^{\y_n,\omega_n}$ converges strongly to $\bar u^{\y,\omega}$ in $L_2(0,T;L_2(\DD))$ as was to be shown.
\end{proof}

We now prove that $\omega\mapsto \mathcal J_2(\omega)$ is well-defined on $\mathfrak Y(\DD)$.
\begin{lemma}\label{lem:maximiser}	
For every $\omega\in \mathfrak Y(\DD)$ there exists $ \y \in K$ satisfying $\|\y\|_{H^1_0(\Omega)}\le 1$ and
\ben
\mathcal J_2(\omega) = \mathcal J_1(\omega, \y).
\een
\end{lemma}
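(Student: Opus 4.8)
The plan is to invoke the direct method of the calculus of variations. Fix $\omega\in\mathfrak Y(\DD)$ and pick a maximising sequence $(f_n)$ in $K$ with $\|f_n\|_{H^1_0(\Omega)}\le 1$ and $\mathcal J_1(\omega,f_n)\to\mathcal J_2(\omega)$. Before anything else I would record that $\mathcal J_2(\omega)<\infty$: since $0\in\mathcal U$, the control $u\equiv 0$ is admissible in \eqref{eq:cost}, so by the a priori estimate \eqref{eq:estimate_apriori}
\[
\mathcal J_1(\omega,f)\le J(\omega,0,f)=\int_0^T\|y^{0,f,\omega}(t)\|_{L_2(\DD)}^2\,dt\le c\,\|f\|_{H^1}^2\le c
\]
for every admissible $f$; in particular the maximising sequence is meaningful and bounded below in value by $0$.

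Next I would use the reflexivity of $H^1_0(\Omega)$ to extract a subsequence, not relabelled, with $f_n\rightharpoonup f$ weakly in $H^1_0(\Omega)$. Since $K$ is weakly closed, $f\in K$, and weak lower semicontinuity of the $H^1_0(\Omega)$-norm gives $\|f\|_{H^1_0(\Omega)}\le\liminf_n\|f_n\|_{H^1_0(\Omega)}\le 1$. Hence $f$ is admissible in \eqref{eq:final_cost}, so $\mathcal J_1(\omega,f)\le\mathcal J_2(\omega)$, and it only remains to establish the reverse inequality. This reduces to showing $\mathcal J_1(\omega,f_n)\to\mathcal J_1(\omega,f)$ along the subsequence, since that forces $\mathcal J_1(\omega,f)=\lim_n\mathcal J_1(\omega,f_n)=\mathcal J_2(\omega)$.

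To get this convergence I would chain the two continuity lemmas, applied with the constant sequence $\omega_n\equiv\omega$. Writing $\bar u_n:=\bar u^{f_n,\omega}$ and $\bar u:=\bar u^{f,\omega}$, Lemma~\ref{lem:continuity_u} gives $\bar u_n\to\bar u$ strongly in $L_2(0,T;L_2(\DD))$; in particular $\bar u_n\rightharpoonup\bar u$, so Lemma~\ref{lem:continuity_f} applied to the triples $(\bar u_n,f_n,\omega)$ yields $y^{\bar u_n,f_n,\omega}\to y^{\bar u,f,\omega}$ strongly in $L_2(0,T;H^1_0(\Omega))$, hence strongly in $L_2(0,T;L_2(\DD))$. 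Since both the tracking term and the control term of $J$ are continuous with respect to strong convergence in $L_2(0,T;L_2(\DD))$, I may pass to the limit in
\[
\mathcal J_1(\omega,f_n)=\int_0^T\|y^{\bar u_n,f_n,\omega}(t)\|_{L_2(\DD)}^2+\gamma\|\bar u_n(t)\|_{L_2(\DD)}^2\,dt
\]
and obtain $\mathcal J_2(\omega)=\lim_n\mathcal J_1(\omega,f_n)=J(\omega,\bar u,f)=\mathcal J_1(\omega,f)$, which is exactly the assertion.

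The argument is essentially routine; the one place that genuinely needs the earlier work is the chaining step. One must pass through Lemma~\ref{lem:continuity_u} to upgrade the weak convergence of the optimal controls $\bar u_n$ to strong $L_2$-convergence, because Lemma~\ref{lem:continuity_f} is stated for a weakly convergent control sequence yet produces strong convergence of the states, and it is precisely this strong convergence of \emph{both} the states and the controls that is indispensable for passing to the limit in the quadratic cost $J$, which is only weakly lower semicontinuous rather than weakly continuous.
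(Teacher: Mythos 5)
Your proof is correct and follows essentially the same route as the paper: finiteness of $\mathcal J_2(\omega)$ via the zero control and \eqref{eq:estimate_apriori}, extraction of a weakly convergent subsequence of a maximising sequence using weak closedness of $K$ and weak lower semicontinuity of the $H^1_0$-norm, and passage to the limit in the cost by chaining Lemma~\ref{lem:continuity_u} (strong convergence of the optimal controls) with Lemma~\ref{lem:continuity_f} (strong convergence of the corresponding states). The only cosmetic difference is that you split the conclusion into two inequalities, while the paper computes the limit of the costs directly; the substance is identical.
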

\begin{proof}
Let $\omega\in \mathfrak Y(\DD)$ be fixed. In view of  $0\in \mathcal U$ and \eqref{eq:estimate_apriori} and since $K\subset H^1_0(\Omega)\hookrightarrow H^1_0(\DD)$ we obtain for all $\y \in H^1_0(\Omega)$ with $\|\y\|_{H^1_0(\Omega)}\le 1$,
\ben
\mathcal J_1(\omega,\y) = \min_{u\in \UU}J(\omega,u,\y) \le  \int_0^T\|y^{0,f,\omega}(t)\|_{L_2(\DD)}^2\;dt \le c \|\y\|_{H^1_0(\Omega)}^2 \le cr^2.
\een   	
Further we can express $\mathcal J_2$ as follows
\ben
\mathcal J_2(\omega) = \sup_{\substack{\y\in K\\\|\y\|_{H^1_0(\Omega)}\le 1}}\int_0^T\|y^{ \bar u^{\y,\omega} ,\y,\omega}(t)\|_{L_2(\DD)}^2 + \gamma \|  \bar u^{\y,\omega} (t)\|_{L_2(\DD)}^2\;dt.
\een
Let $(\y_n)\subset K$, $\|\y_n\|_{H^1_0(\Omega)} \le 1$ be a maximising sequence, that is,
\ben\label{eq:J2_min_sequence}
\mathcal J_2(\omega) = \lim_{n\to \infty}\int_0^T\|y^{ \bar u^{\omega,\y_n} ,\y_n,\omega}(t)\|_{L_2(\DD)}^2 + \gamma \| \bar u^{\omega,\y_n} (t)\|_{L_2(\DD)}^2\; dt.
\een
The sequence $(\y_n)$ is bounded in $K$ and therefore we find a subsequence $(\y_{n_k})$ converging weakly to an element $\y\in K$. Additionally,  the limit element satisfies  $\|\y\|_{H^1_0(\Omega)} \le \liminf_{k\to \infty}\|\y_{n_k}\|_{H^1_0(\Omega)}\le 1$  and hence $\|\y\|_{H^1_0(\Omega)}\le 1$. Since $(\y_{n_k})$ is also bounded in $H^1_0(\DD)$ we may assume that $(\y_{n_k})$ also converges weakly to  $\y\in H^1_0(\DD)$. Thanks to  Lemmas~\ref{lem:continuity_u} and \ref{lem:continuity_f} we obtain
\ben
\begin{split}
\mathcal J_2(\omega) &= \lim_{k\to \infty}\int_0^T\|y^{ \bar u^{\y_{n_k},\omega} ,\y_{n_k},\omega}(t)\|_{L_2(\DD)}^2 + \gamma \|  \bar u^{\y_{n_k},\omega} (t)\|_{L_2(\DD)}^2\; dt \\
& = \int_0^T\|y^{ \bar u^{\y,\omega} ,\y,\omega}(t)\|_{L_2(\DD)}^2 + \gamma \| \bar u^{\y,\omega}(t)\|_{L_2(\DD)}^2\; dt.
\end{split}
\een
\end{proof}

\begin{remark}
	In view of Lemma~\ref{lem:maximiser} we write from now on
	$ \mathcal J_2(\omega) =$\newline $\max_{\substack{\y\in K,\\\|\y\|_{H^1_0(\Omega)}\le 1}}\mathcal J_1(\omega,f).$
\end{remark}

\section{Shape derivative}\label{sec:four}
In this section we prove the directional differentiability of
$\mathcal J_2$ at arbitrary measurable sets. We employ the averaged adjoint approach \cite{MR3374631} which is tailored to the
derivation of directional derivatives of PDE constrained shape functions. Moreover this approach  allows us later on to also compute the topological derivative of  $\mathcal J_1$ and $\mathcal J_2$ without performing asymptotic analysis which can otherwise  be quite involved \cite{NOSO13}. 

Of course, there are notable alternative approaches, most prominent the material derivative approach,  to prove directional differentiability of shape functions, see e.g. \cite{MR2434064,MR948649}. For an overview of available methods the reader may consult \cite{MR3467385}.

\subsection{Shape derivative}
Given a vector field $X\in \ac C^{0,1}(\overbar\DD ,\R^d)$, we denote by $T_t^X$ the perturbation of the identity
$
T_t^X(x) := x + t X(x)
$
which is bi-Lipschitz for all $t\in [0,\tau_X]$, where
$
\tau_X := 1/(2\|X\|_{C^{0,1}}).
$
We omit the index $X$ and write $T_t$ insteand of $T_t^X$ whenever no confusion is possible. A mapping $J:\mathfrak Y(\DD) \rightarrow \R $  is called \emph{shape function}.

\begin{definition}\label{def1}
	The directional derivative of $J$ at $\omega\in \mathfrak Y(\DD)$ in direction $X\in \ac C^{0,1}(\overbar\DD , \R^d)$ is defined by
	\ben
	DJ(\omega)(X):= \lim_{t \searrow 0}\frac{J(T_t(\omega))-J(\omega)}{t} .
	\een
	We say that $J$ is
	\begin{itemize}
		\item[(i)] directionally differentiable at $
		\omega$ (in $\ac C^{0,1}(\overbar\DD , \R^d)$), if 	 $DJ(\omega)(X)$
		exists for all \\ $X\in C^{0,1}(\overbar\DD , \R^d)$,
		\item[(ii)]  \textit{differentiable} at $\omega$
		(in $\ac C^{0,1}(\overbar\DD ,\R^d)$), if  $DJ(\omega)(X)$ exists for all \\ $X \in \ac C^{0,1}(\overbar\DD ,\R^d)$ and
		$ X     \mapsto DJ(\omega)(X) $
		is linear and continuous.
	\end{itemize}
\end{definition}

The following properties will frequently be used.

\begin{lemma}\label{lem:phit}
	Let $\DD \subseteq \R^d$ be open and bounded and pick a vector field  $X\in \ac C^{0,1}(\overbar\DD, \R^d)$. (Note that $T_t(\Omega)=\Omega$ for all $t$.)
	\begin{itemize}
		\item[(i)] We  have as $t\rightarrow 0^+$,
		\begin{align*}
			\frac{ \partial T_t - I}{t} \rightarrow  \partial X \quad
			&\text{ and }\quad   \frac{
				\partial T_t^{-1} - I}{t} \rightarrow  - \partial X &&
			\text{ strongly in } L_\infty(\overbar\DD , \R^{d\times d})\\
			\frac{ \det(\partial T_t) - 1}{t} \rightarrow & \divv(X) && \text{ strongly in } L_\infty(\overbar\DD ).
		\end{align*}
		\item[(ii)]  	For all
		$\varphi \in L_2(\Omega)$, we have as $t\rightarrow 0^+$,
		\begin{align}\label{eq:Lp}
			\varphi\circ T_t  \rightarrow & \varphi
			&& \text{ strongly in }  L_2(\Omega).
		\end{align}
		\item[(iii)]

		Let $(\varphi_n)$ be a sequence in $H^1(\Omega)$ that converges weakly to $\varphi \in H^1(\Omega)$. Let $(t_n)$ a null-sequence. Then we have as $n \to \infty$,
		\begin{align}\label{eq:Wkp}
			\frac{\varphi_n\circ T_{t_n}  - \varphi_n}{t_n} \rightharpoonup & \nabla \varphi \cdot X
			&& \text{ weakly in }  L_2(\Omega).
		\end{align}

	\end{itemize}
\end{lemma}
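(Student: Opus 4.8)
\textbf{Proof plan for Lemma~\ref{lem:phit}.}

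The plan is to treat the three items essentially independently, since (i) is a statement about the deformation alone, (ii) is a density/continuity statement, and (iii) is a difference-quotient estimate combined with compactness. First I would establish (i). Since $X\in\ac C^{0,1}(\overbar\DD,\R^d)$ is Lipschitz, $\partial X$ exists a.e.\ and lies in $L_\infty(\overbar\DD,\R^{d\times d})$, and by the explicit formula $T_t(x)=x+tX(x)$ we have $\partial T_t=I+t\,\partial X$ exactly, so $(\partial T_t-I)/t=\partial X$ for every $t>0$ and the first limit is in fact an equality. For $T_t^{-1}$ I would use that $T_t$ is bi-Lipschitz for $t\in[0,\tau_X]$ and write $\partial T_t^{-1}=(\partial T_t)^{-1}\circ T_t^{-1}=(I+t\,\partial X)^{-1}$ (using the chain rule for the a.e.\ differentiable bi-Lipschitz map, or a Neumann series $(I+t\partial X)^{-1}=I-t\partial X+t^2(\partial X)^2-\cdots$, valid in $L_\infty$ once $t\|\partial X\|_\infty<1$, i.e.\ $t<\tau_X$); this gives $(\partial T_t^{-1}-I)/t\to-\partial X$ uniformly. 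The determinant statement follows from expanding $\det(I+t\,\partial X)=1+t\,\tr(\partial X)+O(t^2)=1+t\,\divv X+O(t^2)$ with the $O(t^2)$ term bounded uniformly in $L_\infty$ by a polynomial in $\|\partial X\|_\infty$.

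For (ii), I would argue by density: the claim $\varphi\circ T_t\to\varphi$ in $L_2(\Omega)$ is classical for $\varphi\in C_c^\infty(\Omega)$ (uniform continuity plus $T_t\to\id$ uniformly and $\det\partial T_t\to1$ to control the change of variables), and it extends to all of $L_2(\Omega)$ because the operators $\varphi\mapsto\varphi\circ T_t$ are uniformly bounded on $L_2(\Omega)$: by the change of variables $x=T_t^{-1}(z)$,
\begin{equation*}
\|\varphi\circ T_t\|_{L_2(\Omega)}^2=\int_\Omega|\varphi(z)|^2\,|\det\partial T_t^{-1}(z)|\,dz\le C\,\|\varphi\|_{L_2(\Omega)}^2
\end{equation*}
with $C$ independent of $t\in[0,\tau_X]$ by part (i). A standard $3\varepsilon$-argument (approximate $\varphi$ in $L_2$ by a smooth compactly supported function) then yields the conclusion.

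For (iii), which I expect to be the main obstacle, the target is the weak limit of the difference quotient $g_n:=(\varphi_n\circ T_{t_n}-\varphi_n)/t_n$. The natural approach is: first prove it for smooth $\varphi$ and then handle the weakly convergent perturbation. For fixed smooth $\psi\in C_c^\infty(\Omega)$, the fundamental theorem of calculus along the segment $s\mapsto x+st_nX(x)$ gives
\begin{equation*}
\frac{\psi(T_{t_n}(x))-\psi(x)}{t_n}=\int_0^1\nabla\psi(x+st_nX(x))\cdot X(x)\,ds\longrightarrow\nabla\psi(x)\cdot X(x)
\end{equation*}
uniformly, hence strongly in $L_2(\Omega)$. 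For the general case one first notes $(g_n)$ is bounded in $L_2(\Omega)$: writing $g_n=\int_0^1(\nabla\varphi_n\circ T_{st_n})\cdot X\,ds$ (valid first for $\varphi_n\in H^1$ by approximation, using that both sides are continuous in $\varphi_n$ for the $H^1$-norm), the change of variables $x=T_{st_n}^{-1}(z)$ together with the uniform Jacobian bounds from (i) and $\|\nabla\varphi_n\|_{L_2}$ bounded (weak convergence) gives $\|g_n\|_{L_2}\le C\sup_n\|\varphi_n\|_{H^1}$. So $(g_n)$ has a weakly convergent subsequence; to identify the limit as $\nabla\varphi\cdot X$ I would test against $\phi\in C_c^\infty(\Omega)$ and transfer the difference quotient onto $\phi$:
\begin{align*}
\int_\Omega g_n\,\phi\,dx
&=\frac1{t_n}\int_\Omega(\varphi_n\circ T_{t_n})\phi\,dx-\frac1{t_n}\int_\Omega\varphi_n\phi\,dx\\
&=\frac1{t_n}\int_\Omega\varphi_n\,\big[(\phi\circ T_{t_n}^{-1})\,\det\partial T_{t_n}^{-1}-\phi\big]\,dx,
\end{align*}
and the bracketed quantity converges strongly in $L_2(\Omega)$, by the smooth case applied to $T_{t_n}^{-1}$ (whose generating field is $-X$ to first order) together with the determinant expansion from (i), to $-\nabla\phi\cdot X-\phi\,\divv X=-\divv(\phi X)$. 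Since $\varphi_n\rightharpoonup\varphi$ weakly in $H^1(\Omega)$, hence in particular in $L_2(\Omega)$, the right-hand side tends to $-\int_\Omega\varphi\,\divv(\phi X)\,dx=\int_\Omega\nabla\varphi\cdot X\,\phi\,dx$ (integration by parts, $\phi$ compactly supported). As the weak limit is thereby uniquely determined, the whole sequence converges, proving \eqref{eq:Wkp}. The delicate points to be careful about are the justification of the integral representation $g_n=\int_0^1(\nabla\varphi_n\circ T_{st_n})\cdot X\,ds$ for $H^1$ functions (done by mollification and passing to the limit, using the $L_\infty$ bounds on $\partial T_t$ and its inverse) and the uniform-in-$t$ Jacobian bounds, both of which are supplied by part (i).
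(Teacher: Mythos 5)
Your items (i) and (ii) are handled essentially as the paper does (which declares (i) obvious and cites the literature for (ii)); your Neumann series, determinant expansion, and the density plus uniform-boundedness argument are the standard fillings and are correct. For item (iii), the substantial part, you take a genuinely different and correct route. The paper approximates $\varphi_n$ by smooth $\tilde\varphi_n$ with $\|\tilde\varphi_n-\varphi_n\|_{H^1}\le\eps_n$, writes the difference quotient of $\tilde\varphi_n$ via the fundamental theorem of calculus as the averaged gradient $q_n=\int_0^1\nabla\tilde\varphi_n(\cdot+t_n s X)\cdot X\,ds$, proves $q_n\rightharpoonup\nabla\varphi\cdot X$ by testing, interchanging integrals and changing variables so that the transformation lands on the test function, and then controls the error terms by the diagonal choice $\eps_n=\min\{t_n^2,\eps\}$, which is the delicate point of that proof. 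You instead transfer the difference quotient onto the test function at once, $\int_\Omega g_n\phi\,dx=t_n^{-1}\int_\Omega\varphi_n\big[(\phi\circ T_{t_n}^{-1})\det\partial T_{t_n}^{-1}-\phi\big]\,dx$, identify the strong $L_2$ limit of the bracket as $-\divv(\phi X)$, pair it with the weak $L_2$ convergence of $(\varphi_n)$, and combine with a uniform $L_2$ bound on the difference quotients obtained from the FTC representation for $H^1$ functions (justified by mollification and the uniform Jacobian bounds). This buys a cleaner identification step: the derivative never has to act on $\varphi_n$, and no coupling between the smoothing error and $t_n$ is needed; the price is justifying the FTC identity for $H^1$ compositions with the bi-Lipschitz flow and the strong $L_2$ convergence of the transported test function. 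On the latter, one small care point: the term $\phi\,(\det\partial T_{t_n}^{-1}-1)/t_n$ involves $\divv X$ composed with $T_{t_n}^{-1}$, which converges to $\divv X$ only in $L_p$ for $p<\infty$ (not in $L_\infty$, since $\divv X$ is merely bounded), so you should invoke the analogue of item (ii) for $T_t^{-1}$ there rather than item (i) alone; this is easily supplied and does not affect your conclusion.
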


\begin{proof}
	Item (i) is obvious. The convergence result \eqref{eq:Lp} is proved in \cite[Lem. 2.1, p.527]{DEZO11} and \eqref{eq:Wkp} can be proved in a similar fashion.

	Item (iii) is less obvious and we give a proof. For every $\eps >0$ and $\psi\in H^1(\Omega)$, there is $N>0$, such that $|(\varphi_n-\varphi,\psi)_{H^1}|\le \eps$ for all $n\ge N_\epsilon$.  By density we find for every $n$ and every null-sequence $(\epsilon_n)$,  $\eps_n >0$ an element $\tilde \varphi_n \in C^1(\overbar\Omega)$, such that
	\ben
	\|\tilde \varphi_n - \varphi_n\|_{H^1} \le \eps_n.
	\een
	It is clear that $\tilde \varphi_n \rightharpoonup \varphi$  weakly in $H^1(\Omega)$ as $n\to \infty$.
	We now write
	\ben\label{eq:varphi_n}
	\begin{split}
		\frac{\varphi_n \circ T_{t_n} - \varphi_n}{t_n} - \nabla \varphi_n \cdot X  = & \frac{(\varphi_n - \tilde \varphi_n) \circ T_{t_n} - (\varphi_n-\tilde \varphi_n)}{t_n} - \nabla (\varphi_n-\tilde \varphi_n) \cdot X\\
		& + \frac{\tilde\varphi_n \circ T_{t_n} - \tilde\varphi_n}{t_n} - \nabla \tilde\varphi_n \cdot X.
	\end{split}
	\een
	Let $x\in \Omega$. Applying the fundamental theorem of calculus to $s\mapsto \tilde \varphi_n(T_{s}(x))$ on $[0,1]$ gives
	\ben
	\frac{\tilde\varphi_n(T_{t_n}(x)) - \tilde\varphi_n(x)}{t_n} =  \int_0^1 \nabla \tilde \varphi_n(x+t_n s X(x)) \cdot X(x)\;ds.
	\een
	We now show that the function $q_n(x) := \int_0^1 \nabla \tilde \varphi_n(x+t_n s X(x)) \cdot X(x)$ converges weakly to $\nabla \varphi\cdot X$ in $L_2(\Omega)$. For this purpose we consider for $\psi\in L_2(\Omega)$,
	\ben
	\int_\Omega q_n \psi \; dx = \int_\Omega \int_0^1 \nabla \tilde \varphi_n(x+t_n s X(x)) \cdot X(x) \psi(x)\;ds \; dx.
	\een
	Interchanging the order of integration and invoking a change of variables (recall $T_t(\Omega)=\Omega$), we get
	\ben
	\int_\Omega q_n \psi \; dx  = \int_0^1 \underbrace{\int_\Omega \det(\partial T_{st_n}^{-1})  \nabla \tilde \varphi_n \cdot \left((X\psi)\circ T_{st_n}^{-1}\right) \; dx}_{:=\eta(t_n, s)} \;ds.
	\een
	Owing to item (ii) and noting that $X\circ T_t^{-1} \to X$ in $L_\infty(\Omega)$ as $t\to 0$, we also have for $s\in [0,1]$ fixed,
	\ben
	\det(\partial T_{st_n}^{-1})   (X\psi)\circ T_{st_n}^{-1}  \to X\psi \quad \text{ in } L_2(\Omega, \R^2) \quad\text{ as } n \to \infty.
	\een
	As a result using the weak convergence of $(\tilde \varphi_n)$ in $H^1(\Omega)$,  we get for $s\in[0,1]$,
	\ben
	\eta(t_n, s)   \to \int_\Omega \nabla \varphi \cdot X \psi\; dx\quad \text{ as } n\to \infty.
	\een
	It is also readily checked using H\"older's inequality that $|\eta(t_n, s)| \le c \|\nabla \tilde \varphi_n\|_{L_2}\|\psi\|_{L_2}$ for a constant $c>0$ independent of $s\in [0,1]$. As a result we may apply Lebegue's dominated convergence theorem to obtain
	\ben
\int_{\Omega} q_n \psi  \; dx = 	\int_0^1 \eta(t_n, s) \; ds \to \int_0^1 \eta(0, s)\; ds =  \int_\Omega \nabla \varphi \cdot X\; dx \quad \text{ as } n\to \infty.
	\een
	This proves that $q_n$ converges weakly to $\nabla \varphi\cdot X$.

	Finally testing \eqref{eq:varphi_n} with $\psi$, integrating over $\Omega$ and estimating gives
	\ben\label{eq:varphi_n2}
	\begin{split}
		\bigg| & \bigg(\frac{\varphi_n \circ T_{t_n}  - \varphi_n}{t_n} - \nabla \varphi_n \cdot X,\psi\bigg)_{L_2}\bigg|  \\
		& \le c\|\psi\|_{L_2} (\eps_n/t_n + \eps_n)
		+ \bigg|\bigg( \frac{\tilde\varphi_n \circ T_{t_n} - \tilde\varphi_n}{t_n} - \nabla \tilde\varphi_n \cdot X, \psi\bigg)_{L_2}\bigg|
	\end{split}
	\een
	with a constant $c>0$ only depending on $X$. Now we choose $\tilde N_\epsilon \ge 1$ so large that
	\ben
	\bigg|\bigg( \frac{\tilde\varphi_n \circ T_{t_n} - \tilde\varphi_n}{t_n} - \nabla \varphi \cdot X, \psi\bigg)_{L_2}\bigg| \le \epsilon\quad  \text{ for all } n\ge \tilde N_\epsilon.
	\een
	Then
	\ben\label{eq:final_estimate}
	\begin{split}
	  & \bigg|\bigg( \frac{\tilde\varphi_n \circ T_{t_n} - \tilde\varphi_n}{t_n} - \nabla \tilde\varphi_n \cdot X, \psi\bigg)_{L_2}\bigg| \\
      & \le \epsilon + |(\nabla(\tilde \varphi_n - \varphi_n)\cdot X,\psi)_{L_2}| + |(\nabla(\varphi_n - \varphi)\cdot X, \psi)_{L_2}|\\
      & \le \epsilon + \epsilon_n + \epsilon \quad \text{ for all } n\ge \max\{N_\epsilon, \tilde N_\epsilon \}.
	\end{split}
	\een
	Choosing $\epsilon_n := \min\{ t_n^2,\epsilon \}$ and combining the previous estimate with \eqref{eq:varphi_n2} shows the right hand side of \eqref{eq:final_estimate} can be bounded by $3\epsilon$. Since $\epsilon >0$ was arbitrary we see that \eqref{eq:Wkp} holds.
\end{proof}

\subsection{First main result: the directional derivative of $\mathcal J_2$}
Given $\omega\in \mathfrak Y(\DD)$ and $r>0$, we define the set of maximisers of  $\mathcal J_1(\omega,\cdot)$ by
\ben\label{eq:frak_X2_r}
\mathfrak X_2(\omega) := \{\bar f\in K:\; \sup_{\substack{f\in K, \\ \|f\|_{H^1_0(\Omega)}\le 1 }}\mathcal J_1(\omega,f) = \mathcal J_1(\omega,\bar f)\}.
\een
The set $\mathfrak X_2(\omega)$ is nonempty as shown in Lemma~\ref{lem:maximiser}. Before stating our first main result we make the following assumption.
\begin{assumption}\label{ass:cal_U}
	For every $X\in \ac C^{0,1}(\overbar \DD, \R^d)$ and $t\in [0,\tau_X]$ we have
	\ben
	u\in \mathcal U \quad \Longleftrightarrow \quad u\circ T_t \in \mathcal U.
	\een
\end{assumption}

\begin{remark}
	Assumption~\ref{ass:cal_U} is satisfied for $\mathcal U$ equal to  ${\tiny }L_2(\DD)$ or $\R$.
\end{remark}

Under the Assumption~\ref{ass:cal_U} we have the following theorem, where
we set $\bar y^{\y,\omega} := y^{\bar u^{\omega,\y},\y,\omega}$ and $\bar p^{\y,\omega} := p^{\bar u^{\omega,\y},\y,\omega}$ for $\omega\in \mathfrak Y(\DD)$ and $\y\in K$.
Furthermore we define for $A\in \R^{d\times d}, B\in \R^{d\times d}, a, b, c\in \R^{d}$
\begin{equation*}
A:B= \sum_{i,j=1}^d a_{ij}b_{ij}, \quad (a\otimes b)c := (b\cdot c) a,
\end{equation*}
where $a_{ij},b_{ij}$ are the entries of the matrices $A,B$, respectively.

\begin{theorem}\label{thm:shape_derivative_J2}
	\begin{itemize}
		\item[(a)]
		The directional derivative of $\mathcal J_2(\cdot)$ at $\omega$ in direction $X\in \ac C^{0,1}(\overbar \DD,\R^d)$ is given by
		\ben\label{eq:shape_main_J2}
		D\mathcal J_2(\omega)(X) = \max_{\y\in \mathfrak X_2(\omega)}\int_{\Omega_T} \Sb_1(\bar y^{\y,\omega} ,\bar p^{\y,\omega},\bar u^{\y,\omega}):\partial X + \Sb_0(\y) \cdot X \; dx\;dt,
		\een
		where the functions $\Sb_1(\y) := \Sb_1(\bar y^{\y,\omega} ,\bar p^{\y,\omega},\bar u^{\y,\omega})$ and $\Sb_0(\y)$ are given by
		\ben\label{eq:S0_S1}
		\begin{split}
			\Sb_1(\y)  =& I( |\bar y^{\y,\omega}|^2 + \gamma  |\bar u^{\y,\omega}|^2 -  \bar y^{\y,\omega} \partial_t \bar p^{\y,\omega}  + \nabla\bar y^{\y,\omega}\cdot \nabla \bar p^{\y,\omega}  - \chi_\omega \bar u^{\y,\omega} \bar p^{\y,\omega})\\
			&   - \nabla \bar y^{\y,\omega}\otimes \nabla \bar p^{\y,\omega} - \nabla \bar p^{\y,\omega}\otimes \nabla \bar y^{\y,\omega}, \\
			\Sb_0(\y)  =& - \frac1T\nabla \y \,\bar p^{\y,\omega}
		\end{split}
		\een
		and the adjoint $\bar p^{\y,\omega}$ satisfies
		\begin{alignat}{2}
			-\partial_t \bar p^{\y,\omega}  - \Delta \bar p^{\y,\omega} =  -2 \bar y^{\y,\omega} &  & &\quad \text{ in } \DD \times (0,T], \\
			\bar p^{\y,\omega}=0 &  & &\quad \text{ on }\partial \DD\times (0,T],\\
			\bar p^{\y,\omega}(T)=0 &  & &\quad \text{ in } \DD.
		\end{alignat}
		\item[(b)] The directional derivative of $\mathcal J_1(\cdot,\y)$ at $\omega$ in direction $X\in \ac C^{0,1}(\overbar \DD,\R^d)$ is given by
		\ben\label{eq:shape_main_J1}
		D\mathcal J_1(\omega,\y)(X) =\int_{\Omega_T} \Sb_1(\y):\partial X + \Sb_0(\y)\cdot X \; dx\;dt,
		\een
		where $\Sb_0(\y)$ and $\Sb_1(\y)$ are defined by  \eqref{eq:S0_S1}. 	
	\end{itemize}
\end{theorem}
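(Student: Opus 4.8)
The plan is to reduce everything to a single-shape calculation for $\mathcal J_1(\cdot,f)$ and then use a max-type differentiation argument (Danskin-type) to pass to $\mathcal J_2$. I would begin by proving part (b) for fixed $f$, since part (a) is then an abstract consequence. The key tool is the averaged adjoint method: first transport the state equation \eqref{eq:opt_1} back to the fixed domain $\Omega$ via the change of variables $x \mapsto T_t(x)$, producing a $t$-parametrised weak formulation on $\Omega$ with coefficients $A_t := \det(\partial T_t)\,\partial T_t^{-1}\partial T_t^{-\top}$ and Jacobian $\det(\partial T_t)$; note that $\chi_{T_t(\omega)}\circ T_t = \chi_\omega$, which is exactly why characteristic functions behave well here, and that $f\circ T_t$ replaces $f$ in the initial condition (this is the source of the $\Sb_0$ term). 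Then introduce the perturbed Lagrangian
\[
G(t,\varphi,\psi) := \int_{\Omega_T} |\varphi|^2\det(\partial T_t) + \gamma|\bar u^{f,\omega}\circ T_t|^2 \det(\partial T_t)\;dx\,dt + (\text{state equation residual tested with }\psi),
\]
where the control is frozen at the pulled-back optimal control (this is legitimate because, by the optimality of $\bar u$ and the envelope/adjoint structure, the derivative of the control-dependence drops out — this is precisely what the averaged adjoint equation encodes). One then verifies the hypotheses of the averaged adjoint theorem \cite{MR3374631}: existence and uniqueness of states $y_t$ and of the averaged adjoint $p_t$, and the two differentiability/convergence conditions on $t\mapsto G(t,y_t,\psi)$ and on the averaged adjoint states $p_t \to \bar p^{f,\omega}$.

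The main technical obstacle will be verifying those convergence hypotheses of the averaged adjoint framework, specifically that $y_t \to \bar y^{f,\omega}$ strongly in $L_2(0,T;H^1_0(\Omega))$ and $p_t \to \bar p^{f,\omega}$ in the appropriate sense, with the right rates, so that $\partial_t G(t,y_t,p_t)\big|_{t=0^+}$ exists and equals $\partial_t G(0,\bar y,\bar p)$. For the state convergence I would lean on Lemma~\ref{lem:continuity_f} together with the a priori estimate \eqref{eq:estimate_apriori} applied to the transported equation (whose coefficients converge in $L_\infty$ by Lemma~\ref{lem:phit}(i)); the term involving $f\circ T_t$ as initial datum needs Lemma~\ref{lem:phit}(ii) for strong $L_2$-convergence, and the difference quotient $(f\circ T_t - f)/t \rightharpoonup \nabla f\cdot X$ only weakly — hence the $\frac1T$ factor and the precise form of $\Sb_0$ must come out of carefully tracking how this initial-condition perturbation propagates through the adjoint pairing (integration by parts in $t$ moves the $\partial_t$ onto $\bar p$ and evaluates $\bar p(0)$ against $\nabla f\cdot X$, with the $1/T$ presumably an artifact of how the initial condition is incorporated as a term spread over $\Omega_T$, or of a rescaling). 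Lemma~\ref{lem:phit}(iii) is tailored exactly for handling the weak-convergence difference quotients that appear when differentiating the transported bilinear form.

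Once $D\mathcal J_1(\omega,f)(X)$ is established in the form \eqref{eq:shape_main_J1}, with the integrand \eqref{eq:S0_S1} obtained by collecting: the $|\bar y|^2 + \gamma|\bar u|^2$ terms times $\divv X$ (from $\partial_t\det(\partial T_t)|_0 = \divv X$, giving the $I(\cdot)$ trace part), the terms from differentiating $A_t$ in the Dirichlet form (giving $-\nabla\bar y\otimes\nabla\bar p - \nabla\bar p\otimes\nabla\bar y$ plus $\nabla\bar y\cdot\nabla\bar p\,I$), the $-\bar y\,\partial_t\bar p\,I$ and $-\chi_\omega\bar u\,\bar p\,I$ terms from the Jacobian on the remaining pieces of the weak state/adjoint pairing, and $\Sb_0 = -\frac1T\nabla f\,\bar p$ from the initial datum. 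For part (a), I would invoke a Danskin/Lax-type theorem for the directional derivative of a supremum of directionally differentiable functions: one needs (i) the map $(\omega\text{-perturbation}, f)\mapsto \mathcal J_1(T_t(\omega),f)$ jointly continuous and concave-enough in $f$ (or at least that the sup is attained with the maximiser set $\mathfrak X_2(\omega)$ compact in the weak topology, which follows from Lemma~\ref{lem:maximiser} and Lemmas~\ref{lem:continuity_f}--\ref{lem:continuity_u}), and (ii) a uniform-in-$f$ version of the difference quotient estimate so that the limit and the max interchange, yielding $D\mathcal J_2(\omega)(X) = \max_{f\in\mathfrak X_2(\omega)} D\mathcal J_1(\omega,f)(X)$. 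The uniformity in $f$ over the bounded set $\{\|f\|_{H^1_0}\le1\}\cap K$ is the second place where care is needed, but the a priori bound \eqref{eq:estimate_apriori} being independent of $f$ makes this routine rather than deep.
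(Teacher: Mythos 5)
Your overall architecture is, in outline, the paper's own: pull the perturbed problem back to the fixed domain, build a parametrised Lagrangian that carries the initial-condition term (so that the weak limit of $(\y\circ T_\tau-\y)/\tau$ produces $\Sb_0$, the $1/T$ being indeed only an artifact of writing a time-independent quantity as an integral over $\Omega_T$), use the averaged adjoint, and pass to the supremum by a Danskin-type argument; your bookkeeping of the terms making up $\Sb_1$ is also correct. The first genuine gap is in how you dispose of the inner minimisation over $u$. Freezing the control at the (pulled-back) unperturbed minimiser only yields $\mathcal J_1(T_\tau(\omega),\y)\le J(T_\tau(\omega),\bar u^{\y,\omega}\circ T_\tau^{-1},\y)$, i.e.\ an upper bound for the upper difference quotient of the value function, and the averaged adjoint equation does \emph{not} encode the disappearance of the control sensitivity: it eliminates the state sensitivity only, since $u$ enters the Lagrangian directly and not merely through $y$. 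Because $\mathcal U$ is only closed and convex, optimality of $\bar u$ is the variational inequality \eqref{eq:opt_3}, so the envelope argument must be run as a two-sided sandwich: the matching lower bound uses the \emph{perturbed} optimal controls $\bar u^{\y,\omega_\tau}$, and one needs their strong $L_2(0,T;L_2(\DD))$-convergence to $\bar u^{\y,\omega}$ (the paper's Lemma~\ref{lem:convergence_un_shape}, resting on Lemma~\ref{lem:continuity_u}) together with stability of the averaged adjoints under weakly converging controls (Lemma~\ref{lem:averaged_shape}). This is exactly hypothesis (A2) of the paper's max--min lemma, i.e.\ both limits \eqref{eq:max_min_a1} and \eqref{eq:max_min_a2}; your plan computes only the frozen-control one and has no substitute for the other.

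The second gap is the interchange of limit and supremum in part (a). Weak compactness of $\mathfrak X_2(\omega)$ plus differentiability for each fixed $\y$ is not enough: the Danskin theorem actually needed (Lemma~\ref{lem:danskin2}, resp.\ (A3) of Lemma~\ref{lem:maxmin}) concerns maximisers of the \emph{perturbed} problems. One must show that any sequence $\y_n\in\mathfrak X_2(T_{\tau_n}(\omega))$ has a subsequence converging weakly to an element of $\mathfrak X_2(\omega)$ (the paper's Lemma~\ref{lem:convergence_un_shape2}) and that the difference quotients evaluated along these varying $\y_n$ converge to the derivative at the limit; this is where Lemma~\ref{lem:phit}(iii) with a genuinely varying sequence $\varphi_n=\y_n$ is needed (not for the bilinear form, as you suggest). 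Your alternative -- uniform-in-$\y$ convergence of the difference quotients over the unit ball of $K$ -- would suffice, but it is not ``routine'' from \eqref{eq:estimate_apriori}: that estimate gives uniform boundedness of states and controls, not a uniform rate for difference quotients, and proving such uniformity would require essentially the same continuity machinery (Lemmas~\ref{lem:continuity_f}, \ref{lem:continuity_u}, \ref{lem:convergence_un_shape}, \ref{lem:averaged_shape}) that the sequential argument uses. Note also that the paper's logical order is the reverse of yours: it proves (a) directly via the combined max--min lemma and obtains (b) by taking $K$ a singleton; your order, (b) first, is viable, but only after the two missing ingredients above are supplied.
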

\begin{proof}[Proof of item (b)]
	We notice that for $r>0$ we have
	\ben
	\max_{\substack{\y\in K,\\\|\y\|_{{H^1_0(\Omega)}}\le r}}\mathcal J_1(\omega,f) = 	r^2\max_{\substack{\y\in \frac1r K,\\\|\y\|_{{H^1_0(\Omega)}}\le 1}}\mathcal J_1(\omega,f).
	\een
	Therefore we may assume that $\bar \y\in K$ with $\|\bar \y\|_{H^1_0(\Omega)}\le 1$.
	Setting $K:= \{\bar \y \}$, we have for all  $\omega\in \mathfrak Y(\DD)$,
	\ben
	\mathcal J_2(\omega)  = \max_{\substack{\y\in K,\\\|\y\|_{{H^1_0(\Omega)}}\le 1}}\mathcal J_1(\omega,f) = \mathcal J_1(\omega,\bar f)
	\een
	and hence the result follows from item (a) since $\mathfrak X_2(\omega)=\{\bar f\}$ is a singleton. The proof of part (a) will be given in the following subsections.
\end{proof}
We pause here to comment on the regularity requirements imposed on $\y$. As can be seen from the volume expression \eqref{eq:shape_main_J2} we can extend $D\mathcal J_1(\omega, \y)$ to initial conditions $\y $ in $L_2(\DD)$. In fact, the only term that requires weakly differentiable initial conditions is the one involving $\Sb_0$ and it can be rewritten as follows for a.e. $t\in [0,T]$,
\ben
\begin{split}
\int_{\DD} \Sb_0(t)\cdot X\; dx & = - \frac1T \int_{\DD}
\nabla \y \cdot X \bar p^{\y, \omega}(t) \; dx \\
& = \frac1T \int_{\DD} \divv(X) \y \bar p^{\y, \omega}(t) + \y \nabla \bar p^{\y, \omega}(t) \cdot X\; dx,
\end{split}
\een
where we used that $\bar p^{\y, \omega}(t)=0$ on $\partial \DD$. This shows that the shape derivative $D\mathcal J_1(\omega, \y)$ can be extended to initial conditions $\y \in L_2(\DD)$. However, it is not possible to obtain the shape derivative for $\y \in L_2(\DD)$ in general. This will become clear in the proof of Theorem~\ref{thm:shape_derivative_J2}.

The next corollary shows that under certain smoothness assumptions on $\omega$ we can write the integrals \eqref{eq:shape_main_J2} and \eqref{eq:shape_main_J1} as integrals over $\partial \omega$.

\begin{corollary}\label{cor:shape_derivative_J1}
	Let $\y\in K$ and  $X\in \ac C^{0,1}(\overbar \DD,\R^d)$ be given. Assume that $\omega\Subset \Omega$ and $\Omega$ are  $C^2$ domains. Moreover, suppose that either $\mathcal U = L_2(\DD)$ or $\mathcal U = \R$.
	\begin{itemize}
		\item[(a)]  Given $\y \in \mathfrak X_2(\omega)$ define  $\hat{\Sb_1}(\y) := \int_{0}^{T} \Sb_1(\y)(s)\;ds$ and \\ $\hat {\Sb_0}(\y) := \int_{0}^{T} \Sb_0(\y)(s)\;ds$. Then we have
		\ben\label{eq:regularity_S0_S1}
		\begin{split}
			\hat {\Sb_1}(\y)|_{\omega}\in W^1_1(\omega,\R^{d\times d}), & \: \hat {\Sb_1}(\y)|_{\DD\setminus \overbar\omega}\in W^1_1(\DD\setminus \overbar\omega,\R^{d\times d}),
			\hat {\Sb_0}(\y)|_{\omega}\in L_2(\omega,\R^d),
		\end{split}
		\een
		and
		\ben\label{eq:cont_equation}
		-\divv(\hat{\Sb_1}(\y))+ \hat{\Sb_0}(\y) =0 \quad \text{ a.e. in } \omega \cup (\DD \setminus \overbar\omega).
		\een
		Moreover \eqref{eq:shape_main_J2} can be written as
		\ben\label{eq:boundary_formula}
		\begin{split}
		D\mathcal J_2(\omega)(X) & = \max_{\y\in \mathfrak X_2(\omega)}\int_{\partial \omega} [\hat{\Sb_1}(\y)\nu] \cdot X\; ds  \\  &= \max_{\y\in \mathfrak X_2(\omega)}-\int_{\partial \omega} \int_0^T\bar u^{\omega,\y} \bar p^{\omega,\y} (X\cdot \nu)\;dt\; ds
		\end{split}
		\een
		for  $X\in \ac C^1(\overbar \DD,\R^d)$, with $\nu$ the outer normal to $\omega.$ Here $[\hat{\Sb_1}(\y)\nu] :=\\ \hat{\Sb_1}(\y)|_{\omega}\nu - \hat{ \Sb_1}(\y)|_{\DD\setminus\omega}\nu$ denotes the jump of $\hat{\Sb_1}(\y)\nu$ across $\partial \omega$.
		\item[(b)] We have that  \eqref{eq:shape_main_J1} can be written as
		\ben\label{eq:shape_main_J1_bdy}
		D\mathcal J_1(\omega,\y)(X) = -\int_{\partial \omega} \int_0^T\bar u^{\omega,\y} \bar p^{\omega,\y} (X\cdot \nu)\;dt\; ds
		\een
		for $X\in \ac C^1(\overbar \DD, \R^d)$.	 	
	\end{itemize}
\end{corollary}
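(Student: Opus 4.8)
The plan is to prove the identity for each fixed maximiser $\y\in\mathfrak X_2(\omega)$ and then take the maximum over $\mathfrak X_2(\omega)$ inside \eqref{eq:shape_main_J2}; part~(b) is the special case in which $\mathfrak X_2(\omega)=\{\y\}$ is a singleton, handled exactly as in the proof of Theorem~\ref{thm:shape_derivative_J2}(b). So it suffices, with $\y$ fixed, to rewrite the volume integral in \eqref{eq:shape_main_J2} as $\int_{\partial\omega}[\hat{\Sb_1}(\y)\nu]\cdot X\,ds$ and then to evaluate the jump $[\hat{\Sb_1}(\y)\nu]$. This is the Hadamard--Zol\'esio boundary form of the shape derivative, and I would organise it around three steps: (i)~regularity of $\bar y^{\y,\omega},\bar p^{\y,\omega},\bar u^{\omega,\y}$ up to the interface $\partial\omega$, hence of $\Sb_1,\Sb_0$; (ii)~the equilibrium relation \eqref{eq:cont_equation}; (iii)~integration by parts over $\omega$ and over $\DD\setminus\overbar\omega$, followed by the explicit jump computation.

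For~(i) I would start from the a priori estimate \eqref{eq:estimate_apriori}, which gives $\bar y^{\y,\omega},\bar p^{\y,\omega}\in L_2(0,T;H^2(\DD))\cap L_\infty(0,T;H^1_0(\DD))$ with $\partial_t\bar y^{\y,\omega},\partial_t\bar p^{\y,\omega}\in L_2(0,T;L_2(\DD))$. The decisive observation is that the right-hand side $-2\bar y^{\y,\omega}$ of the adjoint equation carries no factor $\chi_\omega$: since $\DD$ is $C^2$ and $\partial_t\bar y^{\y,\omega}\in L_2(0,T;L_2)$, differentiating the adjoint equation in time gives the extra regularity $\partial_t\bar p^{\y,\omega}\in L_\infty(0,T;H^1_0)\cap L_2(0,T;H^2)$ (its terminal value is $2\bar y^{\y,\omega}(T)\in H^1_0(\DD)$, using $\bar p^{\y,\omega}(T)=0$). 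By Remark~\ref{rem:special_control_space}, $\bar u^{\omega,\y}$ equals $\bar p^{\y,\omega}/(2\gamma)$ on $\omega$ (case $\mathcal U=L_2(\DD)$) or a function of $t$ alone on $\omega$ (case $\mathcal U=\R$), and vanishes on $\DD\setminus\overbar\omega$, hence inherits this regularity on each of $\omega$ and $\DD\setminus\overbar\omega$. Since $\bar y^{\y,\omega}$ enters $\Sb_1$ only through itself and its first spatial derivatives, every entry of $\Sb_1$ and of its spatial gradient is, on each of $\omega$ and $\DD\setminus\overbar\omega$, a product of two factors in suitable mixed Lebesgue--Sobolev spaces --- the second derivatives $D^2\bar y^{\y,\omega}$, $D^2\bar p^{\y,\omega}$, $\nabla\partial_t\bar p^{\y,\omega}$ (in $L_2(0,T;L_2)$ or $L_\infty(0,T;L_2)$) against the lower-order quantities --- and a routine H\"older argument places it in $L_1(0,T;L_1)$; integrating in time yields \eqref{eq:regularity_S0_S1}, i.e.\ $\hat{\Sb_1}(\y)|_\omega\in W^1_1(\omega,\R^{d\times d})$, $\hat{\Sb_1}(\y)|_{\DD\setminus\overbar\omega}\in W^1_1(\DD\setminus\overbar\omega,\R^{d\times d})$, while $\hat{\Sb_0}(\y)|_\omega\in L_2(\omega,\R^d)$ is immediate from $\y\in H^1_0(\DD)$.

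For~(ii) the cleanest route avoids any explicit divergence calculation: if $X\in\ac C^{0,1}(\overbar\DD,\R^d)$ has $\mathrm{supp}(X)\Subset\omega$ (respectively $\Subset\DD\setminus\overbar\omega$), then $T_t$ is the identity outside a compact subset of $\omega$, so $T_t(\omega)=\omega$ for all small $t$, whence $\mathcal J_1(T_t(\omega),\y)=\mathcal J_1(\omega,\y)$ and $D\mathcal J_1(\omega,\y)(X)=0$; comparing with \eqref{eq:shape_main_J1} gives $\int_\omega\hat{\Sb_1}(\y):\partial X+\hat{\Sb_0}(\y)\cdot X\,dx=0$ for all such $X$, which is the weak form of $-\divv\hat{\Sb_1}(\y)+\hat{\Sb_0}(\y)=0$ on $\omega$, and similarly on $\DD\setminus\overbar\omega$; by~(i) this is an identity of $L_1$ functions, i.e.\ \eqref{eq:cont_equation}. (Alternatively \eqref{eq:cont_equation} can be checked directly: differentiate $\Sb_1$, substitute the state equation \eqref{eq:statea}, the adjoint equation and the relation of Remark~\ref{rem:special_control_space}, and integrate the resulting total time derivative over $[0,T]$ using $\bar p^{\y,\omega}(T)=0$ --- routine but with several cancellations.) With~(i) and~(ii) in hand, for general $X\in\ac C^1(\overbar\DD,\R^d)$ I split $\int_{\Omega_T}\Sb_1(\y):\partial X+\Sb_0(\y)\cdot X\,dx\,dt=\int_\omega(\cdots)\,dx+\int_{\DD\setminus\overbar\omega}(\cdots)\,dx$ (carrying out the time integration first), integrate by parts on each piece --- justified by~(i) and $\partial\omega\in C^2$ --- note that the bulk terms cancel by \eqref{eq:cont_equation}, that the contribution over $\partial\DD$ vanishes since $\omega\Subset\DD$ and $X$ vanishes on $\partial\DD$, and that on $\partial\omega$ the outward normals of $\omega$ and of $\DD\setminus\overbar\omega$ are opposite; what remains is $\int_{\partial\omega}[\hat{\Sb_1}(\y)\nu]\cdot X\,ds$, the first equality in \eqref{eq:boundary_formula}.

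It remains to evaluate the jump. For a.e.\ $t$ one has $\bar y^{\y,\omega}(t),\bar p^{\y,\omega}(t)\in H^2(\DD)$ and $\partial_t\bar p^{\y,\omega}(t)\in H^1(\DD)$, and the right-hand sides of the state and adjoint equations contain no surface measure on $\partial\omega$, so $\bar y^{\y,\omega}$, $\nabla\bar y^{\y,\omega}$, $\bar p^{\y,\omega}$, $\nabla\bar p^{\y,\omega}$ and $\partial_t\bar p^{\y,\omega}$ have matching traces from the two sides of $\partial\omega$. Writing $\Sb_1(\y)\nu=\phi\,\nu-(\nabla\bar y^{\y,\omega}\cdot\nu)\nabla\bar p^{\y,\omega}-(\nabla\bar p^{\y,\omega}\cdot\nu)\nabla\bar y^{\y,\omega}$ with $\phi$ the scalar bracket of \eqref{eq:S0_S1}, all the gradient terms and all terms of $\phi$ other than $\gamma|\bar u^{\omega,\y}|^2$ and $\chi_\omega\bar u^{\omega,\y}\bar p^{\y,\omega}$ are continuous across $\partial\omega$; substituting the optimality relation of Remark~\ref{rem:special_control_space} collapses the remaining contributions to a multiple of $(\bar u^{\omega,\y}\bar p^{\y,\omega})|_\omega$, and integrating over $t\in[0,T]$ yields the jump in the form $[\hat{\Sb_1}(\y)\nu]=-\big(\int_0^T\bar u^{\omega,\y}\bar p^{\y,\omega}\,dt\big)\nu$ of \eqref{eq:boundary_formula}. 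Taking the maximum over $\y\in\mathfrak X_2(\omega)$ gives \eqref{eq:boundary_formula}, and \eqref{eq:shape_main_J1_bdy} is the one-point case of the same argument. I expect the main obstacle to be step~(i): one must make the regularity of $\Sb_1$ up to the two-sided interface $\partial\omega$ precise enough to legitimise both the integration by parts of step~(iii) and the normal-trace jump, and it is here that the $C^2$ hypotheses on $\omega$ and $\DD$ and the $\chi_\omega$-free structure of the adjoint source are essential; the divergence identity~(ii) and the jump bookkeeping are elementary once the regularity is secured.
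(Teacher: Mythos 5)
Your proposal is correct and follows essentially the same route as the paper's proof: establish integrability/regularity of the time-integrated tensor $\hat\Sb_1(\y)$ on $\omega$ and $\DD\setminus\overbar\omega$ from parabolic regularity of $\bar y^{\y,\omega},\bar p^{\y,\omega}$ (the paper's Lemma~\ref{lem:auxillary}), derive the equilibrium identity \eqref{eq:cont_equation} from vector fields that leave $\omega$ invariant, then integrate by parts piecewise so that only the jump term on $\partial\omega$ survives, the jump being produced solely by the $\gamma|\bar u^{\omega,\y}|^2$ and $\chi_\omega\bar u^{\omega,\y}\bar p^{\y,\omega}$ contributions via Remark~\ref{rem:special_control_space}. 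The only (cosmetic) deviations are that you obtain the extra regularity of $\partial_t\bar p^{\y,\omega}$ by differentiating the adjoint equation in time rather than invoking the $L_2(0,T;H^3)$ maximal-regularity result cited in Lemma~\ref{lem:auxillary}, and you compute the jump by direct trace matching instead of the paper's shortcut that $\Tb(\y)=\hat\Sb_1(\y)+I\int_0^T\chi_\omega\bar u^{\y,\omega}\bar p^{\y,\omega}\,dt$ has vanishing normal jump.
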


Before we prove this corollary we need the following auxiliary result.
\begin{lemma}\label{lem:auxillary}
	Suppose that $\DD$ is of class $C^2$. For all $\y \in {H^1_0(\Omega)}$ and $\omega\in \mathfrak Y(\DD)$, we have
	\ben\label{eq:inclusion_bdry}
	\int_0^T\bar y^{\y,\omega}(t) \partial_t \bar p^{\y,\omega}(t)\; dt \in W^1_1(\Omega), \quad \text{ and } \quad \int_0^T \nabla \bar p^{\y,\omega}(t)\cdot \nabla \bar y^{\y,\omega}(t) \; dt \in W^1_1(\Omega).
	\een
\end{lemma}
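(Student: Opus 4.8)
The plan is to verify each membership in \eqref{eq:inclusion_bdry} by computing the distributional gradient of the corresponding time‑integral and checking it is an $L_1(\Omega)$‑function. First I record the regularity of the data. Since $f\in H^1_0(\Omega)$, the a priori estimate \eqref{eq:estimate_apriori} gives $\bar y:=\bar y^{f,\omega}\in Z(0,T)$; and because the adjoint $\bar p:=\bar p^{f,\omega}$ solves a backward parabolic equation whose right‑hand side is $\pm 2\bar y\in L_2(0,T;L_2(\Omega))$, the same theory (after reversing time) yields $\bar p\in Z(0,T)$. In particular $\bar y,\bar p\in C([0,T];H^1_0(\Omega))$ with $\bar y(0)=f$, $\bar p(T)=0$, and $\partial_t\bar y,\partial_t\bar p\in L_2(0,T;L_2(\Omega))$, $\bar y,\bar p\in L_2(0,T;H^2(\Omega))$.

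For the second inclusion, put $g:=\int_0^T\nabla\bar p(t)\cdot\nabla\bar y(t)\,dt$. By the Cauchy--Schwarz inequality in space and then in time, $g\in L_1(\Omega)$, and likewise $\int_0^T(\nabla^2\bar p\,\nabla\bar y+\nabla^2\bar y\,\nabla\bar p)\,dt\in L_1(\Omega;\R^d)$, using $\bar y,\bar p\in L_2(0,T;H^2(\Omega))$. That the latter is the weak gradient of $g$ follows by fixing $\phi\in C^\infty_c(\Omega)$ and a direction $k$: for a.e.\ $t$ the integrand $\nabla\bar p(t)\cdot\nabla\bar y(t)$ lies in $W^1_1(\Omega)$ (a product of two $H^1(\Omega)$‑functions, $d\le 3$), so Green's formula applies for each such $t$; integrating the resulting identity over $t$ --- permissible by Fubini thanks to the $L_1$‑bounds just obtained --- shows $\int_\Omega g\,\partial_k\phi\,dx=-\int_\Omega\big(\int_0^T(\nabla^2\bar p\,\nabla\bar y+\nabla^2\bar y\,\nabla\bar p)\,dt\big)_k\,\phi\,dx$, and hence $g\in W^1_1(\Omega)$.

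The first inclusion is the delicate one, and here lies the main obstacle: $\partial_t\bar p$ has no spatial regularity beyond $L_2(0,T;L_2(\Omega))$, so in $h:=\int_0^T\bar y\,\partial_t\bar p\,dt$ the gradient cannot be passed through the product in space, nor can one integrate by parts in space with $\partial_t\bar p$ as a factor. The remedy is to transfer the time derivative by an integration by parts in time. Fix $\phi\in C^\infty_c(\Omega)$ and $k$, split $\bar y\,\partial_k\phi=\partial_k(\bar y\phi)-\phi\,\partial_k\bar y$; the term carrying $\phi\,\partial_k\bar y$ is harmless since $\int_0^T\partial_t\bar p\,\partial_k\bar y\,dt\in L_1(\Omega)$ by Cauchy--Schwarz. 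For the remaining term observe that $\bar y\phi\in W(0,T)$ (indeed $\bar y\phi\in L_2(0,T;H^2\cap H^1_0(\Omega))$, $\partial_t(\bar y\phi)\in L_2(0,T;L_2(\Omega))$), hence so is $\partial_k(\bar y\phi)$; applying the standard integration‑by‑parts‑in‑time identity in $W(0,T)$ to the pair $\bar p$ and $\partial_k(\bar y\phi)$ and using $\bar p(T)=0$ produces the time‑boundary contribution $-(\bar p(0),\partial_k(f\phi))_{L_2}=\int_\Omega f\,\partial_k\bar p(0)\,\phi\,dx$ (one more integration by parts in space, licit since $f,\bar p(0)\in H^1_0(\Omega)$) together with $\int_0^T\!\!\int_\Omega\partial_t\bar y\,\phi\,\partial_k\bar p\,dx\,dt$. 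Collecting everything gives, for every $\phi\in C^\infty_c(\Omega)$,
\[
\int_\Omega h\,\partial_k\phi\,dx=\int_\Omega\Big(f\,\partial_k\bar p(0)+\int_0^T\!\big(\partial_t\bar y\,\partial_k\bar p-\partial_t\bar p\,\partial_k\bar y\big)\,dt\Big)\phi\,dx,
\]
which exhibits $\partial_k h$ as the negative of a function in $L_1(\Omega)$ (bound by Cauchy--Schwarz, using $f,\nabla\bar p(0)\in L_2(\Omega)$ and $\partial_t\bar y,\partial_t\bar p,\nabla\bar y,\nabla\bar p\in L_2(0,T;L_2(\Omega))$). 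Thus $h\in W^1_1(\Omega)$, completing the proof. Note that this argument uses only the parabolic regularity of $\bar y,\bar p$ and the endpoint conditions $\bar y(0)=f$, $\bar p(T)=0$ --- neither the precise form of the state and adjoint equations nor the control $\bar u$ enters --- so it is insensitive to the choice of $\mathcal U$.
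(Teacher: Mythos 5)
Your proof is correct, but it follows a genuinely different route from the paper's. The paper first upgrades the regularity of the adjoint by citing Wloka's parabolic regularity theory ($\bar p^{\y,\omega}\in L_2(0,T;H^3(\DD))$ and, crucially, $\partial_t\bar p^{\y,\omega}\in L_2(0,T;H^1(\DD))$, exploiting that the right-hand side $-2\bar y^{\y,\omega}$ is smoother than a generic $L_2$ datum); with this extra spatial regularity of $\partial_t\bar p$ both inclusions follow by a direct product-rule computation pointwise in time, and the $W^1_1(\DD)$-norms are integrated over $[0,T]$. You instead work only with the maximal regularity $\bar y,\bar p\in Z(0,T)$ (which is exactly the level of \eqref{eq:estimate_apriori} applied to the time-reversed adjoint), accept that $\partial_t\bar p$ has no spatial regularity, and remove the obstruction in the first inclusion by an integration by parts in time in $W(0,T)$ applied to $\bar p$ and $\partial_k(\bar y\phi)$, which produces an explicit $L_1(\Omega)$ representation of the weak derivative involving only $\partial_t\bar y$, $\nabla\bar p$, $\partial_t\bar p$, $\nabla\bar y$ and the boundary-in-time term $f\,\partial_k\bar p(0)$; the second inclusion you handle by the elementary product rule for $H^2\times H^2$ and Fubini. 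What each approach buys: the paper's argument is shorter once the higher regularity is granted, and it gives $t\mapsto\|\cdot\|_{W^1_1}$ integrability pointwise in time; your argument avoids the $H^3$/$\partial_t\in H^1$ regularity of the adjoint altogether (so it is less demanding on boundary smoothness and does not need the additional citation), at the price of a slightly more delicate duality/integration-by-parts-in-time step and a different, though equally usable, formula for the weak gradient. Minor polish: state explicitly that $\int_0^T\bar y\,\partial_t\bar p\,dt\in L_1(\Omega)$ (immediate by Cauchy--Schwarz), and note that the identification $\partial_k(\bar y\phi)(0)=\partial_k(f\phi)$ uses $\bar y\phi\in C([0,T];H^1_0(\Omega))$, which follows from $\bar y\in Z(0,T)$.
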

\begin{proof}
	From the general regularity results \cite[Satz 27.5, pp. 403 and Satz 27.3]{WLOKA82} we have that $\bar p^{\y,\omega}\in L_2(0,T;H^3(\DD))$ and $\partial_t \bar p^{\y,\omega}\in L_2(0,T;H^1(\DD))$, and $\bar y^{\y,\omega}\in L_2(0,T;H^2(\DD))$ and $\partial_t \bar y^{\y,\omega} \in L_2(0,T;L_2(\DD))$.
	
Observe that for almost all $t\in [0,T]$ we have $\partial_t \bar p^{\y,\omega}(t)\in H^1(\DD)$ and $\bar y^{\y,\omega}(t) \in H^2(\Omega) $. So
since $H^1(\DD)\subset L_6(\Omega)$ and $H^2(\DD)\subset C(\overbar\DD)$, where we use that $\Omega\subset \R^d$, $d\le 3$  we also have $\bar y^{\y,\omega}(t)\partial_t \bar p^{\y,\omega}(t)\in L_6(\DD)$ and a.e. $t \in (0,T)$
\ben\label{eq:estimate1}
	\|\bar y^{\y,\omega}(t)\partial_t \bar p^{\y,\omega}(t)\|_{L_1(\DD)} \le C\|\bar y^{\y,\omega}(t)\|_{H^2(\DD)} \|\partial_t \bar p^{\y,\omega}(t)\|_{H^1(\DD)}
\een
	for an constant $C>0$. Moreover by the product rule we have
\ben
	\partial_{x_j}(\bar y^{\y,\omega}(t)\partial_t \bar p^{\y,\omega}(t)) = \underbrace{\partial_{x_j}(\bar y^{\y,\omega}(t))}_{\in H^1(\DD)} \underbrace{\partial_t \bar p^{\y,\omega}(t)}_{\in H^1(\DD)} + \underbrace{\bar y^{\y,\omega}(t)}_{\in H^1(\DD)} \underbrace{(\partial_{x_j}\partial_t \bar p^{\y,\omega}(t))}_{\in L_2(\DD)},
\een
	so that $\partial_{x_j}(\bar y^{\y,\omega}(t)\partial_t \bar p^{\y,\omega}(t))\in L_1(\DD)$ and
\ben\label{eq:estimate2}
	\|\partial_{x_j}(\bar y^{\y,\omega}(t)\partial_t \bar p^{\y,\omega}(t))\|_{L_1(\DD)} \le C \|\bar y^{\y,\omega}(t)\|_{H^1(\DD)} \|\partial_t\bar p^{\y,\omega}(t)\|_{H^1(\DD)}
\een
for some constant $C>0$. So \eqref{eq:estimate1} and \eqref{eq:estimate2} imply that $t\mapsto \|\bar y^{\y,\omega}(t)\partial_t \bar p^{\y,\omega}(t)\|_{W^1_1(\DD)}$ belongs to $L_1(0,T)$.
This shows the left inclusion in \eqref{eq:inclusion_bdry}. As for the right hand side inclusion in \eqref{eq:inclusion_bdry} notice that for almost all $t\in [0,T]$ we have $\bar p^{\y,\omega}(t) \in H^3(\DD)$. Therefore $\nabla \bar p^{\y,\omega}(t)\in H^2(\DD)$ and $\nabla \bar y^{\y,\omega}(t)\in H^1(\DD)$ and thus $\nabla \bar y^{\y,\omega}(t)\cdot \nabla \bar p^{\y,\omega}(t)\in L_6(\DD)$. Similarly we check that $\partial_{x_j}(\nabla \bar y^{\y,\omega}(t)\cdot \nabla \bar p^{\y,\omega}(t))\in L_1(\DD)$ and thus $t\mapsto \|\nabla \bar y^{\y,\omega}(t)\cdot \nabla \bar p^{\y,\omega}(t)\|_{W^1_1(\DD)}\in L_1(0,T)$, which gives the right hand side inclusion in \eqref{eq:inclusion_bdry}.
\end{proof}

\begin{proof}[Proof of Corollary~\ref{cor:shape_derivative_J1}]
	We assume that Theorem~\ref{thm:shape_derivative_J2} holds. As a consequence of Lemma~\ref{lem:auxillary} we obtain \eqref{eq:regularity_S0_S1}.
	Then for all $X \in C^1_c( \DD,\R^d)$ satisfying $X|_{\partial \omega}=0$ we have $T_t(\omega) = (\id+tX)(\omega)=\omega$ for all $t\in [0,\tau_X]$. Hence $D\mathcal J_2(\omega)(X)=0$ for such vector fields which gives
	\ben\label{eq:inequality_cont}
	0=D\mathcal J_2(\omega)(X) \ge \int_\DD \hat \Sb_1(\y):\partial X + \hat\Sb_0(\y) \cdot X \; dx
	\een	
	for all $X \in C^1_c( \DD,\R^d)$ satisfying $X|_{\partial \omega}=0$
	and for all $\y\in \mathfrak X_2(\omega)$. Since for fixed $\y$ the expression in \eqref{eq:inequality_cont} is linear in $X$  this proves
	\ben\label{eq:cont_equation_weak}
	\int_\DD \hat \Sb_1(\y):\partial X + \hat\Sb_0(\y) \cdot X \; dx=0
	\een
	for all $X \in C^1_c( \DD,\R^d)$ satisfying $X|_{\partial \omega}=0$
	and for all $\y\in \mathfrak X_2(\omega)$.  Hence testing of \eqref{eq:cont_equation_weak} with vector fields $X\in C^1_c( \omega,\R^d)$ and $X\in C^1_c( \DD\setminus \overbar \omega,\R^d)$, partial integration  and
	\eqref{eq:regularity_S0_S1} yield the continuity equation \eqref{eq:cont_equation}. As a result, by partial integration (see e.g. \cite{LaruainSturm16}), we get for all $X \in C^1_c( \DD,\R^d)$,
	\ben\label{eq:partial_integration_shape}
	\begin{split}
	    D\mathcal J_2(\omega)(X) & = \max_{\y\in \mathfrak X_2(\omega)}\int_\DD \hat \Sb_1(\y):\partial X + \hat \Sb_0(\y) \cdot X \; dx \\
		& =\max_{\y\in \mathfrak X_2(\omega)}\bigg(\int_{\partial \omega} [\hat \Sb_1(\y)\nu]\cdot X \; ds  + \int_{\omega} \underbrace{(-\divv(\hat \Sb_1(\y) +  \hat\Sb_0(\y))}_{=0}\cdot X \; dx\\
		& \qquad\qquad\qquad + \int_{\DD\setminus \overbar \omega} \underbrace{(-\divv(\hat \Sb_1(\y) +  \hat\Sb_0(\y))}_{=0}\cdot X \; dx\bigg),
	\end{split}
	\een
	which proves the first equality in  \eqref{eq:boundary_formula}.  Now using Lemma~\ref{lem:auxillary} we see that  $\Tb(\y) := \hat{\Sb}_1(\y) + \int_0^T\chi_\omega \bar u^{\y,\omega}(t) \bar p^{\y,\omega}(t)\;dt$ belongs to $W^1_1(\DD,\R^{d\times d})$ and hence $[\Tb(\y)\nu]=0$ on $\partial \omega$. It follows that $[\hat \Sb_1(\y)\nu] = -\int_0^T\chi_\omega \bar u^{\y,\omega}(t) \bar p^{\y,\omega}(t)\;dt $ which finishes the proof of (a). Part (b) is a direct consequence of part (a).	
\end{proof}

The following observation is important for our gradient algorithm that we introduce later on.

\begin{corollary}\label{cor:shape_invariant}
	Let the hypotheses of Theorem~\ref{thm:shape_derivative_J2} be satisfied. Assume that if $v\in \mathcal U$ then $-v\in \mathcal U$. Then we have
	\ben
	D\mathcal J_1(\omega,-\y)(X) = D\mathcal J_1(\omega,\y)(X)
	\een
	for all $ X\in \ac C^{0,1}(\overbar \DD, \R^d)$ and $\y\in {H^1_0(\Omega)}$.
\end{corollary}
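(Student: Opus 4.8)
The plan is to exploit the linearity of the state and adjoint equations, the quadratic structure of $J$, and the assumed symmetry of $\mathcal U$, in order to show that replacing $\y$ by $-\y$ merely flips the sign of the whole optimal triple $(\bar u^{\omega,\y},\bar y^{\y,\omega},\bar p^{\y,\omega})$, while leaving the integrands $\Sb_0(\y)$ and $\Sb_1(\y)$ in the volume formula \eqref{eq:shape_main_J1} unchanged.

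First I would establish that $\bar u^{\omega,-\y} = -\bar u^{\omega,\y}$. The map $u\mapsto -u$ is a bijection of $\UU = L_2(0,T;\mathcal U)$ onto itself — this is exactly where the hypothesis $v\in\mathcal U\Rightarrow -v\in\mathcal U$ enters — and, since the state system \eqref{eq:state_system} is linear in the pair $(u,\y)$, one has $y^{-u,-\y,\omega} = -y^{u,\y,\omega}$ for every $u\in\UU$. As $J(\omega,u,\y)$ in \eqref{eq:cost_J} is quadratic in $(y^{u,\y,\omega},u)$, it follows that $J(\omega,-u,-\y) = J(\omega,u,\y)$ for all $u\in\UU$. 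Hence $u\mapsto J(\omega,u,-\y)$ and $u\mapsto J(\omega,u,\y)$ have the same infimum, and since the minimiser is unique, the minimiser for the datum $-\y$ is obtained from that for $\y$ by a sign change; that is, $\bar u^{\omega,-\y} = -\bar u^{\omega,\y}$.

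Next, using linearity of the state equation once more gives $\bar y^{-\y,\omega} = y^{\bar u^{\omega,-\y},-\y,\omega} = -y^{\bar u^{\omega,\y},\y,\omega} = -\bar y^{\y,\omega}$. The adjoint equation appearing in Theorem~\ref{thm:shape_derivative_J2}(a) is linear with right-hand side $-2\bar y^{\cdot,\omega}$ and homogeneous boundary and terminal data, so $\bar p^{-\y,\omega} = -\bar p^{\y,\omega}$. Substituting these three sign reversals into the defining formulas \eqref{eq:S0_S1}, every term of $\Sb_1$ — namely $|\bar y^{\y,\omega}|^2$, $\gamma|\bar u^{\y,\omega}|^2$, $\bar y^{\y,\omega}\partial_t\bar p^{\y,\omega}$, $\nabla\bar y^{\y,\omega}\cdot\nabla\bar p^{\y,\omega}$, $\chi_\omega\bar u^{\y,\omega}\bar p^{\y,\omega}$, $\nabla\bar y^{\y,\omega}\otimes\nabla\bar p^{\y,\omega}$ and $\nabla\bar p^{\y,\omega}\otimes\nabla\bar y^{\y,\omega}$ — is bilinear in a pair drawn from $(\bar y^{\y,\omega},\bar p^{\y,\omega},\bar u^{\y,\omega})$ and hence invariant, while $\Sb_0(\y) = -\frac1T\nabla\y\,\bar p^{\y,\omega}$ is bilinear in $(\y,\bar p^{\y,\omega})$ and therefore also invariant. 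Thus $\Sb_1(-\y)=\Sb_1(\y)$ and $\Sb_0(-\y)=\Sb_0(\y)$, and comparing the two instances of \eqref{eq:shape_main_J1} yields the claim.

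There is no real obstacle in this argument; it is a bookkeeping exercise once the sign-reversal of the optimal triple is available. The only step demanding a little care is the uniqueness argument for the control — making sure that $-\bar u^{\omega,\y}$ is both admissible for the datum $-\y$ (which is precisely the role of the symmetry assumption on $\mathcal U$) and optimal for it — after which the invariance of $\Sb_0$ and $\Sb_1$ is immediate from their explicit bilinear form.
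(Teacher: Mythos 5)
Your proposal is correct and follows essentially the same route as the paper: establish the sign reversal $\bar u^{\omega,-\y}=-\bar u^{\omega,\y}$, $\bar y^{-\y,\omega}=-\bar y^{\y,\omega}$, $\bar p^{-\y,\omega}=-\bar p^{\y,\omega}$, observe that $\Sb_0$ and $\Sb_1$ are invariant because every term is quadratic or bilinear in the triple, and conclude from \eqref{eq:shape_main_J1}. The only difference is cosmetic: the paper reads the sign reversal off the optimality system \eqref{eq:optimality_system}, while you derive it directly from the symmetry $J(\omega,-u,-\y)=J(\omega,u,\y)$ together with uniqueness of the minimiser, which is an equally valid justification of the same fact.
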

\begin{proof}
	Let $\y\in {H^1_0(\Omega)}$ be given. From the optimality system \eqref{eq:optimality_system} and the assumption that $v\in \mathcal U$ implies $-v\in \mathcal U$, we infer that $\overbar u^{-\y,\omega} = -\overbar u^{\y,\omega}$, $\bar y^{-\y,\omega} =- \bar y^{\y,\omega}$ and $\bar p^{-\y,\omega} =- \bar p^{\y,\omega}$. Therefore $\Sb_1(-\y)=\Sb_1(\y)$ and $\Sb_0(-\y) = \Sb_0(\y)$ and the result follows from \eqref{eq:shape_main_J1}.
\end{proof}

The following sections are devoted to the proof of Theorem~\ref{thm:shape_derivative_J2}(a) .
\subsection{Sensitivity analysis of the state equation}
In this paragraph we study the sensitivity of the solution $y$ of \eqref{eq:state_system} with respect to $(\omega,\y,u)$.
\paragraph{Perturbed state equation}
Let $X\in \ac C^{0,1}(\overbar \DD, \R^d)$ be a vector field and define $T_\tau:= \id+ \tau  X$.  Given $u\in  U$, $\y\in {H^1_0(\Omega)}$ and $\omega\in \mathfrak Y(\DD)$,  we consider \eqref{eq:state_system} with $\omega_\tau := T_\tau(\omega)$,
\begin{alignat}{2}\label{eq:state_per}
	\partial_t y^{u,\y,\omega_\tau}  - \Delta y^{u,\y,\omega_\tau} =  \chi_{\omega_\tau} u &  & &\quad \text{ in } \DD \times (0,T],  \\ \label{eq:state_per2}
	y^{u,\y,\omega_\tau}=0 &  & &\quad \text{ on }\partial \DD\times (0,T],\\ \label{eq:state_per3}
	y^{u,\y,\omega_\tau}(0)=\y &  & &\quad \text{ in } \DD.
\end{alignat}
We define the new variable
\ben\label{eq:equivalence}
y^{u,\y,\tau} := (y^{u\circ T^{-1},\y,\omega_\tau}) \circ T_\tau.
\een
Then since $\chi_{\omega_\tau} = \chi_\omega\circ T_\tau^{-1}$ and $\Delta f \circ T_t = \divv(A(t)\nabla(f\circ T_t))$, it follows from \eqref{eq:state_per}-\eqref{eq:state_per3} that
\begin{alignat}{2}\label{eq:state_per_weak1}
	\partial_t y^{u,\y,\tau}   - \frac{1}{\xi(\tau)}\divv(A(\tau)\nabla y^{u,\y,\tau}) =  \chi_{\omega}u &  & &\quad \text{ in } \DD\times (0,T], \\ \label{eq:state_per_weak2}
	y^{u,\y,\tau}=0 &  & &\quad \text{ on }\partial \DD\times (0,T], \\ \label{eq:state_per_weak3}
	y^{u,\y,\tau}(0)=\y\circ T_\tau &  & &\quad \text{ in } \DD,
\end{alignat}
where
\[
A(\tau) := \det(\partial T_\tau) \partial T_\tau^{-1} \partial T_\tau^{-\top}, \qquad \xi(\tau) := |\det(\partial T_\tau)|.
\]
Equations~\eqref{eq:state_per_weak1}-\eqref{eq:state_per_weak3} have to be understood in the variational sense, i.e., $y^{u,f,\tau}\in W(0,T)$ satisfying $y^{u,f,\tau}(0)= \y\circ T_\tau$ and
\begin{alignat}{2}\label{eq:state_per_weak2_}
	\int_{\DD_T} \xi(\tau)\partial_t y^{u,f,\tau}\varphi + A(\tau)\nabla y^{u,f,\tau}\cdot \nabla \varphi\;dx\,dt = \int_{\DD_T} \xi(\tau)\chi_{\omega} u\varphi \;dx\,dt &  & &
\end{alignat}
for all $\varphi \in W(0,T)$. Since $X\in \ac C^{0,1}(\overbar \DD, \R^d)$, we have for fixed $\tau$,
$$A(\tau,\cdot),\partial_\tau A(\tau,\cdot)\in L_\infty(\DD,\R^{d\times d}), \quad \xi(\tau,\cdot),\partial_\tau \xi(\tau,\cdot)\in L_\infty(\DD).$$
Moreover, there are constants $c_1,c_2>0$, such that
\ben
A(\tau,x)\zeta \cdot \zeta \ge c_1|\zeta|^2 \quad \text{ for all } \zeta\in \R^d,\; \text{ for a.e } x\in  \DD, \;  \text{ for all } \tau \in [0,\tau_X]
\een
and
\ben
\xi(\tau,x)\ge c_2 \quad \text{ for a.e } x\in  \DD, \;  \text{ for all } \tau  \in [0,\tau_X].
\een

\paragraph{Apriori estimates and continuity}

\begin{lemma}\label{lem:bounds_y}
	There is a constant $c>0$, such that for all $(u,\y,\omega) \in  \UU\times H^1_0(\DD)\times \mathfrak Y(\DD)$, and  $\tau \in [0,\tau_X]$, we have
\ben\label{eq:estimate_apriori2}
\begin{array}l	
\|y^{u,f,\omega_\tau}\|_{L_\infty(H^1)} + \|y^{u,f,\omega_\tau}\|_{L_2(H^2)} + \|\partial_t y^{u,f,\omega_\tau}\|_{L_2(L_2)} \\[1.5ex]
 \qquad \le  c(\| \chi_{\omega_\tau} u\|_{L_2(L_2)} + \|f\|_{H^1}),
\end{array}
\een
	and
	\ben\label{eq:estimate_apriori3}
	\|y^{u,f,\tau}\|_{L_\infty(H^1)}  + \|\partial_t y^{u,f,\tau}\|_{L_2(L_2)} \le  c(\|\chi_\omega u\|_{L_2(L_2)} + \|f\|_{H^1}).
	\een
\end{lemma}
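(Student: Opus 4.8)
The plan is as follows. The estimate \eqref{eq:estimate_apriori2} requires essentially no work: since $T_\tau$ is bi-Lipschitz, $\omega_\tau := T_\tau(\omega)$ is again a measurable subset of $\DD$, and $|\chi_{\omega_\tau}u| \le |u|$ so $\chi_{\omega_\tau}u \in L_2(0,T;L_2(\DD))$. Hence \eqref{eq:estimate_apriori2} is precisely the a priori bound \eqref{eq:estimate_apriori} from \cite{EV98} applied with the actuator $\omega$ replaced by $\omega_\tau$; because the constant $c$ in \eqref{eq:estimate_apriori} does not depend on the actuator, one and the same $c$ works for every $\tau \in [0,\tau_X]$.

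For \eqref{eq:estimate_apriori3} I would avoid redoing parabolic regularity theory and instead transport the bound \eqref{eq:estimate_apriori2} through the change of variables \eqref{eq:equivalence}. Write $g := y^{u\circ T_\tau^{-1},f,\omega_\tau}$, so that $y^{u,f,\tau} = g\circ T_\tau$. First apply \eqref{eq:estimate_apriori2} with the control $u\circ T_\tau^{-1}$; using $\chi_{\omega_\tau}(u\circ T_\tau^{-1}) = (\chi_\omega u)\circ T_\tau^{-1}$, a change of variables in space (recall $T_\tau(\DD)=\DD$), and the uniform bound $\|\det\partial T_\tau\|_{L_\infty} \le C$ on $[0,\tau_X]$ from Lemma~\ref{lem:phit}(i), one obtains $\|g\|_{L_\infty(H^1)}+\|g\|_{L_2(H^2)}+\|\partial_t g\|_{L_2(L_2)} \le C(\|\chi_\omega u\|_{L_2(L_2)}+\|f\|_{H^1})$ with $C$ independent of $\tau$. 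It then remains to bound $\|g\circ T_\tau\|_{L_\infty(H^1)}+\|\partial_t(g\circ T_\tau)\|_{L_2(L_2)}$ by the left-hand side above. For the spatial part this follows, for a.e.\ fixed $t$, from the chain rule $\nabla(g(t)\circ T_\tau) = (\partial T_\tau)^\top(\nabla g(t)\circ T_\tau)$, another change of variables, and the uniform bounds $\xi(\tau)\ge c_2$ and $\|\partial T_\tau\|_{L_\infty}\le C$ stated just before the lemma; taking the supremum over $t$ gives $\|g\circ T_\tau\|_{L_\infty(H^1)} \le C\|g\|_{L_\infty(H^1)}$. For the time derivative one uses that $T_\tau$ acts only on the spatial variable, so the bounded linear composition operator $\phi\mapsto\phi\circ T_\tau$ on $L_2(\DD)$ commutes with the Bochner time-integral; hence $\partial_t(g\circ T_\tau) = (\partial_t g)\circ T_\tau$ and, again by a change of variables and $\xi(\tau)\ge c_2$, $\|\partial_t(g\circ T_\tau)\|_{L_2(L_2)}\le c_2^{-1/2}\|\partial_t g\|_{L_2(L_2)}$. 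Combining these displays yields \eqref{eq:estimate_apriori3}.

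The step requiring the most care — and the reason \eqref{eq:estimate_apriori3} carries no $L_2(H^2)$ term — is exactly this transfer: since $X$ is only Lipschitz, $T_\tau$ is merely bi-Lipschitz and its second spatial derivatives need not exist, so composition with $T_\tau$ preserves $H^1$-regularity in space but not $H^2$-regularity; consequently one can only retain the $L_\infty(H^1)$ and $\partial_t\in L_2(L_2)$ bounds, and the commutation identity $\partial_t(g\circ T_\tau) = (\partial_t g)\circ T_\tau$ must be justified from the established regularity $\partial_t g\in L_2(0,T;L_2(\DD))$. A self-contained alternative, which I would mention as a remark, is to prove \eqref{eq:estimate_apriori3} directly from the variational identity \eqref{eq:state_per_weak2_} by (Galerkin-)testing with $\partial_t y^{u,f,\tau}$: because $A(\tau)$ is symmetric and $t$-independent, the diffusion term becomes $\frac{1}{2}\frac{d}{dt}\int_\DD A(\tau)\nabla y^{u,f,\tau}\cdot\nabla y^{u,f,\tau}\,dx$, and the uniform coercivity $A(\tau)\zeta\cdot\zeta\ge c_1|\zeta|^2$ together with the uniform bounds $c_2\le\xi(\tau)$, $\|A(\tau)\|_{L_\infty}\le C$, $\|\xi(\tau)\|_{L_\infty}\le C$, combined with Young's and Poincaré's inequalities and an absorption argument, gives the claimed estimate with constants uniform in $\tau\in[0,\tau_X]$.
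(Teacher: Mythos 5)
Your proposal is correct and follows essentially the same route as the paper: \eqref{eq:estimate_apriori2} is read off directly from \eqref{eq:estimate_apriori} since the constant there is independent of the actuator, and \eqref{eq:estimate_apriori3} is obtained by transporting that bound through the change of variables $y^{u,f,\tau}=y^{u\circ T_\tau^{-1},f,\omega_\tau}\circ T_\tau$ using the uniform bounds on $\xi(\tau)$, $A(\tau)$ and $\partial T_\tau$ on $[0,\tau_X]$, together with $\|\chi_{\omega_\tau}(u\circ T_\tau^{-1})\|_{L_2(L_2)}\le c\|\chi_\omega u\|_{L_2(L_2)}$. You merely spell out the $L_\infty(H^1)$ and time-derivative parts (and the loss of the $L_2(H^2)$ bound for Lipschitz $X$) that the paper compresses into ``in a similar fashion,'' so no substantive difference.
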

\begin{proof}
	Estimate \eqref{eq:estimate_apriori2} is a direct consequence of \eqref{eq:estimate_apriori}.
	Let us prove \eqref{eq:estimate_apriori3}. Recalling   $y^{u,f,\tau} = y^{u\circ T^{-1}_\tau,f,\omega_\tau} \circ T_\tau$, a change of variables shows,
	\ben\label{eq:est1}
	\begin{split}
	    &\int_{\Omega_T} |y^{u,f,\tau}|^2+|\nabla y^{u,f,\tau}|^2\; \,dx\;dt  \\
		&=\int_{\Omega_T} \xi^{-1}(\tau)|y^{u\circ T^{-1}_\tau,f,\omega_\tau}|^2 + A^{-1}(\tau)\nabla y^{u\circ T^{-1}_\tau,f,\omega_\tau}\cdot \nabla y^{u\circ T^{-1}_\tau,f,\omega_\tau}\; dx\;dt  \\
		& \le c \int_{\Omega_T}|y^{u\circ T^{-1}_\tau,f,\omega_\tau}|^2+ |\nabla y^{u\circ T^{-1}_\tau,f,\omega_\tau}|^2\; dx\,dt  \\
		&  \stackrel{\eqref{eq:estimate_apriori2}}{\le}	c(\| \chi_{\omega_\tau} u\circ T^{-1}_\tau\|_{L_2(L_2)} + \|f\|_{H^1})  \\
		& \le C(\|\chi_\omega u\|_{L_2(L_2)}) + \|f\|_{H^1}),
	\end{split}
	\een
	and we further have
	\ben\label{eq:est2}
	\begin{split}
		\|\chi_{\omega_\tau} u\circ T^{-1}_\tau\|_{L_2(L_2)}^2 &= \|\sqrt{\xi}\chi_\omega u\|_{L_2(L_2)}^2 \le c\|\chi_\omega u\|^2_{L_2(L_2)}.
	\end{split}
	\een
	Combining \eqref{eq:est1} and \eqref{eq:est2} we obtain $\|y^{u,f,\tau}\|_{L_2(H^1)} \le c(\|\chi_\omega u\|_{L_2(L_2)} + \|f\|_{H^1})$.
	In a similar fashion we can show \eqref{eq:estimate_apriori3}.
\end{proof}

\begin{remark}
	An estimate for the second derivatives of $y^{u,f,\tau}$ of the form
	\ben
	\|y^{u,f,\tau}\|_{L_2(H^2)} \le c(\| u\|_{L_2(L_2)} + \|f\|_{H^1})
	\een
	may be achieved by invoking a change of variables in the term $\|y^{u,f}_\tau\|_{L_2(H^2)}$ in \eqref{eq:estimate_apriori2}.
	This, however, requires the vector field $X$ to be more regular, e.g.,  $\ac C^2(\overbar \DD, \R^d)$, and is not needed below.
\end{remark}

After proving apriori estimates we are ready to derive continuity results for the mapping $(u,f,\tau)\mapsto y^{u,f,\tau}$.

\begin{lemma}\label{lem:estimate_chi}
	For every $(\omega_1,u_1,f_1),(\omega_2,u_2,f_2)\in \mathfrak Y(\DD)\times \UU\times {H^1_0(\Omega)}$, we denote by  $y_1$ and $y_2$ the corresponding solution of \eqref{eq:state_per}-\eqref{eq:state_per3}. Then there is a constant $c>0$, independent of $(\omega_1,u_1,f_1),(\omega_2,u_2,f_2)$, such that
  \ben\label{eq:estimate_energy_char}
  \begin{split}	
	\|y_1-y_2\|_{L_\infty(H^1)} + \|y_1-y_2\|_{L_2(H^2)} + \|\partial_t y_1-\partial_t  y_2\|_{L_2(L_2)} \\
	\le  c(\|\chi_{\omega_1}u_1 - \chi_{\omega_2}u_2\|_{L_2(L_2)}+\|f_1-f_2\|_{H^1}) .
	\end{split}
  \een
\end{lemma}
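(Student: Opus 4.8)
The plan is to use the linearity of the heat operator to reduce the claim to the a priori regularity estimate \eqref{eq:estimate_apriori}. Note first that, for any fixed measurable actuator set, the system \eqref{eq:state_per}--\eqref{eq:state_per3} has the same structure as \eqref{eq:state_system}, so each $y_i$ ($i=1,2$) is the weak solution of the heat equation with source $\chi_{\omega_i} u_i$, homogeneous Dirichlet boundary data, and initial value $f_i$.

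First I would pass to the difference $w := y_1 - y_2 \in W(0,T)$. Subtracting the weak formulations \eqref{eq:weak_form} for the data $(u_1,f_1,\omega_1)$ and $(u_2,f_2,\omega_2)$ shows that $w$ satisfies $w(0) = f_1 - f_2$ and
\[
\langle \partial_t w, \varphi\rangle_{H^{-1},H^1_0} + \int_{\DD} \nabla w \cdot \nabla \varphi \; dx = \int_{\DD} g\, \varphi\;dx \qquad \text{for all } \varphi \in H^1_0(\DD),\ \text{a.e.\ } t\in(0,T],
\]
with $g := \chi_{\omega_1} u_1 - \chi_{\omega_2} u_2$. Since $u_i \in \UU \subset L_2(0,T;L_2(\DD))$ and $|\chi_{\omega_i}|\le 1$, we have $g \in L_2(0,T;L_2(\DD))$ and $\|g\|_{L_2(L_2)} = \|\chi_{\omega_1}u_1 - \chi_{\omega_2}u_2\|_{L_2(L_2)}$; moreover $f_1 - f_2 \in H^1_0(\Omega)$. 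Hence $w$ is precisely the weak solution of \eqref{eq:state_system} associated with the right-hand side $g$ (in place of $\chi_\omega u$) and the initial condition $f_1 - f_2$.

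The second (and final) step is to apply the estimate \eqref{eq:estimate_apriori} to $w$. As is clear from its source \cite[p.~360, Theorem~5]{EV98}, \eqref{eq:estimate_apriori} holds verbatim with $\chi_\omega u$ replaced by an arbitrary element of $L_2(0,T;L_2(\DD))$ and with a constant $c$ that does not depend on the data --- indeed $c$ was already declared there to be independent of $\omega$, $f$ and $u$. Applied to $w$ this yields
\[
\|w\|_{L_\infty(H^1)} + \|w\|_{L_2(H^2)} + \|\partial_t w\|_{L_2(L_2)} \le c\big(\|g\|_{L_2(L_2)} + \|f_1 - f_2\|_{H^1}\big),
\]
which is exactly \eqref{eq:estimate_energy_char} once we recall $w = y_1 - y_2$ and the identity for $\|g\|_{L_2(L_2)}$ above.

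There is no real difficulty in this argument: it is the standard linearity trick for linear parabolic equations. The only point requiring a moment's care is the one just flagged --- that \eqref{eq:estimate_apriori} must be invoked with a generic $L_2(L_2)$ forcing term rather than with the special form $\chi_\omega u$, and with a constant independent of the data. Both are part of the cited maximal-regularity result, so no additional estimate is needed.
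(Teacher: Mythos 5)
Your proof is correct and is essentially identical to the paper's argument: form the difference $y_1-y_2$, observe that it solves the heat equation with source $\chi_{\omega_1}u_1-\chi_{\omega_2}u_2$, zero boundary data and initial value $f_1-f_2$, and apply the a priori estimate \eqref{eq:estimate_apriori}. Your extra remark that \eqref{eq:estimate_apriori} is valid for a general $L_2(L_2)$ forcing term is the right (and only) point of care, and it is implicitly used in the paper as well.
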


\begin{proof}
	The difference $\tilde y := y_1 - y_1$ satisfies in a variational sense
	\begin{alignat}{2}
		\partial_t \tilde y   - \Delta   \tilde y = u_1\chi_{\omega_1} - u_2\chi_{\omega_2} &  & &\quad \text{ in } \DD \times (0,T], \\
		\tilde y=0 &  & &\quad \text{ on }\partial \DD\times (0,T],\\
		\tilde y(0)=f_1-f_2 &  & &\quad \text{ on } \DD.
	\end{alignat}
	Hence estimate \eqref{eq:estimate_energy_char} follows from \eqref{eq:estimate_apriori}.
\end{proof}

As an immediate consequence of Lemma~\ref{lem:estimate_chi} we obtain the following result.
\begin{lemma}\label{eq:state_pert}
	Let $\omega\in \mathfrak Y(\DD)$ be given. For all $\tau_n\in(0,\tau_X]$, $u_n,u\in \UU$ and $f_n,f\in H^1(\Omega_0)$ satisfying
	\ben
	u_n \rightharpoonup u \quad\text{ in } L_2(0,T;L_2(\DD)), \quad f_n \rightharpoonup f \quad \text{ in } {H^1_0(\Omega)}, \quad \tau_n\rightarrow 0, \quad \text{ as } n\rightarrow \infty,
	\een
	we have
	\ben\label{eq:convergence_yn}
	\begin{split}
		y^{u_n,f_n,\tau_n} \overset{*}{\rightharpoonup} & y^{u,f, \omega } \quad\text{ in } L_\infty(0,T;H^1_0(\DD)) \quad \text{ as } n\rightarrow \infty,\\
		y^{u_n,f_n,\tau_n} \rightharpoonup & y^{u,f, \omega } \quad\text{ in }  H^1(0,T;L_2(\DD))\quad \text{ as } n\rightarrow \infty.
	\end{split}
	\een
\end{lemma}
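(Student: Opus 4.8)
The plan is a routine compactness-and-identification argument carried out in the pulled-back variational formulation \eqref{eq:state_per_weak2_}, driven by the uniform bound of Lemma~\ref{lem:bounds_y}. First I would note that weakly convergent sequences are bounded, so $\sup_n\big(\|\chi_\omega u_n\|_{L_2(L_2)}+\|f_n\|_{H^1}\big)<\infty$; hence by \eqref{eq:estimate_apriori3} the family $(y^{u_n,f_n,\tau_n})$ is bounded in $L_\infty(0,T;H^1_0(\DD))\cap H^1(0,T;L_2(\DD))$. Extracting a subsequence (not relabelled), I obtain a common limit $z$ with $y^{u_n,f_n,\tau_n}\overset{*}{\rightharpoonup}z$ in $L_\infty(0,T;H^1_0(\DD))$ and $y^{u_n,f_n,\tau_n}\rightharpoonup z$ in $H^1(0,T;L_2(\DD))$; in particular $\nabla y^{u_n,f_n,\tau_n}\rightharpoonup\nabla z$ and $\partial_t y^{u_n,f_n,\tau_n}\rightharpoonup\partial_t z$ in $L_2(0,T;L_2(\DD))$.

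Second, I would pass to the limit in \eqref{eq:state_per_weak2_} written for $(\tau_n,u_n,f_n)$ against a fixed $\varphi\in W(0,T)$. By Lemma~\ref{lem:phit}(i) one has $\xi(\tau_n)\to 1$ and $A(\tau_n)\to I$ strongly in $L_\infty(\DD)$, whence $\xi(\tau_n)\varphi\to\varphi$, $A(\tau_n)\nabla\varphi\to\nabla\varphi$ and $\xi(\tau_n)\chi_\omega\varphi\to\chi_\omega\varphi$ strongly in $L_2(0,T;L_2(\DD))$. Pairing each strongly convergent factor with the corresponding weakly convergent one ($\partial_t y^{u_n,f_n,\tau_n}$, $\nabla y^{u_n,f_n,\tau_n}$, and $u_n$, respectively) passes the limit term by term, so $z$ solves the weak form \eqref{eq:weak_form} of \eqref{eq:state_system} with data $(\omega,u,f)$. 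For the initial condition I use $y^{u_n,f_n,\tau_n}(0)=f_n\circ T_{\tau_n}$ and the splitting $f_n\circ T_{\tau_n}-f=(f_n-f)\circ T_{\tau_n}+(f\circ T_{\tau_n}-f)$: the first summand tends to $0$ in $L_2(\DD)$ by a change of variables together with $f_n\to f$ in $L_2(\Omega)$ (Rellich, since $\Omega$ is bounded), the second by Lemma~\ref{lem:phit}(ii). As the evaluation $y\mapsto y(0)$ is a bounded, hence weakly continuous, linear map on $H^1(0,T;L_2(\DD))$, we get $y^{u_n,f_n,\tau_n}(0)\rightharpoonup z(0)$, and comparing with the strong limit $f$ yields $z(0)=f$. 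By uniqueness of the weak solution, $z=y^{u,f,\omega}$.

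Third, since the limit $y^{u,f,\omega}$ does not depend on the extracted subsequence, the usual subsequence principle upgrades the convergence to the full sequence, which is \eqref{eq:convergence_yn}. The only steps needing care beyond bookkeeping with the bounds of Lemmas~\ref{lem:bounds_y} and \ref{lem:estimate_chi} are the strong-times-weak passages to the limit involving the perturbed coefficients $A(\tau_n),\xi(\tau_n)$ and the handling of the moved initial datum $f_n\circ T_{\tau_n}$; neither is a serious obstacle. As an alternative to the second and third steps one may combine the identity \eqref{eq:equivalence} with Lemma~\ref{lem:continuity_f}: since $u_n\circ T_{\tau_n}^{-1}\rightharpoonup u$ in $L_2(0,T;L_2(\DD))$ (change of variables plus Lemma~\ref{lem:phit}) and $\omega_{\tau_n}\to\omega$ in $\mathfrak Y(\DD)$ (as $\chi_{\omega_{\tau_n}}=\chi_\omega\circ T_{\tau_n}^{-1}\to\chi_\omega$ in $L_1(\DD)$), that lemma gives $y^{u_n\circ T_{\tau_n}^{-1},f_n,\omega_{\tau_n}}\to y^{u,f,\omega}$ strongly in $L_2(0,T;H^1_0(\DD))$, and composing with $T_{\tau_n}$ preserves strong $L_2(0,T;L_2(\DD))$ convergence, which again identifies the weak-$*$ limit $z$ obtained from the a priori bound.
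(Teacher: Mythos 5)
Your proposal is correct and follows essentially the same route as the paper's proof: uniform bounds from Lemma~\ref{lem:bounds_y}, subsequence extraction, passage to the limit in the pulled-back weak form \eqref{eq:state_per_weak2_} using the strong $L_\infty$ convergence $\xi(\tau_n)\to 1$, $A(\tau_n)\to I$ from Lemma~\ref{lem:phit}, identification of the initial datum via $f_n\circ T_{\tau_n}\to f$ in $L_2$, and uniqueness to upgrade to the whole sequence. Your treatment of the initial condition (splitting plus weak continuity of evaluation at $t=0$ on $H^1(0,T;L_2)$) merely makes explicit what the paper leaves implicit, and the alternative argument via \eqref{eq:equivalence} and Lemma~\ref{lem:continuity_f} is a valid but inessential addition.
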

\begin{proof}
	Thanks to the apriori estimates of Lemma~\ref{lem:bounds_y} there exists $y\in \\ L_\infty(0,T;H^1_0(\DD)) \cap H^1(0,T;L_2(\DD))$ and a subsequence $(y^{u_{n_k},f_{n_k},\tau_{n_k}}) $ converging \\
weakly-star in $L_\infty(0,T;H^1_0(\DD))$ and weakly in  $H^1(0,T;L_2(\DD))$ to $y$.  Since $H^1(\DD)$ embeds compactly into $L^2(\Omega)$ we may assume, extracting another subsequence, that $f_{n_k}\rightarrow f$ in $L_2(\DD)$ as $k\to \infty$. By definition
$y_k := y^{u_{n_k},f_{n_k},\tau_{n_k}}$ satisfies for $k\ge 0$,
\begin{alignat}{2}\label{eq:state_per_weak}
		\int_{\DD_T} \xi(\tau_{n_k})\partial_t y_k\varphi + A(\tau_{n_k})\nabla y_k\cdot \nabla \varphi\;dx\,dt = \int_{\DD_T} \xi(\tau_{n_k}) \chi_{\omega} u_{n_k}\varphi \;dx\,dt &  & &,
	\end{alignat}
for all $\varphi \in W(0,T)$, and $y_k(0)=\y_{n_k}\circ T_{\tau_{n_k}}$ on $\DD$. Using the weak convergence of $u_{n_k},y_k$ stated before and the strong convergence obtained using Lemma~\ref{lem:phit},
\ben
\xi(\tau_n) \rightarrow 1 \quad \text{ in } L_\infty(\DD), \qquad  A(\tau_n) \rightarrow I \quad \text{ in } L_\infty(\DD,\R^{d\times d}),
\een
we may pass to the limit in \eqref{eq:state_per_weak} to obtain,
	\begin{alignat}{2}\label{eq:state_per_weak4}
		\int_{\DD_T} \partial_t y\varphi + \nabla y\cdot \nabla \varphi\;dx\,dt = \int_{\DD_T} \chi_{\omega} u\varphi \;dx\,dt &  & & \quad \text{ for all } \varphi \in W(0,T).
	\end{alignat}
Using Lemma~\ref{lem:phit} we see
	$f_{n_k}\circ T_{\tau_{n_k}}\rightarrow f$ in $L_2(\DD)$ as $k\rightarrow \infty$, and therefore $y(0)=f$.  Since the previous equation with $y(0)=\y$ admits a unique solution we conclude that  $y=y^{u,f,\omega}$. As a consequence of the uniqueness of the limit, the whole sequence $y^{u_n,f_n,\tau_n}$ converges to
	$y^{u,f,\omega}$. This finishes the proof.
\end{proof}

\subsection{Sensitivity of minimisers and maximisers}
Let us denote for $(\tau,\y)\in [0,\tau_X]\times K$ the  minimiser of  $u\mapsto J(\omega_\tau,u \circ T_\tau^{-1},\y)$,
by $\bar u^{\y_n,\tau_n}$.

\begin{lemma}\label{lem:convergence_un_shape}
	For every null-sequence $(\tau_n)$ in  $[0,\tau_X]$ and every sequence $(\y_n)$ in
	$K$ converging weakly (in ${H^1_0(\Omega)}$) to $\y\in K$, we have
	\ben
	\bar u^{\y_n,\tau_n} \to \bar u^{\y,\omega} \quad \text{ in } L_2(0,T;L_2(\DD)) \quad \text{ as } n\to \infty.
	\een
\end{lemma}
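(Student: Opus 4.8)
The plan is to mimic the proof strategy already used for Lemma~\ref{lem:continuity_u}, but now working with the transported equations \eqref{eq:state_per_weak1}--\eqref{eq:state_per_weak3} instead of the unperturbed state system. The starting point is the optimality / minimality characterization of $\bar u^{\y_n,\tau_n}$: by definition it minimises $u\mapsto J(\omega_{\tau_n},u\circ T_{\tau_n}^{-1},\y_n)$, and a change of variables (using $\det(\partial T_{\tau_n})$ and $A(\tau_n)$) rewrites this cost in terms of the transported state $y^{u,\y_n,\tau_n}$ defined in \eqref{eq:equivalence}. Concretely, for any competitor $u\in\UU$ one has
\[
\int_0^T \|\, \xi(\tau_n)^{1/2} y^{\bar u^{\y_n,\tau_n},\y_n,\tau_n}(t)\|_{L_2}^2 + \gamma\|\,\xi(\tau_n)^{1/2}\bar u^{\y_n,\tau_n}(t)\|_{L_2}^2\,dt
\le \int_0^T \|\,\xi(\tau_n)^{1/2} y^{u,\y_n,\tau_n}(t)\|_{L_2}^2 + \gamma\|\,\xi(\tau_n)^{1/2} u(t)\|_{L_2}^2\,dt.
\]

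First I would establish boundedness of $(\bar u^{\y_n,\tau_n})$ in $L_2(0,T;L_2(\DD))$: since $0\in\mathcal U$ (and $0\circ T_{\tau_n}^{-1}=0$), the competitor $u=0$ together with the a-priori bound \eqref{eq:estimate_apriori3} of Lemma~\ref{lem:bounds_y} and the uniform bounds $c_2\le\xi(\tau_n)\le C$ gives a bound depending only on $\|\y_n\|_{H^1}\le$ const. Hence, up to a subsequence, $\bar u^{\y_n,\tau_n}\rightharpoonup\bar u$ weakly in $L_2(0,T;L_2(\DD))$, and since $\mathcal U$ is closed and convex (hence weakly closed) we get $\bar u\in\UU$. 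Next, I would apply Lemma~\ref{eq:state_pert} with $u_n:=\bar u^{\y_n,\tau_n}$ to conclude $y^{\bar u^{\y_n,\tau_n},\y_n,\tau_n}\rightharpoonup y^{\bar u,\y,\omega}$ weakly-star in $L_\infty(0,T;H^1_0)$ and weakly in $H^1(0,T;L_2)$; together with the compact embedding this upgrades to strong convergence in $L_2(0,T;L_2(\DD))$. For an arbitrary fixed competitor $u\in\UU$, Lemma~\ref{eq:state_pert} applied to the constant sequence $u_n\equiv u$ gives $y^{u,\y_n,\tau_n}\to y^{u,\y,\omega}$; combined with $\xi(\tau_n)\to1$ in $L_\infty$ and weak lower semicontinuity of the $L_2$-norm for the $\bar u^{\y_n,\tau_n}$ term, passing to the $\liminf$ in the minimality inequality above yields
\[
\int_0^T \|y^{\bar u,\y,\omega}(t)\|_{L_2}^2 + \gamma\|\bar u(t)\|_{L_2}^2\,dt \le \int_0^T \|y^{u,\y,\omega}(t)\|_{L_2}^2 + \gamma\|u(t)\|_{L_2}^2\,dt
\]
for all $u\in\UU$, so $\bar u=\bar u^{\y,\omega}$ by uniqueness of the minimiser, and the whole sequence converges weakly.

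To promote weak convergence to strong convergence I would argue as in Lemma~\ref{lem:continuity_u}: take $u=\bar u^{\y,\omega}$ as competitor, which gives $\limsup_n J(\omega_{\tau_n},\bar u^{\y_n,\tau_n}\circ T_{\tau_n}^{-1},\y_n)\le J(\omega,\bar u^{\y,\omega},\y)$; on the other hand the strong convergence of the states together with $\xi(\tau_n)\to1$ forces $\int_0^T\|\xi^{1/2}y^{\bar u^{\y_n,\tau_n},\y_n,\tau_n}\|_{L_2}^2\,dt\to\int_0^T\|y^{\bar u^{\y,\omega},\y,\omega}\|_{L_2}^2\,dt$, so the remaining control term $\gamma\int_0^T\|\xi(\tau_n)^{1/2}\bar u^{\y_n,\tau_n}\|_{L_2}^2\,dt$ has a $\limsup$ bounded by $\gamma\int_0^T\|\bar u^{\y,\omega}\|_{L_2}^2\,dt$. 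Using $\xi(\tau_n)\to1$ uniformly and weak lower semicontinuity from the other direction pins down $\|\bar u^{\y_n,\tau_n}\|_{L_2(0,T;L_2)}\to\|\bar u^{\y,\omega}\|_{L_2(0,T;L_2)}$; norm convergence plus weak convergence in the Hilbert space $L_2(0,T;L_2(\DD))$ gives strong convergence. A standard subsequence-of-subsequence argument removes the need to pass to subsequences.

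The main obstacle I anticipate is bookkeeping with the weight $\xi(\tau_n)$: the cost functional controlling $\bar u^{\y_n,\tau_n}$ is the \emph{transported} cost with weights $\xi(\tau_n)$ and $A(\tau_n)$, not the plain $J$, so one must carefully track how these weights enter the minimality inequality and verify that $\xi(\tau_n)\to1$ strongly enough (in $L_\infty$, by Lemma~\ref{lem:phit}(i)) that the weighted and unweighted norms are asymptotically interchangeable both for the $\liminf$ (lower bound) and for the $\limsup$ (upper bound, needed for the norm-convergence step). A secondary technical point is ensuring that the competitor transformation $u\mapsto u\circ T_{\tau_n}^{-1}$ stays admissible, which is exactly Assumption~\ref{ass:cal_U}, and that $y^{u,\y_n,\tau_n}$ is the correct transported state so that Lemma~\ref{eq:state_pert} applies verbatim; both are available from the preceding development.
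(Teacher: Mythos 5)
Your argument is correct, but it follows a genuinely different route from the paper. The paper's proof is a three-line reduction: by definition of the reparametrised minimiser one has the exact transport identity $\bar u^{\y_n,\tau_n} = \bar u^{\y_n,\omega_{\tau_n}}\circ T_{\tau_n}$, and since $\chi_{\omega_{\tau_n}}\to\chi_\omega$ in $L_1(\DD)$, the already-established continuity result Lemma~\ref{lem:continuity_u} gives $\bar u^{\y_n,\omega_{\tau_n}}\to\bar u^{\y,\omega}$ in $L_2(0,T;L_2(\DD))$; composing with $T_{\tau_n}$ and using Lemma~\ref{lem:phit}(ii) (plus the uniform Jacobian bounds to handle the composed difference) yields the claim. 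You instead replay the direct-method proof of Lemma~\ref{lem:continuity_u} entirely in the transported coordinates: minimality of $\bar u^{\y_n,\tau_n}$ for the weighted cost with $\xi(\tau_n)$ and $A(\tau_n)$, boundedness via the zero competitor and \eqref{eq:estimate_apriori3}, weak compactness and weak closedness of $\UU$, passage to the limit via Lemma~\ref{eq:state_pert} and Aubin--Lions, identification of the limit by uniqueness, and the standard norm-convergence upgrade with the competitor $u=\bar u^{\y,\omega}$. Both arguments are sound and rest on the same ingredients (Assumption~\ref{ass:cal_U}, the a priori bounds, $\xi(\tau_n)\to 1$ and $A(\tau_n)\to I$ in $L_\infty$). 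What the paper's route buys is brevity and reuse of Lemma~\ref{lem:continuity_u} verbatim; what your route buys is that it never invokes the identity $\bar u^{\y_n,\tau_n} = \bar u^{\y_n,\omega_{\tau_n}}\circ T_{\tau_n}$ and it sidesteps the small subtlety the paper glosses over, namely that Lemma~\ref{lem:phit}(ii) is stated for a fixed $L_2$ function and must be combined with the uniform bound on $\det(\partial T_{\tau_n})$ to handle the composition of a strongly convergent sequence with $T_{\tau_n}$; your bookkeeping of the weights $\xi(\tau_n)$ in the liminf/limsup steps is the correct price for that.
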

\begin{proof}
	We set $\omega_n := \omega_{\tau_n}$. By definition we have $\bar u^{\y_n,\tau_n} = \bar u^{\y_n,\omega_{\tau_n}}\circ T_{\tau_n}$. From Lemma~\ref{lem:continuity_u} we know that
	$ \bar u^{\y_n,\omega_{\tau_n}} $ converges to $\bar u^{\y_n,\omega}$ in $L_2(0,T;L_2(\DD))$. Therefore according to Lemma~\ref{lem:phit} also $\bar u^{\y_n,\omega_{\tau_n}} \circ T_{\tau_n}$ converges in $L_2(0,T;L_2(\DD))$ to $\bar u^{\y_n,\omega}$. This finishes the proof.
\end{proof}

\begin{lemma}\label{lem:convergence_un_shape2}
	For every  null-sequence $(\tau_n)$ in  $[0,\tau_X]$ and every sequence $(\y_n)$, $\y_n\in \mathfrak{X}_2(\omega_{\tau_n})$, there is a subsequence $(\y_{n_k})$ and $\y\in  \mathfrak{X}_2(\omega)$, such that $\y_{n_k} \rightharpoonup \y$ in ${H^1_0(\Omega)}$ as $k\to \infty$.
\end{lemma}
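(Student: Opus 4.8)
The plan is to combine the uniform bound on maximisers coming from the constraint set $K$ with a diagonal/compactness argument and the continuity results already established. First I would note that each $\y_n \in \mathfrak X_2(\omega_{\tau_n}) \subset K$ satisfies $\|\y_n\|_{H^1_0(\Omega)} \le 1$ by definition of $\mathfrak X_2$, so the sequence is bounded in $H^1_0(\Omega)$; hence there is a subsequence $(\y_{n_k})$ converging weakly in $H^1_0(\Omega)$ to some $\y$. Since $K$ is weakly closed, $\y \in K$, and by weak lower semicontinuity of the norm $\|\y\|_{H^1_0(\Omega)} \le \liminf_k \|\y_{n_k}\|_{H^1_0(\Omega)} \le 1$, so $\y$ is admissible for the supremum defining $\mathcal J_2(\omega)$.

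The remaining task is to show $\y \in \mathfrak X_2(\omega)$, i.e. that $\y$ actually attains $\mathcal J_2(\omega) = \sup_{g\in K,\|g\|\le 1}\mathcal J_1(\omega,g)$. I would prove the two inequalities separately. For the lower bound $\mathcal J_1(\omega,\y) \ge \mathcal J_2(\omega)$: using Lemmas~\ref{lem:convergence_un_shape} and \ref{eq:state_pert} (with $u_n := \bar u^{\y_{n_k},\tau_{n_k}}$, which converges strongly in $L_2(0,T;L_2(\DD))$ to $\bar u^{\y,\omega}$, and $\y_{n_k}\rightharpoonup \y$), we get $y^{\bar u^{\y_{n_k},\tau_{n_k}},\y_{n_k},\tau_{n_k}} \to y^{\bar u^{\y,\omega},\y,\omega}$ in $L_2(0,T;H^1_0(\DD))$, hence the cost values converge:
\[
\mathcal J_1(\omega_{\tau_{n_k}},\y_{n_k}) = \int_0^T \|y^{\bar u^{\y_{n_k},\tau_{n_k}},\y_{n_k},\tau_{n_k}}(t)\|_{L_2}^2 + \gamma\|\bar u^{\y_{n_k},\tau_{n_k}}(t)\|_{L_2}^2\,dt \longrightarrow \mathcal J_1(\omega,\y).
\]
(Here I use that $J$ is invariant under the change of variables $T_{\tau}$ only up to the weights $\xi,A$, but the point is that the \emph{value} $\mathcal J_1(\omega_{\tau_n},\y_n) = \min_u J(\omega_{\tau_n},u,\y_n)$ equals the cost of the transported pair and passes to the limit.) On the other hand, for any fixed admissible $g\in K$ with $\|g\|_{H^1_0(\Omega)}\le1$, since $\y_{n_k}\in\mathfrak X_2(\omega_{\tau_{n_k}})$ we have $\mathcal J_1(\omega_{\tau_{n_k}},\y_{n_k}) \ge \mathcal J_1(\omega_{\tau_{n_k}},g)$, and the right-hand side converges to $\mathcal J_1(\omega,g)$ again by Lemmas~\ref{lem:continuity_u} and \ref{lem:continuity_f} (with a constant sequence $g_n \equiv g$ and $\omega_{\tau_{n_k}}\to\omega$). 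Passing to the limit yields $\mathcal J_1(\omega,\y) \ge \mathcal J_1(\omega,g)$ for all such $g$, hence $\mathcal J_1(\omega,\y) \ge \mathcal J_2(\omega)$. Since $\y$ is itself admissible, the reverse inequality $\mathcal J_1(\omega,\y)\le\mathcal J_2(\omega)$ is trivial, so $\mathcal J_1(\omega,\y)=\mathcal J_2(\omega)$ and $\y\in\mathfrak X_2(\omega)$.

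The main obstacle I anticipate is bookkeeping the change of variables: $\mathfrak X_2(\omega_{\tau_n})$ is defined in terms of the \emph{untransported} problem on $\omega_{\tau_n}$, whereas Lemmas~\ref{lem:convergence_un_shape} and \ref{eq:state_pert} are phrased in terms of the transported state $y^{u,f,\tau}$ and control $\bar u^{\y_n,\tau_n} = \bar u^{\y_n,\omega_{\tau_n}}\circ T_{\tau_n}$. One must check carefully that $\mathcal J_1(\omega_{\tau_n},\y_n) = J(\omega_{\tau_n}, \bar u^{\y_n,\omega_{\tau_n}}, \y_n)$ converges to $\mathcal J_1(\omega,\y)$; the cleanest route is to avoid the transported formulation altogether for this particular limit and instead apply Lemmas~\ref{lem:continuity_u} and \ref{lem:continuity_f} directly with $\omega_n := \omega_{\tau_n}\to\omega$ in $\mathfrak Y(\DD)$ (note $\chi_{\omega_{\tau_n}}\to\chi_\omega$ in $L_1(\DD)$ since $\tau_n\to0$ and $X$ is Lipschitz), $\y_n\rightharpoonup\y$ in $H^1_0(\Omega)$, which gives both the strong convergence $\bar u^{\y_n,\omega_n}\to\bar u^{\y,\omega}$ and the convergence of the associated states and cost values without ever invoking $T_{\tau_n}$. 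With that simplification the argument is essentially a standard $\Gamma$-convergence-type stability-of-argmax computation.
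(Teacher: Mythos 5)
Your proof is correct and follows essentially the same route as the paper: extract a weakly convergent subsequence from the bounded maximisers, then pass to the limit in the maximality inequality $\mathcal J_1(\omega_{\tau_{n_k}},\y_{n_k})\ge \mathcal J_1(\omega_{\tau_{n_k}},g)$ using the continuity of optimal controls and states. The only (harmless) difference is bookkeeping: the paper phrases the limit passage through the transported quantities $\bar u^{\y_n,\tau_n}$ and $y^{u,\y,\tau}$ via Lemmas~\ref{lem:convergence_un_shape} and \ref{eq:state_pert}, whereas you apply Lemmas~\ref{lem:continuity_u} and \ref{lem:continuity_f} directly with $\omega_{\tau_n}\to\omega$ in $\mathfrak Y(\DD)$ — a fact the paper itself implicitly uses.
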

\begin{proof}
	We proceed similarly as in the proof of Lemma~\ref{lem:convergence_un_shape}. Let $\tau \in [0,\tau_X]$ and $v\in \UU$ be given. We obtain for all $\y \in K$,
\ben\label{eq:inf}
	J(\omega_\tau,u^{\y,\tau}\circ T^{-1}_\tau,\y)  = \inf_{u\in \UU} J(\omega_\tau,u\circ T^{-1}_\tau,\y) \le J(\omega_\tau, v\circ T^{-1}_\tau,\y).
\een
	Let $(\bar\y_n)$ be an arbitrary sequence with $ \bar\y_n \in \mathfrak{X}_2(\omega_{\tau_n})$. Since $\|\bar\y_n\|_{H^1_0(\Omega)}\le 1$ for all $n\ge 0$, there
	is a subsequence $(\bar\y_{n_k})$ and a function $\bar\y\in K$, such that $\bar\y_{n_k}\rightharpoonup \bar\y$ in ${H^1_0(\Omega)}$ as $k\to \infty$ and $\|\bar\y\|_{H^1_0(\Omega)}\le 1$. Thanks to Lemma~\ref{lem:convergence_un_shape} the sequence $(\bar u_k)$ defined by  $\bar u_k := \bar u^{\bar\y_{n_k},\tau_{n_k}}$ converges to $\bar u^{\bar\y,\omega}$ in $L_2(0,T;L_2(\DD))$. Moreover, Lemma~\ref{eq:state_pert} also shows that $y^{\bar u_k,\bar\y_{n_k},\tau_{n_k}} \to y^{\bar u^{\bar\y,\omega},\bar\y,\omega}$ in $L_2(0,T;L_2(\DD))$. By definition for all
	$k\ge 0$ and $ \y\in K$,
	\ben
	\begin{split}
	 & \int_{\DD_T} |y^{\bar u^{\y,\tau_{n_k}},\y,\tau_{n_k}}(t)|^2  + \gamma   | \bar u^{\y,\tau_{n_k}}(t)|^2 \;dx\,dt\\
    & \le  \sup_{\substack{f\in K \\ \|f\|_{H^1_0(\Omega)}\le 1 }}\int_{\DD_T} |y^{\bar u^{\y,\tau_{n_k}},\y,\tau_{n_k}}(t)|^2  + \gamma   | \bar u^{\y,\tau_{n_k}}(t)|^2 \;dx\,dt \\
	& = \int_{\DD_T} |y^{\bar u_k,\bar\y_{n_k},\tau_{n_k}}(t)|^2  + \gamma   |\bar u_k(t)|^2 \;dx\,dt
	\end{split}
	\een
	and therefore passing to the limit $k\to \infty$ yields, for all $\y\in K$,
	\ben
	\begin{split}
		\int_{\DD_T} |y^{\bar u^{\y,\omega},\y,\omega}(t)|^2  + \gamma   | \bar u^{\y,\omega}(t)|^2 \;dx\,dt & \le \int_{\DD_T} |y^{\bar u^{\bar\y,\omega},\bar\y,\omega}(t)|^2  + \gamma   | \bar u^{\bar\y,\omega}(t)|^2 \;dx\,dt.
	\end{split}
	\een
	This shows that $\y\in \mathfrak{X}_2(\omega)$ and finishes the proof.
\end{proof}

\subsection{Averaged adjoint equation and Lagrangian}
For fixed $\tau\in [0,\tau_X]$ the mapping  $\varphi \mapsto T_\tau^{-1}\circ \varphi$ is an isomorphism on  $\UU$,  therefore,
\ben\label{eq:repara}
\min_{u\in \UU} J(\omega_\tau,u,\y)  =  \min_{u\in \UU} J(\omega_\tau,u\circ T_\tau^{-1},\y).
\een
Hence a change of variables shows,
\ben\label{eq:j_1_tau}
\begin{split}
	\inf_{u\in \UU}J(\omega_\tau,u,\y) & = \inf_{u\in \UU}\int_0^T \|y^{u,\y,\omega_\tau}(t)\|_{L_2(\DD)}^2  + \gamma\| u(t)\|_{L_2(\DD)}^2\; dt \\
	& \stackrel{\eqref{eq:repara}}{=} \inf_{u\in \UU}\int_{\Omega_T} \xi(\tau)\left(|y^{u, \y, \tau}(t)|^2  + \gamma   |u(t)|^2 \right)\;dx \; dt.
\end{split}
\een
Introduce for every quadruple $(u,\y,y,p)\in \UU \times K\times W(0,T)\times W(0,T)$ and for every $\tau\in [0,\tau_X]$ the parametrised Lagrangian
\ben
\begin{split}
    \tilde{G}(\tau,u,\y,y,p)  := &  \int_{\DD_T} \xi(\tau)\left(|y|^2  + \gamma  | u|^2  \right)\;dx dt\\
	& + \int_{\DD_T} \xi(\tau)\, \partial_t y \,  p\; dx\,dt  + A(\tau)\nabla y \cdot \nabla p \; dx\,dt\\
	& -  \int_{\DD_T}\xi(\tau)u\chi_\omega p \;dx\,dt + \int_{\DD}\xi(\tau)(y(0)-\y\circ T_\tau)p(0)\;dx.
\end{split}
\een

\begin{definition}
	Given $(u,\y)\in \UU\times  K$, and $\tau\in [0,\tau_X]$, the averaged adjoint state $p^{u,f,\tau}\in W(0,T)$ is the solution of averaged adjoint equation
	\ben\label{eq:averaged}
    \int_0^1\partial_y {\tilde G}(\tau,u, \y ,sy^{u,f,\tau}+(1-s)y^{u,f,\omega},p^{u,f,\tau})(\varphi)\; ds =0\quad \text{ for all 	} \varphi \in W(0,T).
	\een
\end{definition}
\begin{remark}
	The averaged adjoint state $p^{u,f,\tau}$ in our special case only depends on $u$ and $\y$ through
	the state $y^{u,f,\tau}$.
\end{remark}

\noindent It is evident that \eqref{eq:averaged} is equivalent to
\ben
\begin{split}
	\int_{\DD_T} \xi(\tau)\partial_t \varphi    p^{u, f, \tau} + A(\tau)\nabla \varphi \cdot \nabla  p^{u, f,\tau}\; dx\,dt  &+ \int_\DD \xi(\tau) p^{u, f,\tau}(0)\varphi(0)\;dx \\
	& =  -\int_{\DD_T}   \xi(\tau)(y^{u, f,\tau} + y^{u, f,\omega}) \varphi \; dx\,dt
\end{split}
\een
for all $\varphi\in W(0,T)$, or equivalently after partial integration in time
\ben\label{eq:averaged_adjoint}
\begin{split}
	\int_{\DD_T}-\xi(\tau) \varphi    \partial_t p^{u,f,\tau}  +  A(\tau)\nabla \varphi \cdot \nabla  p^{u,f,\tau} \; dx\;dt  & =  -\int_{\DD_T}  \xi(\tau)(y^{u,f,\tau} + y^{u,f,\omega}) \varphi \; dx\;dt
\end{split}
\een
for all $\varphi\in W(0,T)$, and $p^{u,f,\tau}(T) = 0$.
This is a backward in time linear parabolic equation with terminal condition zero.

\subsection{Differentiability of max-min functions}

Before we can pass to the proof of Theorem \ref{thm:shape_derivative_J2} we need to address a Danskin type theorem on the differentiability of max-min functions.

Let $\mathfrak U$ and $\mathfrak V$ be two nonempty sets and let $ G:[0,\tau]\times \mathfrak U\times \mathfrak V \to \R$ be a function, $\tau >0$. Introduce the function $g:[0,\tau] \to \R$,
\ben
g(t) := \sup_{y\in \mathfrak V} \inf_{x\in \mathfrak U}  G(t,x,y)
\een
and let $\ell:[0,\tau]\rightarrow \R$ be any function such that $\ell(t)>0$ for $t\in (0,\tau]$ and $\ell(0)=0$. We are interested in sufficient conditions that guarantee that the limit
\ben
\frac{d}{d\ell} g(0^+) := \lim_{t\searrow 0^+} \frac{g(t)-g(0)}{\ell(0)}
\een
exists. Moreover we define for $t\in [0,\tau]$,
\ben
\mathfrak V(t) := \{y^t\in \mathfrak V: \sup_{y\in \mathfrak V}\inf_{x\in \mathfrak U}  G(t,x,y) = \inf_{x\in \mathfrak U}  G(t,x,y^t) \}.
\een

\begin{lemma}\label{lem:maxmin}
	Let the following hypotheses be satisfied.
	\begin{itemize}
		\item[(A0)] For all $y\in \mathfrak V$ and $t\in [0,\tau]$ the minimisation problem
		\ben
		\inf_{x\in \mathfrak U}  G(t,x,y)
		\een
		admits a unique solution and we denote this solution by $x^{t,y}$.
		\item[(A1)] For all $t$ in $[0,\tau]$  the set $\mathfrak V(t)$  is nonempty.
		\item[(A2)]
			The limits
			\ben\label{eq:max_min_a1}
			\lim_{t\searrow 0} \frac{G(t,x^{t,y},y)-G(0,x^{t,y},y)}{\ell(t)}
			\een
			and
			\ben\label{eq:max_min_a2}
			\lim_{t\searrow 0} \frac{G(t,x^{0,y},y)-G(0,x^{0,y},y)}{\ell(t)}
			\een
			exist for all $y\in \mathfrak U$ and they are equal. We denote the limit by \\
			 $\partial_\ell G(0^+,x^{0,y},y)$.

		\item[(A3)] For all real null-sequences $(t_n)$ in $(0,\tau]$ and all sequences $y^{t_n}$ in $\mathfrak V(t_n)$, there exists a subsequence $(t_{n_k})$ of $(t_n)$,  and $(y^{t_{n_k}})$ of $(y^{t_n})$, and $y^0$ in $\mathfrak V(0)$, such that
		\ben\label{eq:max_min_b1}
		\lim_{k\rightarrow  \infty} \frac{G( t_{n_k} ,x^{t_{n_k},y^{t_{n_k}}}, y^{t_{n_k}} ) -
			G(0,x^{t_{n_k},y^{t_{n_k}}}, y^{t_{n_k}})}{\ell(t_{n_k})} = \partial_\ell G(0^+,x^{0,y^0},y^0)
		\een	
		and
		\ben\label{eq:max_min_b2}
		\lim_{k\rightarrow  \infty} \frac{G( t_{n_k} ,x^{0,y^{t_{n_k}}}, y^{t_{n_k}} ) -
			G(0,x^{0,y^{t_{n_k}}}, y^{t_{n_k}})}{\ell(t_{n_k})} = \partial_\ell G(0^+,x^{0,y^0},y^0).
		\een
	\end{itemize}
	Then we have
	\ben
	\frac{d}{d\ell}g(t)|_{t=0^+} = \max_{y\in  \mathfrak V(0) }\partial_\ell G(0^+,x^{0,y},y).
	\een
\end{lemma}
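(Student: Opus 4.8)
\textbf{Proof plan for Lemma~\ref{lem:maxmin}.}

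The plan is to establish matching lower and upper bounds for the limit $\liminf$ and $\limsup$ of the difference quotient $(g(t)-g(0))/\ell(t)$, showing both equal $\max_{y\in\mathfrak V(0)}\partial_\ell G(0^+,x^{0,y},y)$. Throughout write $M := \max_{y\in\mathfrak V(0)}\partial_\ell G(0^+,x^{0,y},y)$; note by (A2) and (A1) this is well-defined once we know the max is attained, which follows from the compactness argument in (A3) applied with the constant sequence $t_n\equiv 0$ (or we treat the supremum and extract the maximiser at the end). For the \emph{lower bound} I would fix an arbitrary $y^0\in\mathfrak V(0)$. Using $g(t)\ge \inf_{x\in\mathfrak U}G(t,x,y^0)$ and $g(0)=\inf_{x\in\mathfrak U}G(0,x,y^0)=G(0,x^{0,y^0},y^0)$ (by (A0)), together with $\inf_{x\in\mathfrak U}G(t,x,y^0)=G(t,x^{t,y^0},y^0)\le G(t,x^{0,y^0},y^0)$, I get the one-sided estimate
\ben\label{eq:maxmin_lower}
\frac{g(t)-g(0)}{\ell(t)} \;\ge\; \frac{G(t,x^{t,y^0},y^0)-G(0,x^{t,y^0},y^0)}{\ell(t)} \;\ge\; \frac{G(t,x^{t,y^0},y^0)-G(0,x^{0,y^0},y^0)}{\ell(t)}.
\een
Wait — I must be careful with signs: the correct chain is $g(t)-g(0)\ge G(t,x^{t,y^0},y^0)-G(0,x^{0,y^0},y^0)\ge G(t,x^{t,y^0},y^0)-G(0,x^{t,y^0},y^0)$, using $G(0,x^{0,y^0},y^0)\le G(0,x^{t,y^0},y^0)$ since $x^{0,y^0}$ minimises $G(0,\cdot,y^0)$. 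Dividing by $\ell(t)>0$ and passing to the limit via (A2), equation \eqref{eq:max_min_a1}, yields $\liminf_{t\searrow0}(g(t)-g(0))/\ell(t)\ge \partial_\ell G(0^+,x^{0,y^0},y^0)$, and taking the supremum over $y^0\in\mathfrak V(0)$ gives $\liminf \ge M$.

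For the \emph{upper bound}, take a null-sequence $(t_n)$ realising the $\limsup$, and for each $n$ pick $y^{t_n}\in\mathfrak V(t_n)$, which exists by (A1). Then $g(t_n)=\inf_x G(t_n,x,y^{t_n})=G(t_n,x^{t_n,y^{t_n}},y^{t_n})$, while $g(0)\ge \inf_x G(0,x,y^{t_n})$ — no, I need the other direction for the upper bound. Reconsider: $g(0)=\sup_y\inf_x G(0,x,y)\ge \inf_x G(0,x,y^{t_n}) = G(0,x^{0,y^{t_n}},y^{t_n})$, and also $\inf_x G(t_n,x,y^{t_n})\le G(t_n,x^{0,y^{t_n}},y^{t_n})$. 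Hence $g(t_n)-g(0)\le G(t_n,x^{0,y^{t_n}},y^{t_n})-G(0,x^{0,y^{t_n}},y^{t_n})$. Dividing by $\ell(t_n)$ and applying (A3), equation \eqref{eq:max_min_b2}, along a further subsequence, with $y^0\in\mathfrak V(0)$ the guaranteed limit point, gives $\limsup_{t\searrow0}(g(t)-g(0))/\ell(t)\le \partial_\ell G(0^+,x^{0,y^0},y^0)\le M$. Combining the two bounds shows the difference quotient converges to $M$, and the argument simultaneously shows $M$ is attained (the $y^0$ produced in the upper bound is a maximiser since $\liminf\ge M\ge\limsup\ge\partial_\ell G(0^+,x^{0,y^0},y^0)$, forcing equality).

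The main obstacle, and the reason hypotheses (A2)--(A3) are stated in their somewhat elaborate form, is the \emph{coupling between the moving minimiser and the moving maximiser}: in the lower bound the inner minimiser is evaluated at $x^{t,y^0}$ (varying with $t$) rather than at the fixed $x^{0,y^0}$, so one cannot simply differentiate $t\mapsto G(t,x^{0,y^0},y^0)$; this is exactly why (A2) demands that the two limits \eqref{eq:max_min_a1} and \eqref{eq:max_min_a2} exist \emph{and agree}, a Danskin/envelope-type condition that in the PDE application is verified using the averaged adjoint state. In the upper bound the difficulty compounds because \emph{both} arguments move with $n$, so one needs the sequential stability encoded in (A3): the difference quotients evaluated along the perturbed maximisers $y^{t_n}$ must still converge to the derivative at some limiting maximiser $y^0\in\mathfrak V(0)$. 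Once these two hypotheses are granted, the proof is just the sandwiching above; the only remaining care is bookkeeping of which point minimises which functional so that every inequality points the right way.
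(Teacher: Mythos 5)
Your proof is correct, and its core estimates coincide with the paper's: in both arguments the decisive inequalities are
\begin{equation*}
G(t,x^{t,y},y)-G(0,x^{t,y},y)\;\le\;\inf_{x\in\mathfrak U}G(t,x,y)-\inf_{x\in\mathfrak U}G(0,x,y)\;\le\;G(t,x^{0,y},y)-G(0,x^{0,y},y),
\end{equation*}
obtained from the minimising properties of $x^{0,y}$ and $x^{t,y}$. The structural difference is where the outer maximisation is handled. The paper does not run the $\liminf$/$\limsup$ sandwich on $g$ itself: it uses these estimates (its Steps 1 and 2) only to verify the hypotheses of an auxiliary Danskin-type lemma for pure maximum functions, Lemma~\ref{lem:danskin2}, cited from \cite{DEZO11} and not proved in the paper, applied to $\mathcal G(t,y):=\inf_{x\in\mathfrak U}G(t,x,y)$; that lemma then produces the derivative of $g$. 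You instead fold the outer maximisation directly into the sandwich — lower bound by freezing a maximiser $y^0\in\mathfrak V(0)$ and using \eqref{eq:max_min_a1}, upper bound along maximisers $y^{t_n}\in\mathfrak V(t_n)$ for a sequence realising the $\limsup$ and using \eqref{eq:max_min_b2} — which gives a self-contained proof and, as you observe, yields attainment of the supremum at the $y^0$ provided by (A3) as a by-product. One small correction: (A3) applies only to null-sequences in $(0,\tau]$, so it cannot be invoked ``with the constant sequence $t_n\equiv 0$'' to get attainment up front; your fallback of treating $M$ as a supremum and extracting the maximiser at the end is what your argument actually uses, so nothing is lost. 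The sign slip in the lower bound is correctly repaired in your final chain of inequalities.
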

In this section we  apply the previous results  for $\ell(t)=t,$ and in the following one  for $\ell(t) = |B_t(\eta_0)|$, $\eta_0\in \R^d$. For the sake of completeness we give a proof in the appendix; see \cite{DEMA75}.

\subsection{Proof of Theorem~\ref{thm:shape_derivative_J2}}

The following is a direct consequence of \eqref{eq:averaged_adjoint} and Lemma~\ref{eq:state_pert}.
\begin{lemma}\label{lem:averaged_shape}
	For all sequences $\tau_n\in(0,\tau_X]$, $u_n,u\in \UU$ and $f_n,f\in K$, such that
	\ben
	u_n \rightharpoonup u \quad\text{ in } \UU, \quad f_n \rightharpoonup f \quad \text{ in } H^1_0(\Omega), \quad \tau_n\to 0, \quad \text{ as } n\to \infty,
	\een
	we have
	\ben
	\begin{split}
		p^{u_n,f_n,\tau_n} \to & p^{u,f, \omega } \quad\text{ in } L_2(0,T;H^1_0(\DD)) \quad \text{ as } n\rightarrow \infty,\\
		p^{u_n,f_n,\tau_n} \rightharpoonup & p^{u,f, \omega } \quad\text{ in }  H^1(0,T;L_2(\DD))\quad \text{ as } n\rightarrow \infty,
	\end{split}
	\een
	where $p^{u,f,\omega}\in Z(0,T)$ solves the adjoint equation
	\ben\label{eq:adjoint_weak}
	\begin{split}
		\int_{\DD_T} - \varphi\partial_t p^{u,f,\omega}\; dx\,dt  + \int_{\DD_T}\nabla \varphi \cdot \nabla p^{u,f,\omega} \; dx\,dt = -\int_{\DD_T}2 y^{u,f,\omega}\varphi \; dx\,dt
	\end{split}
	\een
	for all $\varphi \in W(0,T)$, and $p^{u,f,\omega}(T) =0$ a.e. on $\DD$.
\end{lemma}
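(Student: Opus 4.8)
The plan is to mimic the compactness argument already used for Lemmas~\ref{lem:continuity_f}, \ref{lem:continuity_u} and \ref{eq:state_pert}: derive uniform a priori bounds for the perturbed averaged adjoint state, pass to a weak limit, identify that limit through the uniqueness of \eqref{eq:adjoint_weak}, and finally upgrade the weak $L_2(0,T;H^1_0(\DD))$ convergence to strong convergence by means of an energy identity for the difference.

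First I would note that, after the time reversal $s\mapsto T-s$, equation \eqref{eq:averaged_adjoint} is a well-posed forward parabolic problem with zero initial datum, symmetric and $t$-independent principal part $A(\tau_n)$ which — by Lemma~\ref{lem:phit}(i), since $A(\tau_n)\to I$ and $\xi(\tau_n)\to 1$ in $L_\infty(\DD)$ with $\xi(\tau_n)\ge c_2>0$ — is uniformly elliptic and uniformly bounded in $n$, and with right hand side $-\xi(\tau_n)(y^{u_n,f_n,\tau_n}+y^{u_n,f_n,\omega})$, bounded in $L_2(0,T;L_2(\DD))$ by Lemma~\ref{lem:bounds_y}. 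Testing with $p^{u_n,f_n,\tau_n}$ and with $\partial_t p^{u_n,f_n,\tau_n}$ (here one uses the symmetry and $t$-independence of $A(\tau_n)$ so that $\int A(\tau_n)\nabla p\cdot\nabla\partial_t p=\tfrac12\tfrac{d}{dt}\int A(\tau_n)\nabla p\cdot\nabla p$) gives, exactly as in Lemma~\ref{lem:bounds_y}, a bound for $p^{u_n,f_n,\tau_n}$ in $L_\infty(0,T;H^1_0(\DD))\cap H^1(0,T;L_2(\DD))$ that is independent of $n$. Hence, along a subsequence, $p^{u_n,f_n,\tau_n}\overset{*}{\rightharpoonup}\tilde p$ in $L_\infty(0,T;H^1_0(\DD))$, $\rightharpoonup\tilde p$ in $H^1(0,T;L_2(\DD))$, and, by the Aubin--Lions lemma, strongly in $L_2(0,T;L_2(\DD))$.

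To identify $\tilde p$, I would pass to the limit in \eqref{eq:averaged_adjoint}: for fixed $\varphi\in W(0,T)$ one uses $\xi(\tau_n)\varphi\to\varphi$ and $A(\tau_n)\nabla\varphi\to\nabla\varphi$ strongly in $L_2(0,T;L_2(\DD))$ (Lemma~\ref{lem:phit}(i)) against the weak $L_2(0,T;L_2(\DD))$ convergence of $\partial_t p^{u_n,f_n,\tau_n}$ and $\nabla p^{u_n,f_n,\tau_n}$, while on the right hand side $y^{u_n,f_n,\tau_n}+y^{u_n,f_n,\omega}\to 2y^{u,f,\omega}$ in $L_2(0,T;L_2(\DD))$ (by Lemma~\ref{eq:state_pert} together with Aubin--Lions for the first summand, and by Lemma~\ref{lem:continuity_f} for the second, with $\omega_n\equiv\omega$). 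Since $H^1(0,T;L_2(\DD))\hookrightarrow C([0,T];L_2(\DD))$ and evaluation at $T$ is weakly continuous, the terminal conditions $p^{u_n,f_n,\tau_n}(T)=0$ yield $\tilde p(T)=0$, so $\tilde p$ solves \eqref{eq:adjoint_weak} and, by standard parabolic regularity for this constant-coefficient equation, $\tilde p\in Z(0,T)$. Uniqueness of \eqref{eq:adjoint_weak} gives $\tilde p=p^{u,f,\omega}$, and as the limit is independent of the subsequence, the whole sequence converges in the stated topologies — in particular the weak convergence in $H^1(0,T;L_2(\DD))$.

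It remains to obtain strong convergence in $L_2(0,T;H^1_0(\DD))$. Writing $w_n:=p^{u_n,f_n,\tau_n}-p^{u,f,\omega}\in W(0,T)$ with $w_n(T)=0$, subtracting \eqref{eq:adjoint_weak} from \eqref{eq:averaged_adjoint} and testing with $\varphi=w_n$ gives
\ben
\tfrac12\int_{\DD}\xi(\tau_n)\,|w_n(0)|^2\,dx+\int_{\DD_T}A(\tau_n)\nabla w_n\cdot\nabla w_n\,dx\,dt=R_n(w_n),
\een
where $R_n(w_n)$ is the sum of $\int_{\DD_T}(\xi(\tau_n)-1)\,w_n\,\partial_t p^{u,f,\omega}$, $-\int_{\DD_T}(A(\tau_n)-I)\nabla w_n\cdot\nabla p^{u,f,\omega}$, $-\int_{\DD_T}\xi(\tau_n)\,(y^{u_n,f_n,\tau_n}+y^{u_n,f_n,\omega}-2y^{u,f,\omega})\,w_n$ and $-2\int_{\DD_T}(\xi(\tau_n)-1)\,y^{u,f,\omega}\,w_n$. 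Each summand tends to $0$: the first, third and fourth since $w_n\to 0$ in $L_2(0,T;L_2(\DD))$ while the other factors stay bounded (also using $\|\xi(\tau_n)-1\|_{L_\infty}\to0$ and $\|y^{u_n,f_n,\tau_n}+y^{u_n,f_n,\omega}-2y^{u,f,\omega}\|_{L_2(L_2)}\to0$), and the second since $\|A(\tau_n)-I\|_{L_\infty}\to0$ while $\|\nabla w_n\|_{L_2(L_2)}$ is bounded. As the first term on the left is nonnegative and $A(\tau_n)$ is uniformly elliptic, this forces $\|w_n\|_{L_2(0,T;H^1_0(\DD))}\to0$. I expect the only genuinely delicate points to be the uniform a priori estimate for the variable-coefficient averaged adjoint (the adjoint counterpart of Lemma~\ref{lem:bounds_y}, where the symmetry and $t$-independence of $A(\tau_n)$ are essential) and the careful handling of the $A(\tau_n)-I$ and $\xi(\tau_n)-1$ error terms in both the limit passage and the energy identity; everything else is the standard compactness-and-uniqueness routine.
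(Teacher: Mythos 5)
Your proposal is correct, and it supplies in full the details that the paper compresses into a single sentence: the paper simply declares the lemma ``a direct consequence of \eqref{eq:averaged_adjoint} and Lemma~\ref{eq:state_pert}'', i.e.\ it intends exactly the compactness-and-uniqueness scheme you carry out (uniform energy bounds for the averaged adjoint using the uniform ellipticity of $A(\tau_n)$ and the lower bound on $\xi(\tau_n)$, extraction of weak limits, passage to the limit against fixed $\varphi\in W(0,T)$ by pairing the strong $L_\infty$ convergence of the coefficients with the weak convergence of $\partial_t p^{u_n,f_n,\tau_n}$ and $\nabla p^{u_n,f_n,\tau_n}$, identification via uniqueness of \eqref{eq:adjoint_weak}, and the standard subsequence argument). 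The one place where you add a genuinely independent ingredient is the strong $L_2(0,T;H^1_0(\DD))$ convergence: the paper does not say how it would upgrade from weak to strong there, and the route most parallel to Lemma~\ref{lem:continuity_f} would be to transport $p^{u_n,f_n,\tau_n}$ back by $T_{\tau_n}^{-1}$, use maximal parabolic regularity on the fixed-coefficient backward equation to get a uniform $Z(0,T)$ bound, and invoke the compact embedding of $Z(0,T)$; your energy identity for the difference $w_n$ achieves the same conclusion while staying on the variable-coefficient equation and avoiding any $L_2(H^2)$ estimate for it (which, as the paper's remark after Lemma~\ref{lem:bounds_y} notes, would otherwise call for a more regular vector field $X$). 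Your identity and the convergence of each remainder term check out (the coefficient errors are controlled by $\|\xi(\tau_n)-1\|_{L_\infty}$ and $\|A(\tau_n)-I\|_{L_\infty}$, and $y^{u_n,f_n,\tau_n}+y^{u_n,f_n,\omega}\to 2y^{u,f,\omega}$ in $L_2(L_2)$ follows as you say from Lemma~\ref{eq:state_pert} plus Aubin--Lions and from Lemma~\ref{lem:continuity_f} with constant $\omega_n$); the only formal point is the test with $\partial_t p^{u_n,f_n,\tau_n}$ in the a priori estimate, which, as usual, should be justified through a Galerkin approximation, exactly as for the state equation.
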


Now we have gathered all the ingredients to complete the proof of Theorem~\ref{thm:shape_derivative_J2}(a) on page 9.

\noindent
{\bf{Proof of Theorem~\ref{thm:shape_derivative_J2}(a)}}
Using the fundamental theorem of calculus we obtain for all $\tau \in [0,\tau_X]$,
\ben\label{eq:average_consequence}
\begin{split}
    \tilde{G}(\tau,u,\y, & y^{u, f ,\tau},p^{u,f,\tau}) -  \tilde{G}(\tau,u,\y,y^{u, f ,\omega},p^{u,f,\tau}) \\
    & = \int_0^1 \partial_y \tilde{G}(\tau,u,\y,s y^{u, f ,\tau} + (1-s)y^{u, f ,\omega},p^{u,f,\tau})(y^{u, f ,\tau}-y^{u, f ,\omega})\; ds =0,
\end{split}
\een
where in the last step we used the averaged adjoint equation \eqref{eq:averaged_adjoint}. In addition we have
$J(\omega_\tau,u\circ T_\tau^{-1},\y) = \tilde{G}(\tau,u,\y,y^{u, f ,\omega},p^{u,f,\tau}),$ which together with \eqref{eq:average_consequence} gives
\ben
J(\omega_\tau,u\circ T_\tau^{-1},\y) = \tilde{G}(\tau,u,\y,y^{u, f ,\omega},p^{u,f,\tau}).
\een
As a consequence we obtain
\ben\label{eq:J}
\mathcal J_1(\omega_\tau, \y) = \inf_{u\in \UU} \tilde{G}(\tau,u,\y,y^{u, f ,\omega},p^{u,f,\tau}).
\een

We apply Lemma~\ref{lem:maxmin} with $\ell(t) := t$,
\ben
G(\tau,u,f) := \tilde{G}(\tau,u,\y,y^{u, f ,\omega},p^{u,f,\tau}),
\een
$\mathfrak U = \UU$, and $\mathfrak V = \{\y\in K:\; \|\y \|_{H^1_0(\Omega)} \le 1 \}$.

Since the minimization problem  \eqref{eq:j_1_tau}  admits a unique solution,  Assumption (A0) is  satisfied.
A  minor change in the proof of  Lemma \ref{lem:maximiser} to accommodate the reparametrisation of the domain $\omega$ shows that  (A1) is  satisfied as well.

Let $(\tau_n)$ be an arbitrary null-sequence and let $(\y_n)$ be a sequence in $K$ converging weakly in ${H^1_0(\Omega)}$ to $f\in K$,
and let us set $\bar u_n := \bar u^{\y_n,\tau_n}$. Thanks to Lemma~\ref{lem:convergence_un_shape} we have that $\bar u_n$ converges strongly in $L_2(0,T;L_2(\DD))$ to $\bar u^{\y,\omega}$.
 Moreover Lemma~\ref{lem:averaged_shape} implies
\ben\label{eq:convergence_averaged_adjoint}
\begin{split}
	p^{\bar u_n,\y_n,\tau_n} \to & p^{\bar u^{\y,\omega}, \y, \omega }  \quad\text{ in } L_2(0,T;H^1_0(\DD)) \quad \text{ as } n\rightarrow \infty,\\
	p^{\bar u_n,\y_n,\tau_n} \rightharpoonup & p^{\bar u^{\y,\omega}, \y, \omega } \quad\text{ in }  H^1(0,T;L_2(\DD))\quad \text{ as } n\rightarrow \infty.
\end{split}
\een
Using Lemma~\ref{lem:auxillary} we see that
\ben
\frac{ A(\tau_n) - I}{\tau_n} \to \divv(X) - \partial X - \partial X^\top \quad \text{ in } L_\infty(\DD,\R^{d\times d}) \quad \text{ as } n\to \infty,
\een
and
\ben
\frac{\xi(\tau_n)-1}{\tau_n} \to \divv(X) \quad \text{ in } L_\infty(\DD)\quad \text{ as } n\to \infty.
\een
Therefore we get
\ben
\begin{split}
	&\frac{G(  \tau_n, \bar u_n,\y_n)-G(0,\bar u_n,\y_n)}{\tau_n} \\
    =&  \frac{ \tilde{G}(\tau_n,\bar u_n,\y_n,y^{\bar u_n, \y_n ,\omega},p^{\bar u_n,\y_n,\tau_n}) - \tilde{G}(0,\bar u_n,\y_n,y^{\bar u_n, \y_n ,\omega},p^{\bar u_n,\y_n,\tau_n})}{\tau_n}\\
	=&\int_{\DD_T} \frac{\xi(\tau_n)-1}\tau \left(|y^{\bar u_n, \y_n ,\omega}|^2  + \gamma  |\bar u_n|^2  \right)\;dx dt\\
	& + \int_{\DD_T} \frac{\xi(\tau_n)-1}\tau\, \partial_t y^{\bar u_n, \y_n ,\omega} \,  p^{\bar u_n,\y_n,\tau_n}\; dx\,dt \\
	& + \int_{\DD_T} \frac{A(\tau_n)-I}{\tau_n}\nabla y^{\bar u_n, \y_n ,\omega} \cdot \nabla p^{\bar u_n,\y_n,\tau_n} \; dx\,dt\\
	& -  \int_{\DD_T}\frac{\xi(\tau_n)-1}\tau \bar u_n \chi_\omega p^{\bar u_n,\y_n,\tau_n} \;dx\,dt\\
	&  + \int_{\DD} \big(\frac{\xi(\tau_n)-1}{\tau_n}  (y^{\bar u_n, \y_n ,\omega}(0) - \y_n\circ T_{\tau_n}) - \frac{ \y_n\circ T_{\tau_n} -\y_n }{\tau_n} \big)p^{\bar u_n,\y_n,\tau_n}(0)\;dx
\end{split}
\een
and using Lemma~\ref{lem:phit} and \eqref{eq:convergence_averaged_adjoint}, we see that the right hand side tends to
\ben\label{eq:derivative_G_prelim}
\begin{split}
	& \int_{\Omega_T} \divv(X) ( |\bar y^{\y,\omega}|^2 + \gamma |\bar u^{\y, \omega}|^2 + \partial_t \bar y^{\y,\omega} \bar p^{\y,\omega}  + \nabla \bar y^{\y,\omega} \cdot \nabla   \bar p^{\y,\omega} -   \bar u^{\y,\omega} \bar p^{\y,\omega} \chi_\omega )\; dx\; dt\\
	& -\int_{\Omega_T} \partial X\nabla \bar y^{\y,\omega}\cdot \nabla \bar p^{\y,\omega} + \partial X\nabla \bar p^{\y,\omega}\cdot \nabla \bar y^{\y,\omega} + \frac1T \nabla \y \cdot X \bar p^{\y,\omega}(0)\; dx\; dt.
\end{split}
\een
Partial integration in time yields
\ben\label{eq:derivative_G_prelim2}
\int_{\DD_T}  \bar p^{\y,\omega} \partial_t \bar y^{\y,\omega} \divv(X)\; dx \; dt = - \int_{\DD_T}  \partial_t \bar p^{\y,\omega}  \bar y^{\y,\omega} \divv(X)\; dx \; dt - \int_{\DD} \divv(X) \y \bar p^{\y,\omega}(0)\; dx,
\een
where we used $\bar y^{\y,\omega}(0) = \y$ and $\bar p^{\y,\omega}(T)=0$. As a result, inserting \eqref{eq:derivative_G_prelim2} into \eqref{eq:derivative_G_prelim}, we see that \eqref{eq:derivative_G_prelim} can be written as
\ben\label{eq:limit_tensor_proof}
\int_{\Omega_T} \Sb_1(\bar y^{\y,\omega} ,\bar p^{\y,\omega},u^{\y,\omega}):\partial X + \Sb_0\cdot X \; dx\;dt
\een
with $\Sb_1, \Sb_2$ being given by \eqref{eq:S0_S1}. Hence we obtain

\ben\label{eq:kk10}
\lim_{n\to \infty} \frac{G(  \tau_n, \bar u_n,\y_n)-G(0,\bar u_n,\y_n)}{\tau_n} = \int_{\Omega_T} \Sb_1(\bar y^{\y,\omega} ,\bar p^{\y,\omega},u^{\y,\omega}):\partial X + \Sb_0\cdot X \; dx\;dt.
\een

Next let  $\bar u_{n,0} := \bar u^{\y_n,0}$. Then we can show in as similar manner as \eqref{eq:kk10} that
\ben\label{eq:kk11}
\lim_{n\to \infty} \frac{G(  \tau_n, \bar u_{n,0},\y_n)-G(0,\bar u_{n,0},\y_n)}{\tau_n} = \int_{\Omega_T} \Sb_1(\bar y^{\y,\omega} ,\bar p^{\y,\omega},u^{\y,\omega}):\partial X + \Sb_0\cdot X \; dx\;dt.
\een
 Hence choosing $(\y_n)$ to be a constant sequence we see that (A2) is satisfied.

 But also (A3) is satisfied  since according to Lemma~\ref{lem:convergence_un_shape2}  we find for every null-sequence $(\tau_n)$ in  $[0,\tau_X]$ and every sequence $(\y_n)$, $\y_n\in \mathfrak{X}_2(\omega_{\tau_n})$, a subsequence $(\y_{n_k})$ and $\y\in  \mathfrak{X}_2(\omega)$, such that $\y_{n_k} \rightharpoonup \y$ in ${H^1_0(\Omega)}$ as $k\to \infty$.
 Now we use \eqref{eq:kk10}  and \eqref{eq:kk11} with $f_n$ replaced by this choice of $f_{n_k}$, and conclude that (A3) holds.
  Thus all requirements of Lemma~\ref{lem:maxmin} are satisfied and this ends the proof of Theorem~\ref{thm:shape_derivative_J2}(a).

\section{Topological derivative}\label{sec:five}
In this section we will derive the topological derivative of the shape functions $\mathcal J_1$ and $\mathcal J_2$ introduced in \eqref{eq:cost} and \eqref{eq:final_cost}, respectively. The topological derivative, introduced in \cite{SOZO99}, allows to predict the position where small holes in the shape should be inserted in order to achieve a  decrease of  the shape function.

\subsection{Definition of topological derivative}
We begin by introducing the so-called topological derivative. For more details we refer to \cite{NOSO13}.
\begin{definition}[Topological derivative]
	The topological derivative of a shape funcional $J:\mathfrak Y(\DD) \to \R $ at $\omega\in \mathfrak Y(\DD)$ in the point $\eta_0\in \DD\setminus \partial \omega$ is defined by
	\ben
	\mathcal TJ(\omega)(\eta_0) =  \left\{\begin{array}{ll}
		\lim_{\eps\searrow 0}\frac{J(\omega\setminus \bar B_\eps(\eta_0)) - J(\omega)}{|\bar B_\eps(\eta_0)|} & \text{ if } \eta_0 \in \omega, \\
		\lim_{\eps\searrow 0}\frac{J(\omega\cup  B_\eps(\eta_0)) - J(\omega)}{|B_\eps(\eta_0)|} & \text{ if } \eta_0 \in \DD \setminus \overbar \omega
	\end{array}\right..
	\een
\end{definition}

\subsection{Second main result: topological derivative of $\mathcal J_2$}
 Given $\omega\in \mathfrak Y(\DD)$ we set 
\begin{equation}
    \omega_\eps :=\begin{cases}
        \omega\setminus \bar B_\eps (\eta_0)& \text{ for } \eta_0\in\omega,\\
        \omega\cup \bar B_\eps(\eta_0) & \text{ for } \eta\in \DD\setminus\overline\omega.
    \end{cases}
\end{equation}
Moreover, we use the notation $\text{sgn}_\omega(x) = -1$ for $x\in \omega$ and $\text{sgn}_\omega(x)=1$ for $x\in \DD\setminus\overline\omega$. Further denote  by $\bar u^{\y,\omega_\eps}$ the minimiser of the right hand side of \eqref{eq:cost} with $\omega=\omega_\eps$.

For $\omega\in \mathfrak Y(\DD)$ and $\y\in K$, we set $\bar y^{\y,\omega} := y^{\bar u^{\omega,\y},\y,\omega}$ and  $\bar p^{\y,\omega} := p^{\bar u^{\omega,\y},\y,\omega}$. The main result that we are going to establish reads as follows.
\begin{theorem}\label{thm:diff_G_top}
    Let $\omega\in \mathfrak Y(\DD)$ be open. 
    \begin{itemize}
        \item[(a)] For  $ \mathcal U = \mathbf{R}$, the topological derivative of $\omega\mapsto \mathcal J_2(\omega)$ at $\omega$ in $\eta_0$  is given by
	\ben\label{eq:topo_main_J2}
	\begin{displaystyle}
		\mathcal T\mathcal J_2(\omega)(\eta_0) = \max_{\y\in \mathfrak X_2(\omega)}
            - \text{sgn}_\omega(\eta_0)	\int_0^T \overbar u^{\y,\omega}(s) \bar p^{\y,\omega}(\eta_0,s)\;ds 	\end{displaystyle}
	\een
    for $\eta_0\in \omega\cup (\DD\setminus\overline\omega)$, 
\item[(b)]	For  $ \mathcal U = L_2(\Omega)$, the topological derivative of $\omega\mapsto \mathcal J_2(\omega)$ at $\omega$ in $\eta_0$  is given by
	\ben\label{eq:topo_main_J2_}
	\begin{displaystyle}
		\mathcal T\mathcal J_2(\omega)(\eta_0) = \frac{1}{4\gamma}\max_{\y\in \mathfrak X_2(\omega)}
            - 		\text{sgn}_\omega(\eta_0)	\int_0^T (\bar p^{\y,\omega}(\eta_0,s))^2\;ds 	\end{displaystyle}
	\een
    for $\eta_0\in \omega\cup (\DD\setminus\overline\omega)$, 
\end{itemize}

    The adjoint $\bar p^{\y,\omega}$ belongs to $C([0,T]\times B_\delta(\eta_0))$ and satisfies
	\begin{alignat}{2}\label{eq:adjoint}
	-\partial_t \bar p^{\y,\omega}  - \Delta \bar p^{\y,\omega} =  -2 \bar y^{\y,\omega} &  & &\quad \text{ in } \DD \times (0,T], \\
	\bar p^{\y,\omega}=0 &  & &\quad \text{ on }\partial \DD\times (0,T],\\
	\bar p^{\y,\omega}(T)=0 &  & &\quad \text{ in } \DD.
	\end{alignat}
\end{theorem}

\begin{corollary}\label{cor:topo_invariant}
	Let the hypotheses of Theorem~\ref{thm:diff_G_top} be satisfied. Assume that if $v\in \mathcal U$ then $-v\in \mathcal U$. Then we have
	\ben
	\mathcal T\mathcal J_1(\omega,-\y)(\eta_0) = \mathcal T\mathcal J_1(\omega,\y)(\eta_0)
	\een
	for all $\eta_0\in \DD\setminus \partial \omega$ and $ \y\in V$.
\end{corollary}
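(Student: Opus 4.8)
The plan is to reproduce, almost verbatim, the argument used for the shape-derivative analogue Corollary~\ref{cor:shape_invariant}: exploit the linearity of the optimality system \eqref{eq:optimality_system} together with the assumed central symmetry $v\in\mathcal U\Leftrightarrow -v\in\mathcal U$ to deduce that flipping the initial condition flips the sign of the whole optimal triple, and then observe that the topological-derivative formula \eqref{eq:topo_main_J1} is \emph{bilinear} in $(\overbar u^{\y,\omega},\bar p^{\y,\omega})$, hence invariant under the simultaneous sign change.

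First I would fix $\omega\in\mathfrak Y(\DD)$ and $\y\in V$ and abbreviate $(\overbar u,\overbar y,\overbar p):=(\overbar u^{\y,\omega},\bar y^{\y,\omega},\bar p^{\y,\omega})$, the solution of \eqref{eq:optimality_system} with datum $\y$. By linearity of \eqref{eq:opt_1}--\eqref{eq:opt_2} the triple $(-\overbar u,-\overbar y,-\overbar p)$ solves the state and adjoint equations with datum $-\y$ and terminal condition $0$. For the variational inequality \eqref{eq:opt_3} one writes it for $(-\overbar u,-\overbar p)$, substitutes the test function $v=-w$, and uses $w\in\mathcal U\Leftrightarrow -w\in\mathcal U$ to recover exactly \eqref{eq:opt_3} for $(\overbar u,\overbar p)$; this is where the hypothesis on $\mathcal U$ is used, and it also shows $-\overbar u\in L_2(0,T;\mathcal U)$, so $(-\overbar u,-\overbar y,-\overbar p)$ is admissible. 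By uniqueness of the solution of \eqref{eq:optimality_system} we get $\overbar u^{-\y,\omega}=-\overbar u^{\y,\omega}$, $\bar y^{-\y,\omega}=-\bar y^{\y,\omega}$, $\bar p^{-\y,\omega}=-\bar p^{\y,\omega}$; under Assumption~\ref{ass:topo} and Theorem~\ref{thm:diff_G_top} the functions $\overbar u^{\cdot,\omega}$ and $\bar p^{\cdot,\omega}$ admit pointwise values on $\bar B_\delta(\eta_0)$, so these identities also hold pointwise at $\eta_0$.

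Second, I would simply insert these sign relations into \eqref{eq:topo_main_J1}. In both cases $\eta_0\in\omega$ and $\eta_0\in\DD\setminus\overbar\omega$ the integrand is the product $\overbar u^{\y,\omega}(\eta_0,s)\,\bar p^{\y,\omega}(\eta_0,s)$, and replacing $\y$ by $-\y$ multiplies each of the two factors by $-1$, leaving the product, the time integral, and the case distinction unchanged. Hence $\mathcal T\mathcal J_1(\omega,-\y)(\eta_0)=\mathcal T\mathcal J_1(\omega,\y)(\eta_0)$, which is the claim.

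I do not expect any genuine obstacle here: the statement is a one-line consequence of the linearity already encoded in the optimality system. The only points deserving a sentence of care are (i) verifying admissibility of the sign-reversed control, which is precisely where the symmetry assumption on $\mathcal U$ is invoked, and (ii) justifying pointwise evaluation of $\overbar u^{\y,\omega}$ and $\bar p^{\y,\omega}$ at $\eta_0$, which is supplied by Assumption~\ref{ass:topo} and the regularity asserted in Theorem~\ref{thm:diff_G_top}.
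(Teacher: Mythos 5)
Your proposal is correct and follows exactly the paper's own argument: the paper likewise infers $\overbar u^{-\y,\omega}=-\overbar u^{\y,\omega}$, $\bar y^{-\y,\omega}=-\bar y^{\y,\omega}$, $\bar p^{-\y,\omega}=-\bar p^{\y,\omega}$ from the optimality system \eqref{eq:optimality_system} and the symmetry of $\mathcal U$, and then concludes from the product structure of \eqref{eq:topo_main_J1}. Your additional remarks on admissibility of the sign-reversed control and on pointwise evaluation at $\eta_0$ merely make explicit what the paper leaves implicit.
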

\begin{proof}
	Let $\y\in V$ be given.  From the optimality system \eqref{eq:optimality_system} and the assumption that $v\in \mathcal U$ implies $-v\in \mathcal U$, we infer that $\overbar u^{-\y,\omega} = -\overbar u^{\y,\omega}$, $\bar y^{-\y,\omega} =- \bar y^{\y,\omega}$ and $\bar p^{-\y,\omega} =- \bar p^{\y,\omega}$. Now the result follows.
\end{proof}

\subsection{Averaged adjoint equation and Lagrangian}

Throughout this section we fix an open set $\omega\in \mathfrak Y(\DD)$ and pick $\eta_0 \in \Omega\setminus\partial\omega$ and define $\omega_\eps := \omega\setminus \overbar B_\eps(\eta_0)$, $\eps >0$.

For every quadruple $(u,\y,y,p)\in \UU \times K\times W(0,T)\times W(0,T)$ and every  $\eps \ge 0$ we define the parametrised Lagrangian,
\ben\label{eq:lagrangian_topo}
\begin{split}
	{\tilde G}(\eps,u,\y,y,p)  := & \int_{\DD_T}y^2  +  \gamma u^2  \;dx \; dt
	+ \int_{\DD_T} \partial_t y p + \nabla y \cdot \nabla p \; dx\,dt\\
	& -  \int_{\DD_T} \chi_{\omega_\eps} u p \;dx\,dt + \int_{\DD}(y(0)-f)p(0)\;dx.
\end{split}
\een

We denote by $y^{u,f,\eps}\in W(0,T)$ the solution of the state equation \eqref{eq:state_system} with $\chi=\chi_{\omega_\eps}$ in \eqref{eq:statea}. Then, similarly to  \eqref{eq:averaged}, we introduce the averaged adjoint: find $q^{u,f,\eps}\in W(0,T)$, such that
\ben
\int_0^1\partial_y {\tilde G}(\eps,u,\y,\sigma y^{u,f,\eps}+(1-\sigma)y^{u,f,\omega},q^{u,f,\eps})(\varphi)\; d\sigma =0\quad \text{ for all 	} \varphi \in W(0,T)
\een
or equivalently after partial integration in time, $q^{u,f,\eps}(T) = 0$ and
\ben\label{eq:averaged_weak}
\int_{\Omega_T} -\varphi \partial_t q^{u,f,\eps}  +  \nabla \varphi \cdot \nabla  q^{u,f,\eps} \; dx\;dt   =  -\int_{\Omega_T}   (y^{u,f,\eps} + y^{u,f,\omega}) \varphi \; dx\;dt
\een
for all $\varphi\in W(0,T)$. For $\eps =0$ we obtain the usual adjoint for which we set $p^{u,f,\omega}:= q^{u,f,0}$.

\subsection{Proof of Theorem~\ref{thm:diff_G_top}}

\begin{lemma}\label{lem:averaged_topo}
	Let $\delta>0$ be such that $\bar B_\delta(\eta_0) \Subset \DD$.
	For all sequences $\eps_n\in(0,1]$, $u_n,u\in \UU$ and $f_n,f\in K$, such that
	\ben
	u_n \rightharpoonup u \quad\text{ in } \UU, \quad f_n \rightharpoonup f \quad \text{ in } V, \quad \eps_n\to 0, \quad \text{ as } n\to \infty,
	\een
	we have
	\ben
	\begin{split}
		q^{u_n,f_n,\eps_n} \to & p^{u,f, \omega } \quad\text{ in } L_2(0,T;H^1_0(\DD)) \quad \text{ as } n\rightarrow \infty,\\
		q^{u_n,f_n,\eps_n} \rightharpoonup & p^{u,f, \omega } \quad\text{ in }  H^1(0,T;L_2(\DD))\quad \text{ as } n\rightarrow \infty.
	\end{split}
	\een
	Moreover there is a subsequence $(	q^{u_{n_k},f_{n_k},\eps_{n_k}})$, such that
	\ben\label{eq:compactness_p}
	q^{u_{n_k},f_{n_k},\eps_{n_k}} \to  p^{u,f, \omega } \quad\text{ in }  C([0,T]\times \bar B_\delta(\eta_0))\quad \text{ as } n\rightarrow \infty.
	\een
\end{lemma}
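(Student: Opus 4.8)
The plan is to transfer the convergence of the states to the averaged adjoints via linear parabolic estimates, and then to promote it to the uniform topology on $[0,T]\times\overline{B}_\delta(\eta_0)$ by an interior regularity bootstrap. First, since in both cases ($\omega_{\eps_n}=\omega\setminus\overline{B}_{\eps_n}(\eta_0)$ or $\omega_{\eps_n}=\omega\cup B_{\eps_n}(\eta_0)$) the symmetric difference of $\omega_{\eps_n}$ and $\omega$ has measure at most $|B_{\eps_n}(\eta_0)|\to 0$, we have $\chi_{\omega_{\eps_n}}\to\chi_\omega$ in $L_1(\DD)$, i.e.\ $\omega_{\eps_n}\to\omega$ in $\mathfrak Y(\DD)$. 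Applying Lemma~\ref{lem:continuity_f} once to $(u_n,f_n,\omega_{\eps_n})$ and once to $(u_n,f_n,\omega)$ gives $y^{u_n,f_n,\eps_n}\to y^{u,f,\omega}$ and $y^{u_n,f_n,\omega}\to y^{u,f,\omega}$ in $L_2(0,T;H^1_0(\DD))$, and \eqref{eq:estimate_apriori} bounds both families in $L_2(0,T;H^2(\DD)\cap H^1_0(\DD))$. Hence the source $g_n:=-(y^{u_n,f_n,\eps_n}+y^{u_n,f_n,\omega})$ appearing on the right of \eqref{eq:averaged_weak} converges strongly in $L_2(0,T;L_2(\DD))$ to $-2y^{u,f,\omega}$ and is bounded in $L_2(0,T;H^2(\DD)\cap H^1_0(\DD))$.

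Next, after the time reversal $t\mapsto T-t$, equation \eqref{eq:averaged_weak} is a forward linear heat equation with source $g_n$ and homogeneous initial and lateral data, so \eqref{eq:estimate_apriori} applies and bounds $p_n:=p^{u_n,f_n,\eps_n}$ in $Z(0,T)=L_2(0,T;H^2(\DD)\cap H^1_0(\DD))\cap H^1(0,T;L_2(\DD))$ uniformly in $n$. Using the compact embedding \eqref{eq:embedding} and the Aubin--Lions lemma one extracts a subsequence converging weakly in $Z(0,T)$, strongly in $L_2(0,T;H^1_0(\DD))$, and in $C([0,T];L_2(\DD))$ to some $p$. Passing to the limit in the weak form \eqref{eq:averaged_weak} — the left-hand side by the weak $Z(0,T)$-convergence, the right-hand side by the strong $L_2(0,T;L_2(\DD))$-convergence of $g_n$, and the terminal condition $p_n(T)=0$ by the $C([0,T];L_2(\DD))$-convergence — shows that $p$ solves the weak form of \eqref{eq:adjoint}, hence $p=p^{u,f,\omega}$ by uniqueness of the backward heat equation. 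Since the limit does not depend on the subsequence, the whole sequence converges, which establishes the first two displayed convergences.

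For the uniform convergence near $\eta_0$ I would argue by interior regularity. Fix $\delta'$ with $\delta<\delta'<\mathrm{dist}(\eta_0,\partial\DD)$ and a cut-off $\zeta\in C_c^\infty(B_{\delta'}(\eta_0))$ with $\zeta\equiv 1$ on $B_\delta(\eta_0)$. Then $\zeta p_n$, extended by zero, solves the heat equation on $\R^d\times(0,T)$ with zero terminal data and source $h_n:=\zeta g_n-2\nabla\zeta\cdot\nabla p_n-(\Delta\zeta)p_n$, which is bounded in $L_2(0,T;H^1(\R^d))$ because $g_n$ and $p_n$ are bounded in $L_2(0,T;H^2(\DD))$. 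Interior parabolic regularity therefore bounds $\zeta p_n$, uniformly in $n$, in $W:=\{w\in L_2(0,T;H^3(\R^d)):\partial_t w\in L_2(0,T;H^1(\R^d))\}$. Since $W\hookrightarrow C([0,T];H^2(\R^d))$ and, for $d\le 3$, $H^2\hookrightarrow\hookrightarrow H^{2-\theta}\hookrightarrow C^0(\overline{B}_\delta(\eta_0))$ for $0<\theta<2-d/2$, the Aubin--Lions--Simon lemma makes $(\zeta p_n)$ relatively compact in $C([0,T];C^0(\overline{B}_\delta(\eta_0)))=C([0,T]\times\overline{B}_\delta(\eta_0))$. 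As $\zeta\equiv 1$ on $B_\delta(\eta_0)$ and the $L_2$-limit has already been identified, a subsequence of $(p_n|_{\overline{B}_\delta(\eta_0)})$ converges in $C([0,T]\times\overline{B}_\delta(\eta_0))$ to $p^{u,f,\omega}$; in particular $p^{u,f,\omega}\in C([0,T]\times\overline{B}_\delta(\eta_0))$ and \eqref{eq:compactness_p} follows. The case $\eta_0\in\DD\setminus\overline\omega$ is treated identically, with $\omega_{\eps_n}=\omega\cup B_{\eps_n}(\eta_0)$.

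The main obstacle is the last step: since $g_n$ is only square-integrable in time one cannot feed it into a parabolic H\"older estimate directly, so the argument exploits that $g_n$, being a difference of states, carries full elliptic $H^2$-regularity in space; the interior bootstrap converts this into the regularity $C([0,T];H^{2-\theta}_{\mathrm{loc}})$ needed to reach the uniform topology, while the extra spatial derivative gained in the parabolic estimate supplies the time-derivative bound required for Aubin--Lions--Simon. Everything else is a routine compactness-and-uniqueness argument built on Lemma~\ref{lem:continuity_f} and the a-priori estimate \eqref{eq:estimate_apriori}.
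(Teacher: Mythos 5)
Your proposal is correct, and its overall architecture (compactness-and-uniqueness for the two $L_2$/weak convergences, then an interior regularity bootstrap plus compact embedding for the uniform convergence) matches the paper's; the difference lies in how the bootstrap is carried out. For the first two convergences you reproduce, in detail, what the paper dispatches by reference to Lemma~\ref{lem:averaged_shape}: the sources $-(y^{u_n,f_n,\eps_n}+y^{u_n,f_n,\omega})$ converge strongly in $L_2(L_2)$ by Lemma~\ref{lem:continuity_f}, the adjoints are bounded in $Z(0,T)$, and uniqueness of the limit equation identifies $p^{u,f,\omega}$. For \eqref{eq:compactness_p} the paper gains two orders locally: it asserts $p^{u,f,\eps}\in \tilde Z(0,T)=L_2(0,T;H^4(B_\delta(\eta_0)))\cap H^1(0,T;H^1_0(B_\delta(\eta_0)))\cap H^2(0,T;L_2(B_\delta(\eta_0)))$ with an a priori bound in terms of $\|y^{u,f,\eps}+y^{u,f}\|_{L_2(H^2)}$ and $\|\tfrac{d}{dt}(y^{u,f,\eps}+y^{u,f})\|_{L_2(L_2)}$ (higher parabolic regularity, Evans), and then uses directly that $\tilde Z(0,T)$ embeds compactly into $C([0,T]\times \bar B_\delta(\eta_0))$. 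You instead gain only one order: cut-off, maximal regularity with source bounded in $L_2(0,T;H^1)$, hence uniform bounds in $L_2(0,T;H^3)$ with time derivative in $L_2(0,T;H^1)$, then $C([0,T];H^2)$ by interpolation, and finally Aubin--Lions--Simon combined with $H^{2-\theta}\hookrightarrow C^0(\bar B_\delta(\eta_0))$ for $d\le 3$. The trade-off is that the paper's route exploits the time-derivative bound on the states (available since they are bounded in $H^1(0,T;L_2)$) and a stronger regularity theorem, after which the uniform topology is reached in one step, whereas yours needs only the spatial $H^2$-boundedness of states and adjoints but leans on the Simon compactness lemma and the dimension restriction $d\le 3$ (which the paper uses implicitly as well). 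Both arguments deliver exactly the subsequential convergence claimed, and your identification of the uniform limit with $p^{u,f,\omega}$ through the already-established $L_2$ convergence is the right way to close.
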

\begin{proof}
	The first two statements follow by similar arguments as used in Lemma~\ref{lem:averaged_shape}. To prove the third we have by interior regularity of parabolic equations   that
	\ben
	q^{u,f,\eps} \in \tilde Z(0,T) :=  L_2(0,T;H^4(B_\delta(\eta_0)))\cap H^1(0,T;H^1_0(B_\delta(\eta_0)))\cap H^2(0,T;L_2(B_\delta(\eta_0)))
	\een
	and we have the apriori bound
	\ben
	\begin{array}l
		\sum_{k=0}^2\|\big(\frac{d}{dt}\big )^kq^{u,f,\eps}\|_{L_2(0,T;H^{4-2k}(B_\delta(\eta_0)))}\\[1.5ex] \qquad \le c(\|y^{u,f,\eps} + y^{u,f}\|_{L_2(H^2)} + \|\frac{d}{dt}(y^{u,f,\eps} + y^{u,f})\|_{L_2(L_2)}),
	\end{array}
	\een
	see e.g.  \cite[p.365-367, Thm.6]{EV98}.
	Hence \eqref{eq:compactness_p} follows since the space $\tilde Z(0,T)$ embeds compactly into $C([0,T]\times \bar B_\delta(\eta_0))$ .
\end{proof}

\begin{lemma}\label{lem:aux_L2}
Let $U:= L_2(0,T;L_2(\Omega))$. For $f\in K$ and $u\in \UU$ introduce $J(\eps,u,f) := \int_{\Omega_T} (y^{u,f,\eps})^2 + \gamma u^2 \;dx \;dt$. For $\eps \ge 0$ small, let $\hat \chi_{\omega_\eps} := \frac{1}{2\gamma}\chi_{\omega_\eps}$ and denote by $\bar p_\eps $ the adjoint associated to the optimal control $\bar u_\eps  :=\bar u^{f,\omega_\eps }$, that means that $\bar p_\eps$ and $\bar u_\eps $ solve
 \begin{equation}
     -\partial_t \bar p_\eps - \Delta \bar p_\eps = -2y^{\bar u_\eps,f,\eps} \quad \text{ in } \Omega\times (0,T]
 \end{equation}
 with the terminal condition $\bar p_\eps(T) =0$ in $\Omega$  and 
 \begin{equation}
     \min_{u\in L_2(0,T;L_2(\Omega))} J(\eps,u,f) = J(\eps,\bar u_\eps,f). 
 \end{equation}  

 For $\eta_0\in \Omega\setminus\partial \omega$, we have
 \begin{equation}\label{eq:first} 
     \lim_{\eps\searrow 0} \frac{J(0,\hat \chi_{\omega_\eps} \bar p_\eps,f) - J(0, \hat \chi_\omega \bar p_\eps, f)}{|B_\eps(0)|} =  \frac{1}{4\gamma} \int_0^T (\bar p^{f,\omega}(\eta_0,t))^2\;dt.
\end{equation}
and 
\begin{equation}\label{eq:second}
\lim_{\eps\searrow 0} \frac{J(\eps,\hat\chi_{\omega_\eps} \bar p ,f) - J(\eps, \hat \chi_{\omega}\bar p, f)}{|B_\eps(\eta_0)|} = -\frac{1}{4\gamma} \int_0^T (\bar p^{f,\omega}(\eta_0,t))^2 \;dt. 
\end{equation}
\end{lemma}
\begin{proof}
    Let $\bar p:= \bar p^{f,\omega}$. We first prove \eqref{eq:first} for $\eta_0 \in \Omega\setminus \overline \omega$. For this we note that $\chi_{\omega} (\hat \chi_{\omega_\eps}\bar p_\eps) = \chi_{\omega} (\hat \chi_{\omega} \bar p_\eps)$ a.e. in $\Omega$. Therefore $y^{\hat \chi_{\omega_\eps}\bar p_\eps , f,\omega} = y^{\hat \chi_{\omega}\bar p_\eps, f,\omega}$. Thus 
\begin{equation}
    \begin{split}
        J(0,\hat \chi_{\omega_\eps} \bar p_\eps,f) - J(0, \hat \chi_\omega \bar p_\eps, f)  & =  \int_0^T\int_\Omega \gamma (\hat \chi_{\omega_\eps} \bar p_\eps)^2-\gamma (\hat \chi_{\omega}\bar p_\eps)^2\;dx\;dt   \\
                                                                                            & =  \frac{1}{4\gamma} \int_0^T\int_{B_\eps(\eta_0)} \bar p_\eps^2\;dx\;dt
\end{split}
\end{equation}
Dividing by $|B_\eps(\eta_0)|$, passing to the limit $\eps\searrow 0$ and using $\bar p_\eps \to \bar p$ in $C(0,T;B_\delta(\eta_0))$ for $\delta >0$ small,  yields \eqref{eq:first}. In a similar fashion we will prove \eqref{eq:second} for $\eta_0\in \omega$. For this we note that $\chi_{\omega_\eps} (\hat \chi_{\omega_\eps}\bar p )  = \chi_{\omega_\eps} (\hat \chi_{\omega}  \bar p)$ a.e. in $\Omega$. Therefore $y^{\hat \chi_{\omega_\eps}\bar p,f,\eps}= y^{\hat \chi_{\omega}\bar p,f,\eps}$. It follows  
\begin{equation}
    \begin{split}
        J(\eps,\hat \chi_{\omega_\eps} \bar p,f) - J(\eps, \hat \chi_\omega \bar p, f) & =  \int_0^T\int_\Omega \gamma (\hat \chi_{\omega_\eps} \bar p)^2-\gamma (\hat \chi_{\omega}\bar p)^2\;dx\;dt   \\
                                                                                       & =  -\frac{1}{4\gamma} \int_0^T\int_{B_\eps(\eta_0)} \bar p^2\;dx\;dt.
\end{split}
\end{equation}
Dividing by $|B_\eps(\eta_0)|$ and passing to the limit $\eps\searrow 0$, using that $\bar p \in C(0,T;B_\delta(\eta_0))$ yields \eqref{eq:second}.

We assume now $\eta_0\in \omega$ and prove \eqref{eq:first}. In the following we choose $\bar p_\eps:= \bar p^{f,\omega_\eps}$. In order to prove \eqref{eq:first} we first recall that for all $\eps \ge 0$ small:
\begin{equation}
    J(0,\hat \chi_{\omega_\eps}\bar p_\eps,f) = \int_{\Omega_T} y_\eps^2 + \frac{1}{4\gamma} \chi_{\omega_\eps}\bar p_\eps^2 \;dx\;dt,
\end{equation}
where $y_\eps$ solves
\begin{equation}\label{eq:aux_problem}
    \partial y_\eps - \Delta y_\eps = \frac{1}{2\gamma}\chi_{\omega_\eps} \bar p_\eps \quad \text{ in } \Omega\times (0,T]
\end{equation}
with initial condition $y_\eps(0) = f$ in $\Omega$. Expressed differently we have $y^{\hat \chi_{\omega_\eps}, f,  \bar p_\eps}$. Note also that $\chi_\omega \chi_{\omega_\eps} = \chi_{\omega_\eps}$, since $\eta_0\in \omega$.  Notice that in the differential quotient on the left hand side of \eqref{eq:first} the given function $\bar p_\eps$ appears on both sides. In order to verify \eqref{eq:first}, we use the averaged adjoint approach. For this purpose we introduce the $\bar p_\eps$-dependent Lagrangian:
\begin{equation}
\begin{split}
    \mathcal L(\eps,y,p;\bar p_\eps)  := & \int_{\DD_T}y^2  +  \frac{1}{4\gamma} \chi_{\omega_\eps} \bar p_\eps^2  \;dx \; dt
	+ \int_{\DD_T} \partial_t y p + \nabla y \cdot \nabla p \; dx\,dt\\
                                & -  \int_{\DD_T}  \frac{1}{2\gamma} \chi_{\omega_\eps} p \bar p_\eps \;dx\,dt + \int_{\DD}(y(0)-f)p(0)\;dx
\end{split}
\end{equation}
for all $p\in Z(0,T)$. 
With this definition of the Lagrangian we have by definition $J(0,\bar p_\eps \hat \chi_{\omega_\eps},f) = \mathcal L(\eps,y_\eps, p; \bar p_\eps)$ for all $ p \in Z(0,T)$. Let $\bar q_\eps $ be the averaged adjoint variable for the Lagrangian associated with $y_\eps$, that is, 
\begin{equation}\label{eq:weak_q_eps}
    \int_0^1 \partial_y \mathcal L(\eps,sy_\eps + (1-s)y_0,\bar q_\eps ;\bar p_\eps)(\psi) = 0 \text{ for all } \psi. 
\end{equation}
or explicitly 
\begin{equation}
   -\partial_t \bar q_\eps  - \Delta \bar q_\eps  = -(y_\eps + y_0) \quad \text{ in } \Omega\times (0,T]
\end{equation}
with the terminal condition $\bar q_\eps (T)=0$. In particular, as in Lemma~\ref{lem:averaged_topo} , one may show that $q_\eps \to \bar p$ in $C(0,T;B_\delta(\eta_0))$ for $\delta >0$ small. This will be used in the last step of this proof. Note that by the mean value theorem we have 
\begin{equation}
    \mathcal L(\eps,y_\eps,\bar q_\eps ;\bar p_\eps) - \mathcal L(\eps,y_0,\bar q_\eps ;\bar p_\eps) = \int_0^1 \partial_y \mathcal L(\eps,sy_\eps + (1-s)y_0,\bar q_\eps ; \bar p_\eps)(y_\eps - y_0)\;ds.
\end{equation}
Choosing in \eqref{eq:weak_q_eps} the test function $\psi := y_\eps-y_0$ implies $\mathcal L(\eps,y_\eps,\bar q_\eps ;\bar p_\eps) = \mathcal L(\eps,y_0, \bar q_\eps ;\bar p_\eps)$. Using that the Lagrangian $p\mapsto \mathcal L(\eps,y_\eps,p;\bar p_\eps)$ is constant on $Z(0,T)$, this implies in turn that $\mathcal L(\eps,y_\eps,p;\bar p_\eps) = \mathcal L(\eps,y_0, \bar q_\eps ;\bar p_\eps)$ for all $p\in Z(0,T)$. Therefore one readily checks
\begin{equation}\label{eq:difference_averaged}
    \begin{split}
        J(0,\hat\chi_{\omega_\eps} \bar p_\eps,f) - J(0, \hat \chi_\omega \bar p_\eps, f) & = \mathcal L(\eps,y_\eps,0;\bar p_\eps)-\mathcal L(0,y_0,0;\bar p_\eps) \\
                                                                                     & =  \mathcal L(\eps,y_0,\bar q_\eps ;\bar p_\eps)-\mathcal L(0,y_0,\bar q_\eps;\bar p_\eps)\\
                                                                                                          &= \int_0^T\int_{B_\eps(0)} -\frac{1}{4\gamma}\bar p_\eps^2 + \frac{1}{2\gamma} \bar p_\eps \bar q_\eps \;dx\;dt.
    \end{split}
\end{equation}
Since according to Lemma~\ref{lem:averaged_topo} the function $\bar q_\eps $ converges in $C(0,T;B_\delta(\eta_0))$ to $\bar p$ for $\delta>0$ small, we obtain 
by dividing \eqref{eq:difference_averaged} by $|B_\eps(0)|$ and letting $\eps$ go to zero that 
\begin{equation}
    \lim_{\eps\searrow 0} \frac{J(0,\hat \chi_{\omega_\eps} \bar p_\eps,f) - J(0, \hat \chi_\omega \bar p_\eps, f)}{|B_\eps(0)|} =  \int_0^T \frac{1}{4\gamma} (\bar p(\eta_0,t))^2 \;dt.
\end{equation}
This shows that \eqref{eq:first} holds for $\eta_0\in \omega$ also.  For $\eta_0\in \Omega\setminus\overline\omega$ the equality \eqref{eq:second} is proved similarly. 

\end{proof}

\noindent
\textbf{Proof of Theorem~\ref{thm:diff_G_top}}
(Step 1) We fix $f\in K$ and derive the topological derivative of $\omega\mapsto \mathcal J_1(\omega,f)$. We introduce $J(\eps,u,f) := \int_{\Omega_T} (y^{u,f,\eps})^2 + \gamma u^2 \;dx \;dt$. 
Recall the definition of ${\tilde G}$ in \eqref{eq:lagrangian_topo} and  testing  the averaged adjoint equation \eqref{eq:averaged_weak}  with $\varphi = y^{u,f,\eps} - y^{u,f\omega}$, we have for all $(\eps,u,f)\in [0,1]\times \UU \times K$
\ben\label{eq:cost_lagrange}
J(\eps,u,\y) = {\tilde G}(\eps,u,\y,y^{u, f ,\omega},q^{u,f,\eps}),
\een
which reduces for $\eps = 0$ to 
\begin{equation}\label{eq:cost_lagrange2}
    J(0,u,\y) = {\tilde G}(0,u,\y,y^{u, f ,\omega},p^{u,f,\omega}).
\end{equation}

The unique solution of the control problem associated to $f \in K$ and $\omega_\eps$, $\eps>0$, is denoted by $\bar u_\eps := \bar u^{f,\omega_\eps}$. 
The associated unique  adjoint and averaged adjoint variable are denoted $\bar p_\eps = p^{\bar u_\eps,f,\omega_\eps}$ and $\bar q_\eps = q^{\bar u_\eps,f,\omega_\eps}$, respectively. Note $\bar p_0 = \bar q_0 = p^{\bar u,f,\omega}$.

In the following we use the abbreviation $\hat \chi_{\omega_\eps} = (2\gamma)^{-1} \chi_{\omega_\eps}$. By \eqref{eq:cost_lagrange} and \eqref{eq:cost_lagrange2}, we have
\begin{equation}\label{eq:lagrange_equality}
    J(\eps,\bar u_\eps,f)-J(0,\bar u_\eps,f) = -\text{sgn}_{\omega}(\eta_0)\int_0^T\int_{B_\eps(\eta_0)} \bar u_\eps \bar q_\eps\;dx\;dt.
\end{equation}
 This implies with $\bar u_\eps = \hat \chi_{\omega_\eps}\bar p_\eps$:
\begin{equation}\label{eq:lagrange_equality2}
    J(\eps, \bar u_\eps, f) - J(0, \bar u, f) = J(0,\bar u_\eps,f) - J(0,\bar u, f) -\text{sgn}_{\omega}(\eta_0)\frac{1}{2\gamma}\int_0^T\int_{B_\eps(\eta_0)} \chi_{\omega_\eps} \bar p_\eps \bar q_\eps\;dx\;dt.
\end{equation}
Notice that the last term on the right hand side is zero if $\eta_0\in \omega$. 
By optimality of $ \bar u$ and since $\hat \chi_\omega \bar p_\eps\in L_2(0,T;L_2(\Omega))$, we have $J(0, \bar u, f)\le J(0,\hat\chi_{\omega} \bar p_\eps, f)$ and consequently
\begin{equation}\label{eq:limit_proof1}
    \begin{split}
        J(\eps, \bar u_\eps, f) - J(0, \bar u, f)  \ge &  J(0,\hat\chi_{\omega_\eps} \bar p_\eps ,f) - J(0, \hat \chi_{\omega}\bar p_\eps, f) \\
                                                    & -\text{sgn}_{\omega}(\eta_0)\frac{1}{2\gamma}\int_0^T\int_{B_\eps(\eta_0)} \chi_{\omega_\eps} \bar p_\eps \bar q_\eps\;dx\;dt.
\end{split}
\end{equation}

On the other hand, $\bar u = \hat \chi_\omega \bar p$ and by optimality of $\bar u_\eps$ we have $J(\eps,\bar u_\eps,f) \le  J(\eps,\hat \chi_{\omega_\eps}\bar p, f)$, thus utilizing again \eqref{eq:cost_lagrange}, \eqref{eq:cost_lagrange2} gives:
\begin{align}
    \begin{split}
        J(\eps , \bar u_\eps, f) - J(0, \bar u, f) & =  J(\eps,\bar u_\eps,f) - J(\eps,\bar u, f) 
                                                + J(\eps,\bar u,f) - J(0,\bar u, f)\\
                                                   & = J(\eps,\bar u_\eps,f) - J(\eps,\bar u,f) - \text{sgn}_{\omega}(\eta_0)\int_0^T\int_{B_\eps(\eta_0)} \bar u \bar q_\eps\;dx\;dt\\
                                                   &\le  J(\eps, \hat \chi_{\omega_\eps} \bar p,f) - J(\eps, \bar u,f) - \text{sgn}_{\omega}(\eta_0) \int_0^T\int_{B_\eps(\eta_0)} \bar u \bar q_\eps\;dx\;dt
\end{split}
\end{align}
and consequently replacing  $\bar u = \hat \chi_\omega \bar p$
\begin{equation}\label{eq:limit_proof2}
    \begin{split}
    J(\eps , \bar u_\eps, f) - J(0, \bar u, f) \le& \; J(\eps, \hat \chi_{\omega_\eps} \bar p,f) - J(\eps,\hat\chi_{\omega} \bar p,f)\\
                                                  & - \text{sgn}_{\omega}(\eta_0) \frac{1}{2\gamma}\int_0^T\int_{B_\eps(\eta_0)} \chi_\omega \bar p \bar q_\eps\;dx\;dt.
\end{split}
\end{equation}
Notice that the last term on the right hand side is zero if $\eta_0\in \Omega\setminus\overline \omega$.

We are now dividing \eqref{eq:limit_proof1} and \eqref{eq:limit_proof2} by $|B_\eps(\eta_0)|$ and want to pass to the limit $\eps\searrow0 $.  Since the family $(\bar u_\eps)$ is bounded, due to the form of the cost functional, for every null sequence $(\eps_n)$, there is a subsequence $(\eps_{n_k})$ and $\bar u\in L_2(0,T;L_2(\Omega))$, such that $u_{\eps_{n_k}}\rightharpoonup \bar u$. Since $\bar u$ is the unique solution to the control problem, we conclude that the whole family $(\bar u_\eps)$ converges to $\bar u$. As in Lemma~\ref{lem:averaged_topo} we can argue that $\bar q_\eps \to \bar p_0$ in $C([0,T];B_\delta(\eta_0))$ as well as 
$\bar p_\eps  = p^{\bar u_\eps, f,\omega_\eps} \to \bar p_0$ and $ p^{\chi_{\omega_\eps}\bar p, f, \omega_\eps} \to \bar p_0$. Therefore dividing \eqref{eq:limit_proof1} and \eqref{eq:limit_proof2} by $|B_\eps(\eta_0)|$, using Lemma~\ref{lem:aux_L2} and passing to the limit $\eps\searrow 0$ we find for $\eta_0\in \omega$
\begin{equation}\label{eq:limit__1}
    \limsup_{\eps\searrow 0}\frac{J(\eps , \bar u_\eps, f) - J(0, \bar u, f)}{|B_\eps(\eta_0)|}\le \frac{1}{4\gamma}\int_0^T  (p^{\bar u, f,\omega}(\eta_0,t))^2\; dt
\end{equation}
and 
\begin{equation}\label{eq:limit__2}
    \liminf_{\eps\searrow 0}  \frac{J(\eps,\bar u_\eps,f) - J(0,\bar u, f)}{|B_\eps(\eta_0)|} \ge  \frac{1}{4\gamma}\int_0^T  (p^{\bar u, f,\omega}(\eta_0,t))^2\; dt
.
\end{equation}
For $\eta_0\in \Omega\setminus\overline\omega$, the inequalities \eqref{eq:limit__1} and \eqref{eq:limit__2} hold with $(4\gamma)^{-1}$ replaced by $-(4\gamma)^{-1}$.  This shows that the topological derivative of  $\omega\mapsto \mathcal J_1(\omega,f)$ exists at every point $\eta_0\in \Omega\setminus\partial \omega$ and proves the formula.

To conclude the proof of the case $L_2(\Omega)$ we invoke the Lemma~\ref{lem:danskin2} of the appendix with (here we use the notation $\eps$ instead of $t$)
\begin{equation}
    \mathcal G(\eps,f):= \mathcal J(\omega_\eps,f), \quad \ell(\eps):= |B_\eps(\eta_0)|. 
\end{equation}

In view of the previous part of the proof we have for all $f$
\begin{equation}
    \lim_{t\searrow 0} \frac{\mathcal G(t,f)-\mathcal G(0,f) }{\ell(t)} =:\partial_\ell \mathcal G(0^+,\bar f)
\end{equation}
exists and is equal to the topological derivative of $\mathcal J_1(\omega,f)$ at $\eta_0$. Moreover, for every null-sequence $(\eps_n)$ and every $(f_n)\in K$, we find a subsequences $(\eps_{n_k})$ and $(f_{n_k})$ such that for some $\bar f\in K$, $\|\bar f\|_{H^1(\Omega)}\le1$ and
\begin{equation}
    f_{n_k}\rightharpoonup \bar f \quad \text{ in } H^1(\Omega). 
\end{equation}
It is readily checked that also $\bar q^{u_{n_k},\bar f_{n_k},\omega_{\eps_{n_k}}}$ and $\bar p^{u_{n_k},\bar f_{n_k},\omega_{\eps_{n_k}}}$ converge strongly to $p^{\bar u,\bar f, \omega}$ in this situation. Therefore the previous part of the proof yields (that means the proof leading to \eqref{eq:limit__1} and \eqref{eq:limit__2})
\begin{equation}
            \lim_{k\rightarrow  \infty} \frac{\mathcal G( t_{n_k} ,\bar{f}_{t_{n_k}}) -
			\mathcal G(0,\bar f_{t_{n_k}})}{\ell(t_{n_k})} = \partial_\ell \mathcal G(0^+,\bar f).
\end{equation}
Therefore Assumption~(A2) of Lemma~\ref{lem:danskin2} is also satisfied. Introduce 
\begin{equation}
g_1(\eps):= \max_{\substack{f\in K\\\|f\|_{H^1(\Omega)}=1}} \mathcal J_1(\omega_\eps,f)
\end{equation}
and 
\begin{equation}
    \mathfrak V_1(\eps):= \{f\in K: \max_{\substack{f\in K\\\|f\|_{H^1(\Omega)\le1}}} \mathcal J_1(\omega_\eps,f) = \mathcal J_1(\omega_\eps,\bar f)\}.
\end{equation}
We conclude from Lemma~\ref{lem:danskin2}:
\begin{equation}
\frac{d}{d\ell} g_1|_{t=0} = \lim_{\eps\searrow 0}  \frac{g_1(\eps) - g_1(0)}{|B_\eps(0)|} = \max_{f\in \mathfrak V_1(0)} \mathcal T\mathcal J_1(\omega,f).
\end{equation}
 This concludes the proof that the topological derivative of $\mathcal J_2$ exists for all $\eta_0\in \Omega\setminus\partial\omega$ and proves the formula \eqref{eq:topo_main_J2_}.

\underline{Case $U = \mathbf R$:} Let $\eta_0\in \Omega\setminus\partial\omega$. In order to prove Theorem~\ref{thm:diff_G_top} for $U = \mathbf R$, we apply Lemma~\ref{lem:maxmin} with
\ben
G(\eps, u, f) := J(\eps,u,f) = {\tilde G}(\eps,u,f,y^{u, f ,\omega},q^{u, f,\eps}),
\een
$\mathfrak U := \UU$, $\mathfrak V := \{ f\in K:\; \|f\|_V\le 1 \}$ and
$\ell(\eps) = |B_\eps(\eta_0)|$.  Since the minimisation problem in \eqref{eq:cost} is uniquely solvable and by Lemma~\ref{lem:maximiser} Assumptions~(A0),(A1) are satisfied.  As for Assumption~(A2) we note that for all $f\in K$, we have 
\begin{equation}\label{eq:ass_A2}
    G(\eps,\bar u^{f,\omega_\eps}, f)- G(0,\bar u^{f,\omega_\eps}, f)  =-\text{sgn}_{\omega}(\eta_0)\int_0^T\int_{B_\eps(\eta_0)} \bar u^{f,\omega_\eps} \bar q^{\bar u, f,\omega_\eps}\;dx\;dt,
\end{equation}
where $\bar q^{\bar u, f,\omega_\eps}$ denotes the averaged adjoint variable.  As the optimal control $\bar u^{f,\omega_\eps}$ is given by  
\begin{equation}\label{eq:control_ R}
    \bar u^{f,\omega_\eps}(t) :=   \bar u_\eps(t) = \frac{1}{2|\Omega|\gamma} \int_{\omega_\eps} \bar p^{\bar u_\eps, f, \omega_\eps}(t,x)\;dx
\end{equation}
it is readily checked that $\bar u^{f,\omega_\eps}$ converges to $\bar u^{f,\omega}$ in $C([0,T];B_\delta(\eta_0))$ for $\delta >0$ small. Moreover also $\bar q^{\bar u, f,\omega_\eps}$ converges to $\bar p^{\bar u,f, \omega}$ in $C([0,T];B_\delta(\eta_0))$. Therefore dividing \eqref{eq:ass_A2} by $|B_\eps(\eta_0)|$ and passing to $\eps \searrow 0$ gives 
\begin{equation}
    \lim_{\eps\searrow 0}    \frac{G(\eps,\bar u^{\eps,f}, f)- G(0,\bar u^{\eps,f}, f)}{|B_\eps(\eta_0)|} = - \text{sgn}_\omega(\eta_0) \int_0^T \bar u^{\omega,f}(t) \bar p^{\bar u,f,\omega}(t,\eta_0)\;dt.
\end{equation}
In the same fashion one can check that 
\begin{equation}
    \lim_{\eps\searrow 0}    \frac{G(\eps,\bar u^{\omega,f}, f)- G(0,\bar u^{\omega,f}, f)}{|B_\eps(\eta_0)|} = - \text{sgn}_\omega(\eta_0) \int_0^T \bar u^{\omega,f}(t) \bar p^{\bar u,f,\omega}(t,\eta_0)\;dt.
\end{equation}
It follows that
\begin{equation}\label{eq:formula_partialG_R}
\partial_{\ell} G(0^+,\bar u^{\omega,f},f) =  - \text{sgn}_\omega(\eta_0) \int_0^T \bar u^{f,\omega}(t) \bar p^{\bar u,f,\omega}(t,\eta_0)\;dt.
\end{equation}
Let us check Assumptions~(A3). Assumption~(A4) is checked similarly and left to the reader. Let a real null-sequences $(\eps_n)$ in $(0,\tau]$ and let a sequence $f_{\eps_n}$ in $\mathfrak V(t_n)$ be given. Then there is a subsequence $(\eps_{n_k})$, $f\in K$ with $\|f\|_{H^1(\Omega)}\le 1$, such that $f_{n_k}\rightharpoonup f$ weakly in $H^1(\Omega)$ and also $\bar u^{f_{n_k},\omega_{\eps_{n_k}}}\rightharpoonup \bar u^{f,\omega}$ weakly in $L_2([0,T];\mathbf R)$. By Lemma~\ref{lem:averaged_topo} we find, possible choosing another subsequence, that 
\begin{align}\label{eq:adjoint_av_adjoint}
    \begin{split}
    \bar p^{\bar u_{\eps_{n_k}, f_{n_k},\omega_{\eps_{n_k}}}} & \to \bar p^{\bar u, f, \omega}  \quad \text{ in } C([0,T];B_\delta(\eta_0)), \\
    \bar q^{\bar u_{\eps_{n_k}, f_{n_k},\omega_{\eps_{n_k}}}} & \to \bar p^{\bar u, f, \omega}  \quad \text{ in } C([0,T];B_\delta(\eta_0)),
\end{split}
\end{align}
where for simplicity we use the notation $\bar u_{\eps_n} := \bar u^{\omega_{\eps_n},f_{\eps_n}}$.
Moreover, we check as in \eqref{eq:lagrange_equality}:
\begin{equation}
G(\eps_{n_k}, \bar u_{\eps_{n_k}}, f_{n_k}) - G(0, \bar u_{\eps_{n_k}}, f_{n_k}) = -\text{sgn}_{\omega}(\eta_0)\int_0^T\int_{B_\eps(\eta_0)} \bar u^{f_{n_k},\omega_{\eps_{n_k}}} \bar q^{\bar u_{\eps_{n_k},f_{n_k}},\omega_{\eps_{n_k}}}\;dx\;dt.
\end{equation}
In view of \eqref{eq:control_ R} and \eqref{eq:adjoint_av_adjoint} we have that $\bar u_{\eps_{n_k}}\to \bar u^{\omega,f}$ in $C([0,T];B_\delta(\eta_0))$. Thus
 \begin{equation}
     \lim_{k\to\infty}     \frac{G(\eps_{n_k}, \bar u_{\eps_{n_k}}, f_{n_k}) - G(0, \bar u_{\eps_{n_k}}, f_{n_k})}{|B_{\eps_{n_k}}(\eta_0)|} = - \text{sgn}_\omega(\eta_0) \int_0^T \bar u^{\omega,f}(t) \bar p^{\bar u,f,\omega}(t,\eta_0)\;dt.
 \end{equation} 
Now by Lemma~\ref{lem:maxmin} the topological derivative follows from \eqref{eq:formula_partialG_R}.

\black

\section{Numerical approximation of the optimal shape problem}\label{sec:six}
In this section we discuss the formulation of numerical methods for optimal positioning and design which are based on the formulae introduced in previous sections. We begin by introducing the discretisation of the system dynamics and the associated linear-quadratic optimal control problem. Then, the optimal actuator design problem is addressed by approximating the shape and topological derivatives, which are embedded into a gradient-based approach and a level-set method, respectively.

\subsection{Discretisation and Riccati equation}

Let $T>0$.  We choose the spaces $K=H^1_0(\DD)$ and $\mathcal U=\R$, so that the control space $\UU$ is equal to $L_2(0,T;\R)$. The cost functional reads
\begin{align}
\mathcal{J}_1(\omega,\y) = \underset{u\in\UU}{\inf}J(\omega,u,\y)=\int\limits_0^T\|y(t)\|^2_{L^2(\Omega)}+\gamma |u(t)|^2\,dt + \alpha (|\omega|-c)^2, \quad \alpha >0,\label{eq:penal}
\end{align}
where $y$ is the solution of the state equation
\begin{alignat}{2}\label{eq:state_numerics1}
\partial_t y(x,t)   =\sigma\Delta y(x,t) +\chi_\omega(x) u(t)&  & &\quad (x,t)\in  \Omega\times (0,T], \\\label{eq:state_numerics2}
y(x,t)=0 &  & &\quad (x,t)\in  \partial \Omega\times (0,T],\\\label{eq:state_numerics3}
y(0,x)=\y &  & &\quad x\in \Omega\,,
\end{alignat}
and $\Omega$ is a polygonal domain. The cost $J$ in  \eqref{eq:penal} includes the additional term $\alpha (|\omega|-c)^2$ which accounts for the volume constraint $|\omega|=c$ in a penalty fashion. This slightly modifies the topological derivative formula, as it will be shown later. We derive a discretised version of the dynamics \eqref{eq:state_numerics1}-\eqref{eq:state_numerics3} via the method of lines. For this, we introduce a family of finite-dimensional approximating subspaces $V_h\subset {H^1_0(\Omega)}$, where h stands for a discretisaton parameter typically corresponding to gridsize  in finite elements/differences, but which can also be related to a spectral approximation of the dynamics. For each $f_h\in V_h$, we consider a finite-dimensional nodal/modal expansion of the form
\begin{equation}
f_h = \sum_{j=1}^{N} f_{j}\phi_{j}\,, \quad f_j\in\R\,,\phi_j\in V_h\,,
\end{equation}
where $\{\phi_i\}_{i=1}^{N}$ is a basis of $V_h$.  We denote the vector of coefficients associated to the expansion by $\underline f_h:=(f_1,\ldots,f_N)^\top$. In the method of lines, we approximate  the solution $y$ of \eqref{eq:state_numerics1}-\eqref{eq:state_numerics3} by a function $y_h$ in $C^1([0,T];V_h(\DD))$ of the type $$y_h(x,t) = \sum_{j=1}^{N} y_{j}(t)\phi_{j}(x)\,,$$
for which we follow a standard Galerkin ansatz. Inserting  $y_h$ in the weak formulation \eqref{eq:weak_form} and testing with $\varphi = \phi_{k}$, $k=1,\ldots, N$ leads to the following system of ordinary equations,
\ben\label{eq:discretisation}
\dot{\underline{y}}_h(t)=A_h\underline{y}_h(t)+B_hu_h(t)\quad t\in (0,T], \quad \underline{y}_h(0) = \underline{f}_h,
\een
where $M_h,K_h\in\R^{N\times N}$ and $B_h, \underline{f}_h \in \R^{N}$ are given by
\ben
\begin{split}
A_h=-M^{-1}_hS_h\,,\quad B_h = M^{-1}\hat{B}_h \,,\quad \underline{f}_h := M^{-1}_h\hat{\underline{f}}_h\,,
\end{split}
\een
with
\ben
\begin{split}
(M_h)_{ij}=(\phi_{i},\phi_{j})_{ L_2}\,,\quad (S_h)_{ij}=\sigma(\nabla \phi_{i},\nabla\phi_{j})_{L_2}\,,\\
(\hat{B}_h)_{j}=( \chi_\omega,\phi_{j})_{ L_2}\,,
\quad  (\hat f_h)_j :=(f,\phi_{j})_{L_2}, \quad  i,j=1,\ldots,N\,.
\end{split}
\een
Note that $\underline{y}_h = \underline{y}_h^{u_h,\underline{\y}_h,\omega} $ depends on $\y_h$, $u_h$, and $\omega$.  Given a discrete initial condition $\y_h\in V_h(\DD)$, the discrete costs are defined by
\ben\label{eq:min_problem_discrete}
\mathcal J_{1,h}(\omega,\y_h) := \underset{u_h\in\UU}{\inf}J_h(\omega,u,\y_h)=\underset{u_h\in\UU}{\inf} \int\limits_0^T (\underline{y}_h)^\top M_h\underline{y}_h+ \gamma | u_h(t)|^2\,dt+ \alpha (|\omega|-c)^2,
\een
and
\ben
\mathcal J_{2,h}(\omega) = \sup_{\substack{f_h\in V_h \\ \|f_h\|_{H^1}\le 1 }}\mathcal J_{1,h}(\omega,f_h).
\een
The  solution of the linear-quadratic optimal control problem in \eqref{eq:min_problem_discrete} is given by
\[\bar{u}^{\omega,\y_h}(t)=-\gamma^{-1}B_h^\top \Pi_h(t)\underline{y}_h\,,\]
where $\Pi_h\in R^{N\times N}$ satisfies the differential matrix Riccati equation
\[-\frac{d}{dt}\Pi_h=A_h\Pi_h + \Pi_hA_h - \Pi_hB_h\gamma^{-1} B_h^\top\Pi_h + M_h \quad\text{ in } [0,T), \quad\Pi_h(T)=0\,.
\]
The coefficient vector of the discrete adjoint state $\bar p_h^{\y_h,\omega}(t)$ at time $t$ can be recovered directly by
$\underline{\bar{p}}_h^{\y_h,\omega}(t)=2\Pi_h(t)\underline{y}_h(t)$. Let us define the discrete analog of \eqref{eq:frak_X2_r},

\ben
\mathfrak X_{2,h}(\omega) := \{\bar f_h\in V_h:\; \sup_{\substack{f_h\in V_h \\ \|f_h\|_{H^1}\le 1 }}\mathcal J_{1,h}(\omega,f_h) = \mathcal J_{1,h}(\omega,\bar f_h)\}.
\een
Since we have the relation
\ben
\mathcal J_{1,h}(\omega,\y_h) = (\Pi_h(0)\underline{\y}_h, \underline{\y}_h)_{L_2} + \alpha (|\omega|-c)^2,
\een
the maximisers $\y_h\in \mathfrak X_{2,h}(\omega)$ can be computed by solving the generalised Eigenvalue problem: find $(\lambda_h,\y_h)\in \R\times V_h$ such that
\ben\label{eq:generalised_EV_problem}
(\Pi_h(0) - \lambda_h S_h)\underline{\y}_h =  0.
\een
The biggest $\lambda_h = \lambda_h^{max}$ is then precisely the value $\mathcal J_{2,h}(\omega)$  and the normalised Eigenvectors for this Eigenvalue are the elements in $\mathfrak X_{2,h}(\omega)$:
\ben
\mathfrak X_{2,h}(\omega) = \{ \y_h:  \underline{\y}_h \in \text{ker}((\Pi_h(0) - \lambda_h^{max} K_h)) \;\text{ and }  \; \|\underline{\y}_h\|=1  \}.
\een

\begin{remark}\label{rem:topo_singleton}
	It is readily checked that if $\y_h \in \mathfrak X_{2,h}(\omega)$, then also $-\y_h\in \mathfrak X_{2,h}(\omega)$.  So if the Eigenspace for the largest eigenvalue is one-dimensional we have $\mathfrak X_{2,h}(\omega) = \{\y_h, -\y_h\}$. However, we know according to Corollary~\ref{cor:shape_invariant} (now in a  discrete setting) that
	\ben
	\mathcal T\mathcal J_{1,h}(\omega,\y_h)(\eta_0) = \mathcal T\mathcal J_{1,h}(\omega,-\y_h)(\eta_0)
	\een
	for all $\eta_0\in \DD\setminus \partial \omega$ and $\y_h\in V_h$. Hence we can evaluate the topological derivative $\mathcal T\mathcal J_{2,h}(\omega)$ by picking either
	$\y_h$ or $-\y_h$. A similar argumentation holds for the shape derivative.
\end{remark}

\subsection{Optimal actuator positioning: Shape derivative}
Here we precise the gradient algorithm based upon a numerical realisation of the shape derivative.
We consider \eqref{eq:state_numerics1}-\eqref{eq:state_numerics3} with its discretisation \eqref{eq:discretisation}. Given a simply connected
actuator $\omega_0\subset \DD$ we employ the shape derivative
of $\mathcal J_1$ to find the optimal position. Let $\y_h\in V_h$. According to Corollary~\ref{cor:shape_derivative_J1} the derivative of $\mathcal J_{1,h}$ in the case $\mathcal U = \R$ is given by
\ben\label{eq:shape_DJ1_numerics}
D\mathcal J_{1,h}(\omega,\y_h)(X) = -\int_{\partial \omega} \bar u_h^{\y_h,\omega}(t) \int_0^T \bar p_h^{\y_h,\omega}(s,t) (X(s)\cdot \nu(s))\; ds\;dt
\een
for $X\in \ac C^1(\overbar \DD, \R^d)$. We assume that $\omega\Subset \DD$. We define the vector $b\in \R^d$ with the components
\ben
b_i := \int_{\partial \omega} \bar u_h^{\y_h,\omega}(t) \int_0^T \bar p_h^{\y_h,\omega}(s,t) (e_i\cdot \nu(s))\; ds\;dt,
\een
where $e_i$ denotes the canonical basis of $\R^d$. From this we can
construct an admissible descent direction by choosing any $\tilde X\in \ac C^1(\overbar \DD, \R^d)$ with $\tilde X|_{\partial \omega} = b$. Then it is obvious that $D\mathcal J_{1,h}(\omega,\y_h)(\tilde X)\le 0$. Let us use the the notation $b=-\nabla \mathcal J_{1,h}(\omega,\y_h)$.
We write $ (\id+t\nabla \mathcal J_{1,h}(\omega,\y_h))(\omega)$ to denote the moved actuator $\omega$ via the vector $b$. Note that only the position, but not the shape of $\omega$ changes by this operation. We refer to this procedure as Algorithm \ref{alg:shape} below.

\begin{algorithm}[H]
\begin{algorithmic}
	\STATE{ \textbf{Input:} $\omega_0\in \mathfrak Y(\DD)$, $\y_h\in V_h$, $b_0 := -\nabla \mathcal J_{1,h}(\omega_0,\y_h)$, $n=0$, $\beta_0>0$, and $\eps >0$. }
	\WHILE{ $|b_n| \ge \epsilon $ }
	\IF{ $\mathcal J_{1,h}((\id+\beta_n b_n)(\omega_n),\y_h) < \mathcal J_{1,h}(\omega_n,\y_h)$ }
	\STATE{	$\beta_{n+1} \gets \beta_n$ }
	\STATE{	$\omega_{n+1} \gets (\id+\beta_n b_n)(\omega_n)$ }
	\STATE{	$b_{n+1} \gets -\nabla \mathcal J_{1,h}(\omega_{n+1},\y_h) $ }
	\STATE{	$n \gets n+1$  }
	\ELSE
	 \STATE decrease $\beta_n$
	\ENDIF
	\ENDWHILE
	\RETURN{ optimal actuator positioning $\omega_{opt}$  }
	\caption{Shape derivative-based gradient algorithm for actuator positioning}\label{alg:shape}
\end{algorithmic}
\end{algorithm}

\subsection{Optimal actuator design: Topological derivative}
As for the shape derivative, we now introduce a numerical approximation of the topological derivative formula which is embedded into a level-set method to generate an algorithm for optimal actuator design, i.e. including both shaping and position.
According to Theorem~\ref{thm:diff_G_top} the discrete topological derivative of $\mathcal J_{1,h}$ is given by
\ben\label{eq:top_discrete}
\begin{displaystyle}
	\mathcal T{\mathcal{J}}_{1,h}(\omega,\y_h)(\eta_0) = \left.
	\begin{cases}
		\int_0^T \overbar u^{\y_h,\omega}_h(t)\bar p_h^{\y_h,\omega}(\eta_0,t)\;dt- 2\alpha(|\omega|-c) & \text{ if } \eta_0 \in \omega, \\
		-\int_0^T\overbar u^{\y_h,\omega}_h(t) \bar p_h^{\y_h,\omega}(\eta_0,t)\;dt + 2\alpha(|\omega|-c) &  \text{ if }  \eta_0 \in  \DD\setminus \overbar \omega,
	\end{cases}
	\right.
\end{displaystyle}
\een

The level-set method is well-established in the context of shape optimisation and shape derivatives \cite{AJT04}. Here we use a level-set method for topological sensitivities as proposed in \cite{AmstutzHeiko06}. We recall that compared to the the formulation based on shape sensitivities, the topological approach has the advantage that multi-component actuators can be obtained via splitting and merging.

For a given actuator $\omega\subset \Omega$, we begin by defining the function
\[
g_h^{\y_h,\omega}(\zeta) =
-\int_0^T\overbar u^{\y_h,\omega}_h(t)\overbar p^{\y_h,\omega}_h(\zeta,t)\;dt + 2\alpha(|\omega|-c), \quad \zeta\in \overbar \DD\,
\]

which is continuous since the adjoint is continuous in space. Note that $\overbar p^{\y_h,\omega}$ and $\overbar u^{\y_h,\omega}$ depend on the actuator $\omega$. For other types of state equations where the shape variable enters into the differential operator (e.g. transmission problems \cite{AM11}) this may not be the case and thus it particular of our setting.
The necessary optimality condition for the cost function $\mathcal J_{1,h}(\omega,\y_h)$ using the topological derivative are formulated as
\ben\label{eq:necessary_g}
\begin{split}
	g_h^{\y_h,\omega}(x) & \le 0 \quad \text{ for all } x\in  \omega,\\
	g_h^{\y_h,\omega}(x) & \ge 0 \quad \text{ for all } x\in \Omega\setminus \overbar \omega.
\end{split}
\een
Since $g_h^{\y_h,\omega}$ is continuous this means that $	g_h^{\y_h,\omega}$ vanishes on $\partial \omega$ and hence
\ben\label{eq:topo_stationary_point}
\int_0^T\overbar u^{\y_h,\omega}_h(t)\overbar p^{\y_h,\omega}_h(\zeta,t)\;dt = 2\alpha(|\omega|-c)\,, \quad \text{ for all } \zeta \in \partial \omega.
\een
An (actuator) shape $\omega$ that satisfies \eqref{eq:necessary_g} is referred to as
stationary (actuator) shape. It follows from \eqref{eq:top_discrete} and  \eqref{eq:necessary_g}, that $g_h^{\y_h,\omega}$ vanishes on the actuator boundary $\partial \omega$ of a stationary shape $\omega$.


We now describe the actuator $\omega$ via  an arbitrary level-set function $\psi_h \in V_h$, such that $\omega=\{x\in\Omega: \psi_h(x)<0\}$ is achieved via an update of an initial guess $\psi_h^0$
\ben
\psi^{n+1}_h = (1-\beta_n)\psi^n_h + \beta_n \frac{g_h^{\y_h,\omega_n}}{\|g_h^{\y_h,\omega_n}\|},\quad \omega_n := \{ x\in\Omega: \psi^n_h(x)<0\},
\een
where $\beta_n$ is the step size of the method. The idea behind this update scheme is the following: if $\psi^n_h(x)<0$ and $g_h^{\y_h,\omega_n}(x)>0$, then we add a positive value to the level-set function, which means that we aim at removing actuator material. Similarly, if $\psi^n_h(x)>0$ and $g_h^{\y_h,\omega_n}(x)<0$, then we create actuator material. In all the other cases the sign of the level-sets remains unchanged. We present our version of the level-set algorithm in \cite{AmstutzHeiko06}, which we refer to as Algorithm \ref{alg:topo}.

\begin{algorithm}[H]
	\begin{algorithmic}
		\STATE{\textbf{Input}: $\psi^0_h\in V_h(\DD)$, $\omega_0:=\{x\in \overbar{\DD},\psi^0_h(x)<0\}$, $\beta_0>0$, $\y_h\in V_h$, and $\eps >0$. }
		
		\WHILE{ $\|\omega_{n+1}-\omega_{n}\|\ge  \eps$ }
		\IF{ $\mathcal J_{1,h}(\{ \psi_h^{n+1}<0\},\y_h) < \mathcal J_{1,h}(\{\psi^n_h<0\},\y_h)$}
		\STATE{ $\psi^{n+1}_h \gets (1-\beta_n) \psi^n_h +  \beta_n \frac{g_h^{\y_h,\omega_n}}{\|g_h^{\y_h,\omega_n}\|}$\;}
		\STATE{	$\beta_{n+1} \gets \beta_n$}
		\STATE{	$\omega_{n+1} \gets \{\psi^{n+1}_h<0\}$}
		\STATE{	$n\gets n+1$ }
		\ELSE
		\STATE{decrease $\beta_n$\;}
		\ENDIF
		\ENDWHILE
		\RETURN{ optimal actuator $\omega_{opt}$ }
		\caption{Level set algorithm for optimal actuator design}\label{alg:topo}
	\end{algorithmic}
\end{algorithm}

Algorithm \ref{alg:topo} is embedded inside a continuation approach over the quadratic penalty parameter $\alpha$ in \eqref{eq:min_problem_discrete},  leading to actuators which approximate the size constraint in a sensible way, as opposed to a single solve with a large value of $\alpha$.

Finally, for the functional $\mathcal J_2(\omega)$ we may employ similar algorithms for shape and topological derivatives. We update the initial condition $f_h\in \mathfrak X_{2,h}(\omega) $ at each iteration whenever the actuator $\omega$ is modified.

\section{Numerical tests}\label{sec:seven}
We present a series of one and two-dimensional numerical tests exploring the different capabilities of the developed approach.

\paragraph{Test parameters and setup} We establish some common settings for the experiments. For the 1D tests, we consider a piecewise linear finite element discretisation with 200 elements over $\Omega=(0,1)$, with $\gamma=10^{-3}$, $\sigma=0.01$, $c=0.2$, and $\epsilon=10^{-7}$. For the 2D tests, we resort to a Galerkin ansatz where the basis set is composed by the eigenfunctions of the Laplacian with Dirichlet boundary conditions over $\Omega=(0,1)^2$. We utilize the first 100 eigenfunctions. This idea has been previously considered in the context of optimal actuator positioning in \cite{M11}, and its advantage resides in the lower computational burden associated to the Riccati solve. The actuator size constraint is set to $c=0.04$.
An important implementation aspect relates to the numerical approximation of the linear-quadratic optimal control problem for a given actuator. For the sake of simplicity, we consider the infinite horizon version of the costs $\cJ_1$ and $\cJ_2$. In this way, the optimal control problems are solved via an Algebraic Riccati Equation approach. The additional calculations associated to $\cJ_2$ and the set $\mathfrak X_2(\omega)$ are reduced to a generalized eigenvalue problem involving the Riccati operator $\Pi_h$. The shape and topological derivative formulae involving the finite horizon integral of $u$ and $p$ are approximated with a sufficiently large time horizon, in this case $T=1000$.
\paragraph{Actuator size constraint} While in the abstract setting the actuator size constraint determines the admissible set of configurations, its numerical realisation follows a penalty approach, i.e. $\cJ_1(\omega,f)$ is as in \eqref{eq:penal},
$$\cJ_1(\omega,f)=\cJ_1^{LQ}(\omega,f)+\cJ_1^{\alpha}(\omega)\,,$$
where $\cJ_1^{LQ}(\omega,f)$ is the original linear-quadratic (LQ) performance measure, and $\cJ_1^{\alpha}(\omega)=\alpha(|\omega|-c)^2$ is a quadratic penalization from the reference size. The cost $\cJ_2$ is treated analogously. In order to enforce the size constraint as much as possible and to avoid suboptimal configurations, the quadratic penalty is embedded within a homotopy/continuation loop. For a low initial value of $\alpha$, we perform a full solve of Algorithm \ref{alg:topo}, which is then used to initialized a subsequent solve with an increased value of $\alpha$. As it will be discussed in the numerical tests, for sufficiently large values of $\alpha$ and under a gradual increase of the penalty, results are accurate within the discretisation order.
\paragraph{Algorithm \ref{alg:topo} and level-set method} The main aspect of Algorithm \ref{alg:topo} is the level-set update of the function $\psi^{n+1}_h$ which dictates the new actuator shape. In order to avoid the algorithm to stop around suboptimal solutions, we proceed to reinitialize the level-set function every 50 iterations. This is a well-documented practice for the level-set method, and in particular in the context of shape/topology optimisation \cite{AJT04,AmstutzHeiko06}. Our reinitialization consists of reinitialising  $\psi^{n+1}_h$  to be the signed distance function of the current actuator. The signed distance function is efficiently computed via the associated Eikonal equation, for which we implement the accelerated semi-Lagrangian method proposed in \cite{AFK15}, with an overall CPU time which is negligible with respect to the rest of the algorithm.
\paragraph{Practical aspects} All the numerical tests have been performed on an Intel Core i7-7500U with 8GB RAM, and implemented in MATLAB. The solution of the LQ control problem is obtained via the \texttt{ARE} command, the optimal trajectories are integrated with a fourth-order Runge-Kutta method in time. While a single LQ solve does not take more than a few seconds in the 2D case, the level-set method embedded in a continuation loop can scale up to approximately 30 mins. for a full 2D optimal shape solve.

\subsection{Optimal actuator positioning through shape derivatives} In the first two tests we study the optimal positioning  problem \eqref{eq:min_constrained_position_problem1} of a single-component actuator of fixed width $0.2$ via the gradient-based approach presented in Algorithm \ref{alg:shape}. Tests are carried out for a given initial condition $y_0(x)$, i.e. the $\cJ_1$ setting.
\paragraph{Test 1} We start by considering $y_0(x)=\sin(\pi x)$, so the test is fully symmetric, and we expect the optimal position to be centered in the middle of the domain, i.e. at $x=0.5$. Results are illustrated in Figure \ref{fig:test1shape}, where it can be observed that as the actuator moves from its initial position towards the center, the cost $\cJ_1$ decays until reaching a stationary value. Results are consistent with the result obtained by inspection (Figure \ref{fig:test1shape} left), where the location of the center of the actuator has been moved throughout the entire domain.
\begin{figure}[!h]
	\centering
	\includegraphics[width=0.29\textwidth]{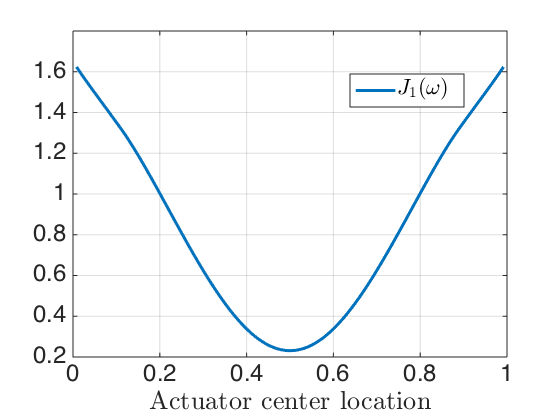}
	\includegraphics[width=0.66\textwidth]{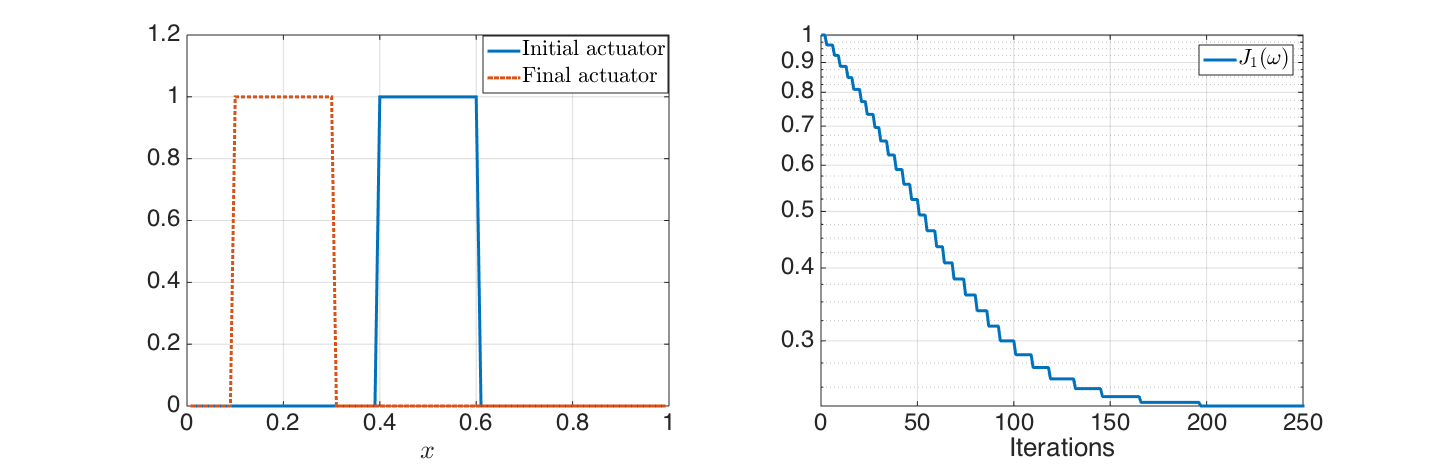}\\
	\caption{Test 1. Left: different single-component actuators with  different centers have been spanned over the domain, locating the minimum value of $\cJ_1$ for the center at $x=0.5$. Center: starting from an initial guess for the actuator far from $0.5$, the gradient-based approach of Algorithm \ref{alg:shape} locates the optimal position in the middle. Right: as the actuator moves towards the center in the subsequent iterations of Algorithm \ref{alg:shape}, the value $\cJ_1$ decays until reaching a stationary point.}
	\label{fig:test1shape}
\end{figure}

\paragraph{Test 2} We consider the same setting as in the previous test, but we change the initial condition of the dynamics to be $y_0(x)=100|x-0.7|^4+x(x-1)$, so the setting is asymmetric and the optimal position is different from the center. Results are shown in Figure \ref{fig:test2shape}, where the numerical solution coincides with the result obtained by inspecting all the possible locations.
\begin{figure}[!h]
	\centering
	\includegraphics[width=0.29\textwidth]{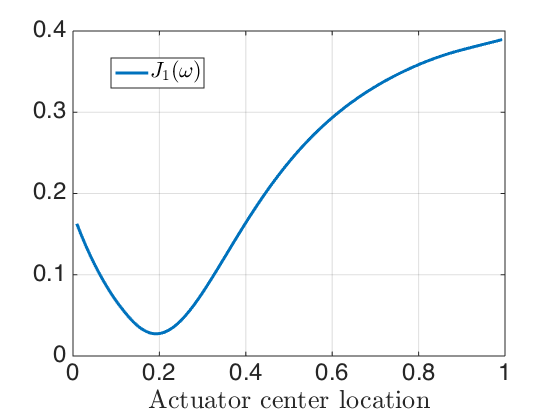}
	\includegraphics[width=0.66\textwidth]{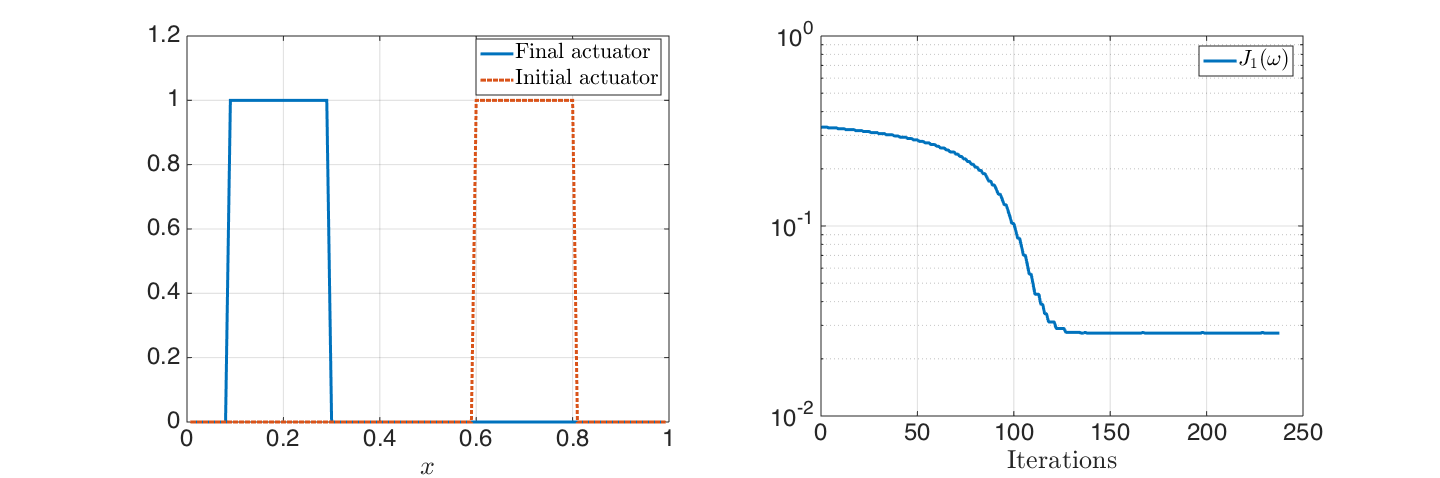}\\
	\caption{Test 2. Left: inspecting different values of $\cJ_1$ by spanning actuators with different centers, the optimal center location is found to be close to $0.2$ . Center: the gradient-based approach steers the initial actuator to the optimal position. Right:  the value $\cJ_1$ decays until reaching a stationary point, which coincides with the minimum for the first plot on the left.}
	\label{fig:test2shape}
\end{figure}

\subsection{Optimal actuator design through topological derivatives} In the following series of experiments we focus on 1D optimal actuator design, i.e. problems \eqref{eq:min_constrained_problem1} and \eqref{eq:min_constrained_problem} without any further parametrisation of the actuator, thus allowing multi-component structures. For this, we consider the approach combining the topological derivative, with a level-set method, as summarized in Algorithm \ref{alg:topo}.
\paragraph{Test 3} For $y_0(x)=\max(\sin(3\pi x),0)^2$, results are presented in Figures \ref{fig:t11d} and \ref{fig:t1it} . As it can be expected from the symmetry of the problem, and from the initial condition, the actuator splits into two equally sized components. We carried out two types of tests, one without and one with a continuation strategy with respect to $\alpha$. Without a continuation strategy, choosing $\alpha=10^3$ we obtain the result depicted in Figure \ref{fig:t11d} (b). With a continuation strategy, as the  penalty increases, the size of the components decreases until approaching the total size constraint.  The behavior of this continuation approach is shown in Table \ref{tabtest1}. When $\alpha$ is increased, the size of the actuator tends to $0.2$, the reference size, while the LQ part of $\cJ_1$, tends to a stationary value. For a final value of $\alpha=10^4$, the overall cost $\cJ_1$ obtained via the continuation approach is approx. 80 times smaller than the value obtained without any initialisation procedure, see Figure \ref{fig:t11d} (b)-(d).
Figure \ref{fig:t1it} illustrates some basic relevant aspects of the level-set approach, such as the update of the shape (left), the computation of the level-set update upon $\beta_n$ and $\psi_h^{n}$ (middle), and the decay of the value $J_1$ (right).

\begin{figure}[!h]
	\centering
	\subfloat[$y_0(x)$]{\label{fig:t1a}\includegraphics[width=0.24\textwidth]{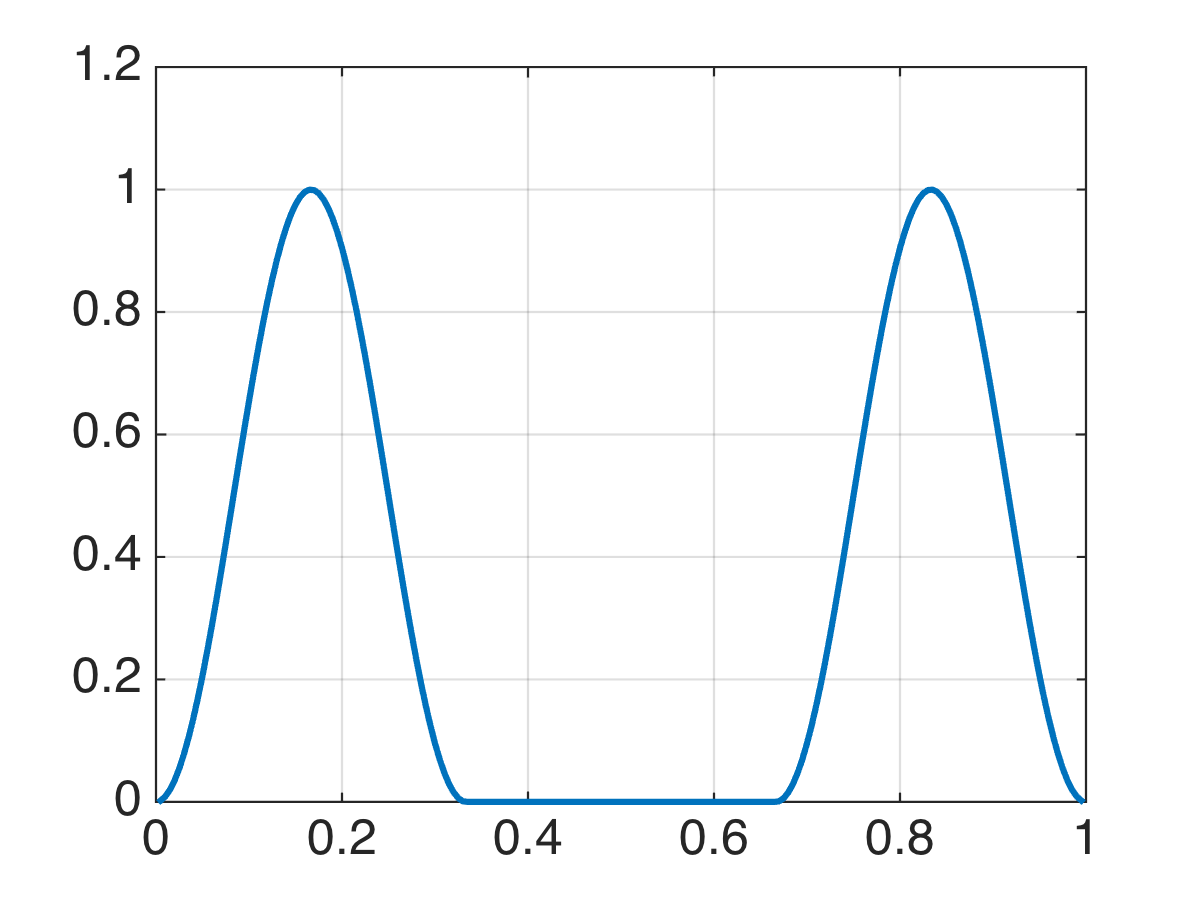}}
	\subfloat[$\alpha=10^3$, no init.]{\label{fig:t1b}\includegraphics[width=0.24\textwidth]{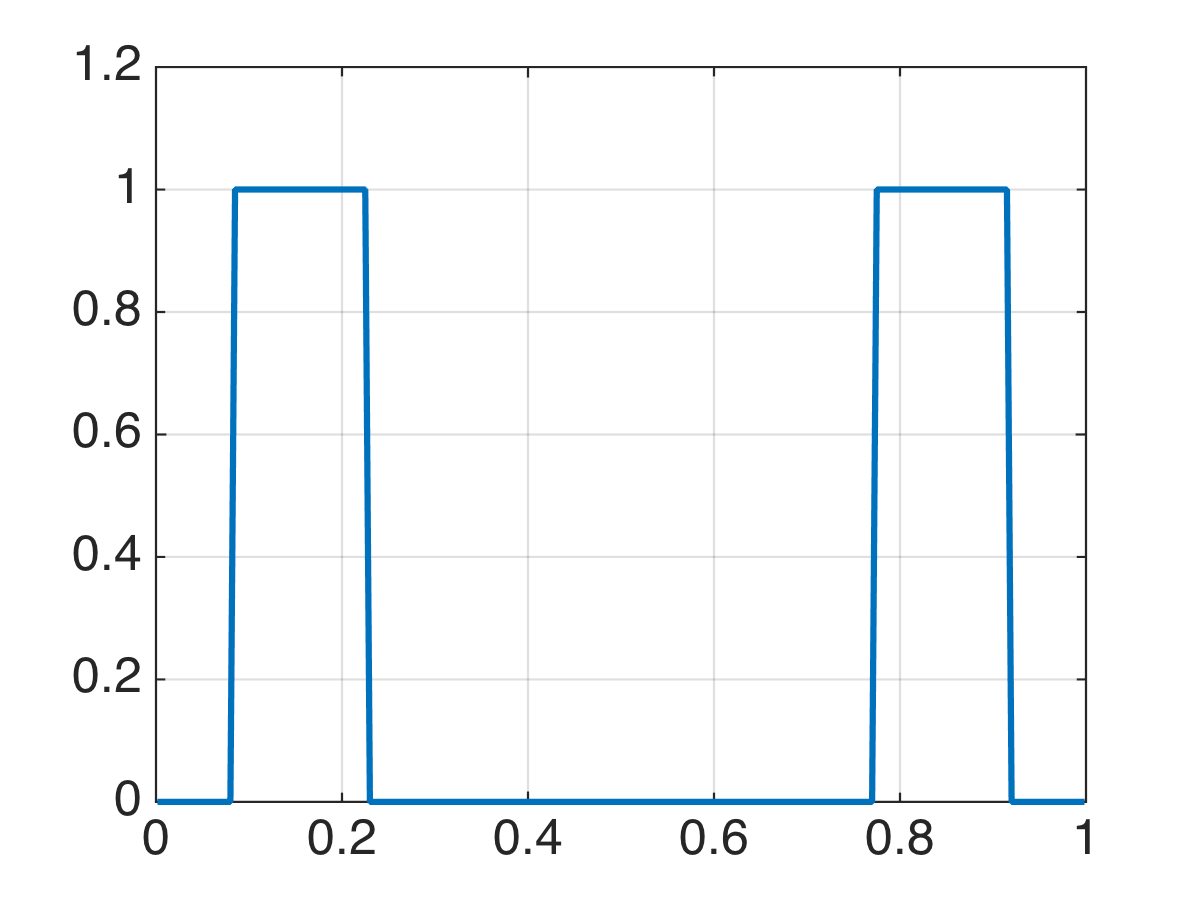}}
	\subfloat[$\alpha=10^{-1}$]{\label{fig:t1c}\includegraphics[width=0.24\textwidth]{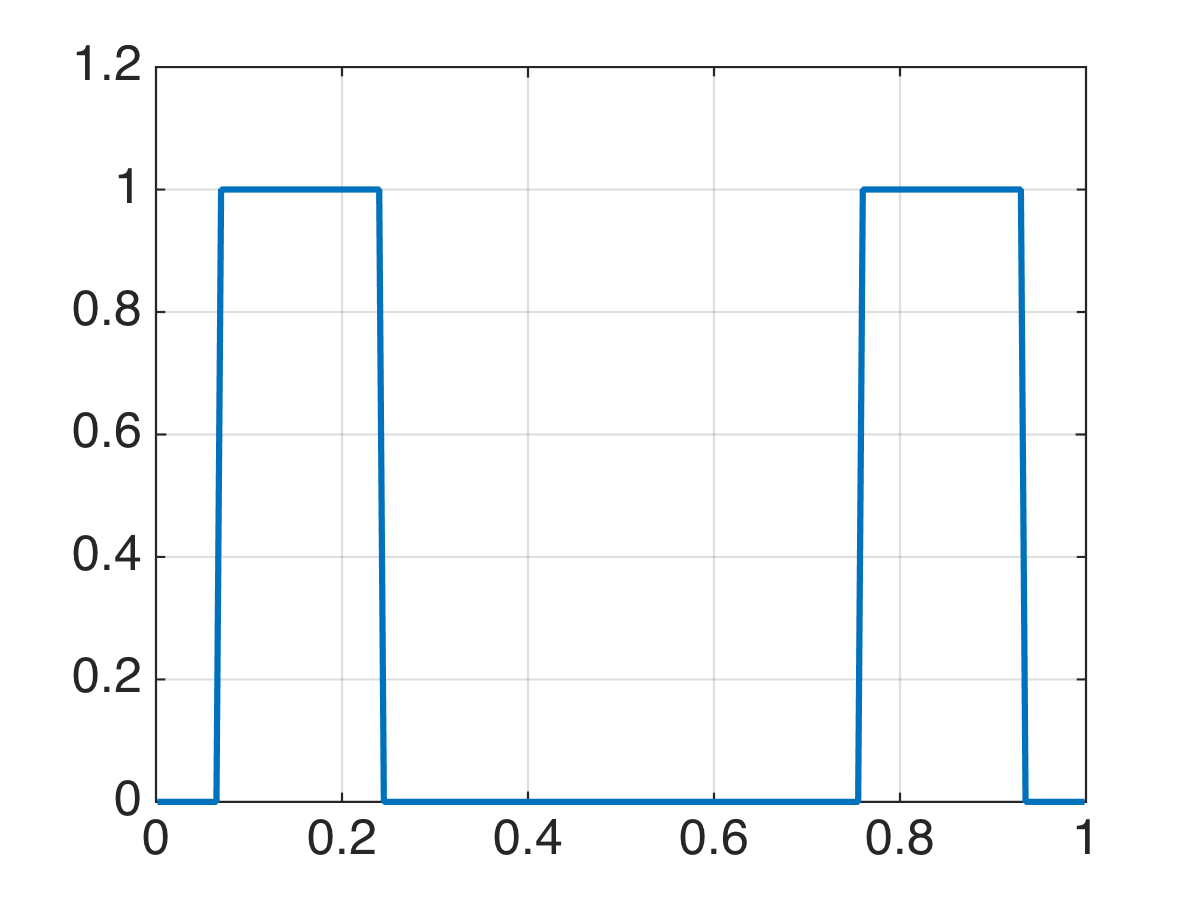}}
	\subfloat[$\alpha=10^3$]{\label{fig:t1d}\includegraphics[width=0.24\textwidth]{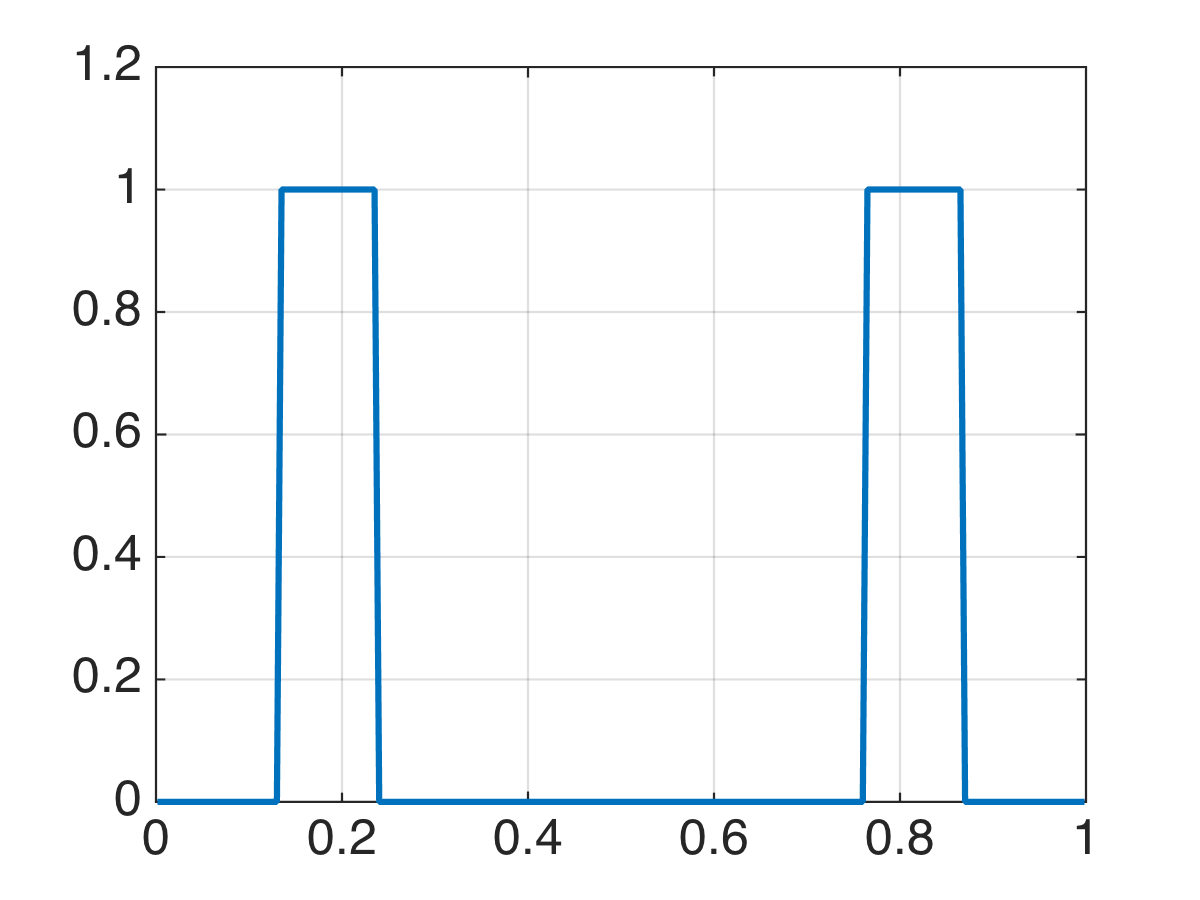}}
	\caption{Test 3. (a) Initial condition $y_0(x)=\max(\sin(3\pi x),0)^2$. (b) Optimal actuator for $\alpha=10^3$, without initialization via increasing penalization. (c) Optimal actuator for $\alpha=10^{-1}$, subsequently used in the quadratic penalty approach. (d) Optimal actuator for $\alpha=10^3$, via increasing penalization.}
	\label{fig:t11d}
\end{figure}
\begin{table}[!h]
	\centering
	\setlength{\tabcolsep}{1mm}
	\begin{tabular}{cp{1.7cm}p{1.9cm}p{2.8cm}p{1.7cm}}
		\hline\\
		$\alpha$& $\cJ_1$&   $\cJ_1^{LQ}$ & $\cJ_1^{\alpha}(size)$&iterations\\
		\hline\\
		0.1   &1.84$\times10^{-2}$ & 1.62$\times10^{-2}$ &  2.30$\times10^{-3}$ (0.35) &225 \\
		1      &2.35$\times10^{-2}$ & 2.26$\times10^{-2}$ & 9.10$\times10^{-4}$ (0.23)  &226\\
		10    &2.56$\times10^{-2}$ & 2.46$\times10^{-2}$ &  1.00$\times10^{-3}$ (0.21) &316\\
		$10^{2}$  &3.46$\times10^{-2}$ & 2.46$\times10^{-2}$ &  1.00$\times10^{-2}$ (0.21) &226\\
		$10^{3}$  &0.12       & 2.46$\times10^{-2}$ &  1.00$\times10^{-1}$ (0.21) &226\\
		$10^{3}$*  &8.18      & 8.00$\times10^{-2}$&  8.10 (0.29) &629\\
		\hline
	\end{tabular}
	\vskip 3mm
	\caption{Test 3. optimisation values for $y_0(x)=\max(\sin(3\pi x),0)^2$. Each row is initialized with the optimal actuator corresponding to the previous one, except for the last row with $\alpha=10^3*$, illustrating that incorrectly initialized solves lead to suboptimal solutions. The reference size for the actuator is $0.2$ .}\label{tabtest1}
\end{table}

\begin{figure}[!h]
	\centering
	\includegraphics[width=\textwidth]{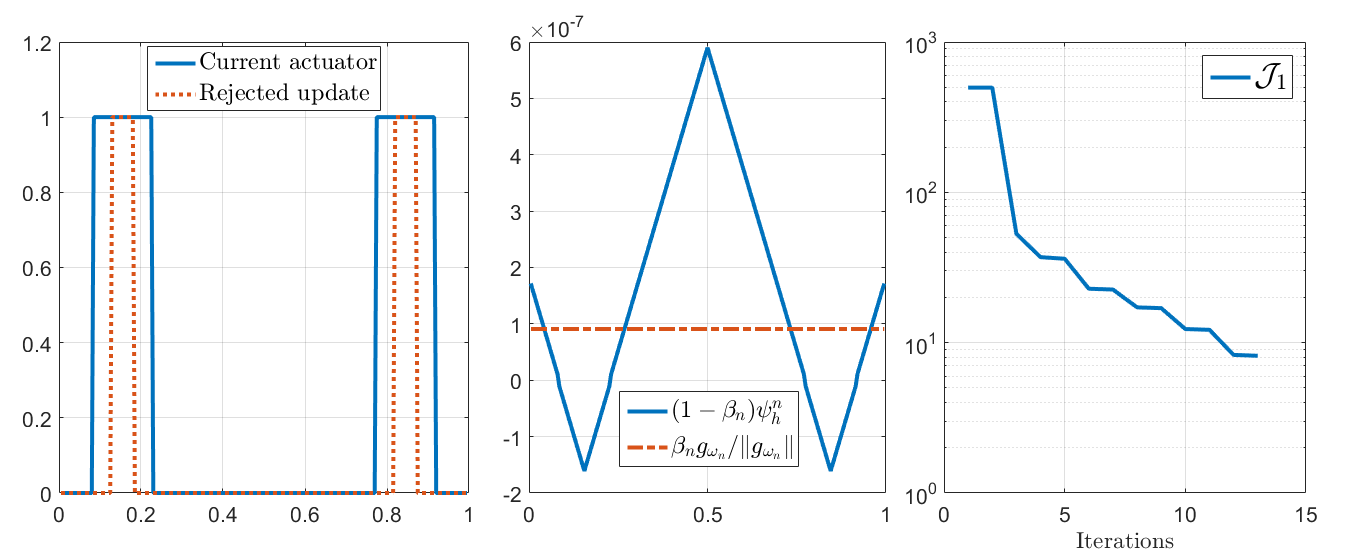}\\
	\caption{Test 3. Level set method implemented in Algorithm \ref{alg:topo}. Left: starting from an initial actuator, the topological derivative of the cost is computed and an updated actuator is obtained. The new shape is evaluated according to its closed-loop performance. If the update is rejected, the parameter $\beta_n$ is reduced. Middle: the level-set approach generates an update of the actuator shape based on the information from $\psi_h^{n}$, $\beta_n$ and $g_{\omega_n}$. Right: This iterative loop generates a decay in the total cost $J_1$, (which accounts for both the closed-loop performance of the actuator and its volume constraint).}
	\label{fig:t1it}
\end{figure}

\paragraph{Test 4} We repeat the setting of Test 3 with a nonsymmetric initial condition $y_0(x)=\sin(3\pi x)^2\chi_{\{x<2/3\}}(x)$. Results are presented in Table \ref{tabtest2} and Figure \ref{fig:t21d}, which illustrate the effectivity of the continuation approach, which generates an optimal actuator with two components of different size, see Figure \ref{t4twoc} and compare with Figure \ref{t4noinit}.
\begin{table}[!h]
	\centering
	\setlength{\tabcolsep}{1mm}
	\begin{tabular}{cp{1.7cm}p{1.9cm}p{2.8cm}p{1.7cm}}
		\hline\\
		$\alpha$& $\cJ_1$&   $\cJ_1^{LQ}$ & $\cJ_1^{\alpha}(size)$&iterations\\
		\hline\\
		0.1   &6.48$\times10^{-2}$ & 6.31$\times10^{-2}$ &  1.7$\times10^{-3}$ (0.33) &229 \\
		1      &8.0$\times10^{-2}$ & 6.31$\times10^{-2}$ & 1.69-2 (0.33)  &226\\
		10    &0.176 & 0.164 &  1.23$\times10^{-2}$ (0.235) &226\\
		$10^{2}$  &0.207 & 0.184 &  2.25$\times10^{-2}$ (0.215) &316\\
		$10^{3}$  &0.234      & 0.209 &  2.50$\times10^{-2}$ (0.195) &316\\
		$10^{4}$  &0.459       & 0.209 &  0.250 (0.195) &316\\
		$10^{4}$*  &9.09       & 9.66$\times10^{-2}$ &  9 (0.23) &629\\
		\hline
	\end{tabular}
	\vskip 3mm
	\caption{Test 4. optimisation values for $y_0(x)=\sin(3\pi x)^2\chi_{x<2/3}(x)$. Each row is initialized with the optimal actuator corresponding to the previous one, except for the last row with $\alpha=10^4*$, illustrating that incorrectly initialized solves lead to suboptimal solutions. The reference size for the actuator is $0.2$ .}\label{tabtest2}
\end{table}

\begin{figure}[!h]
	\centering
	\subfloat[$y_0(x)$]{\includegraphics[width=0.24\textwidth]{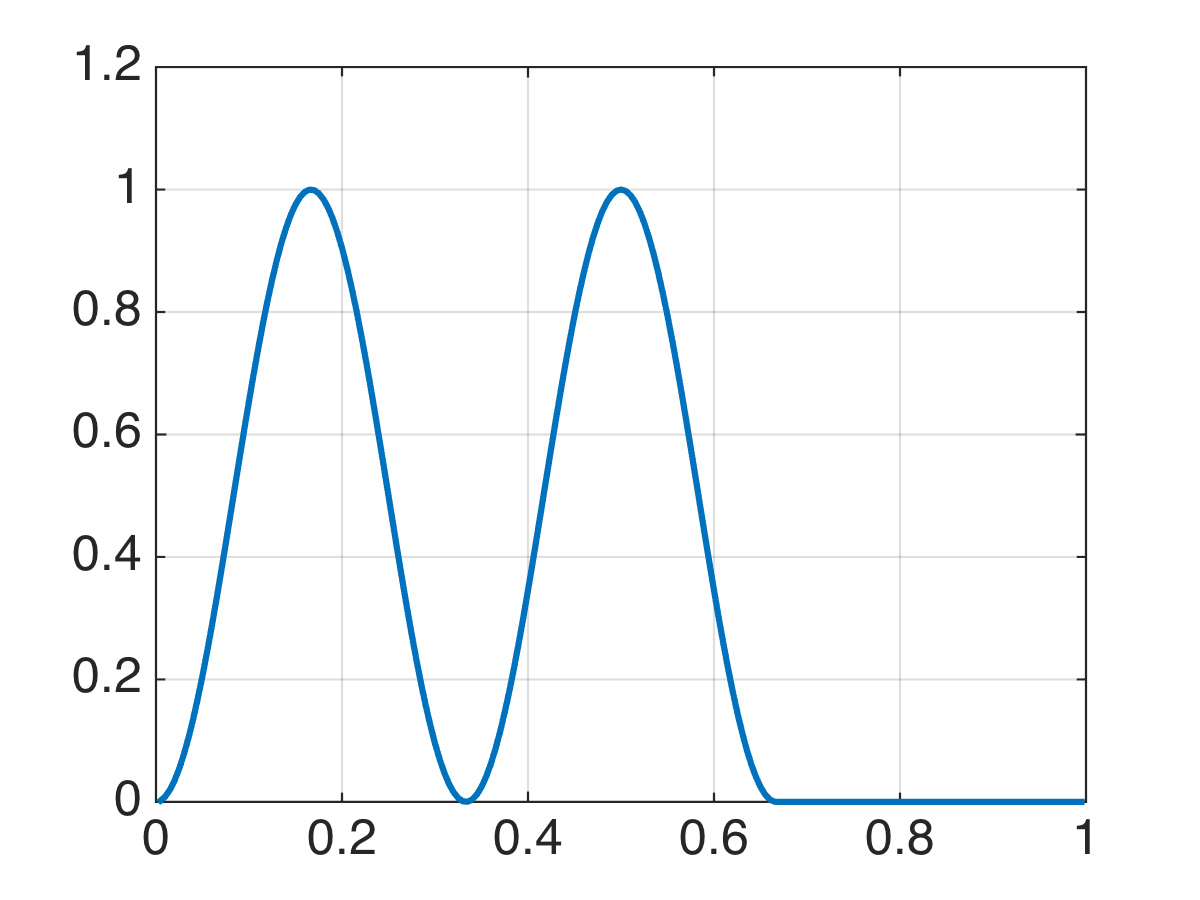}}
	\subfloat[$\alpha=10^4$, no init.]{\label{t4noinit}\includegraphics[width=0.24\textwidth]{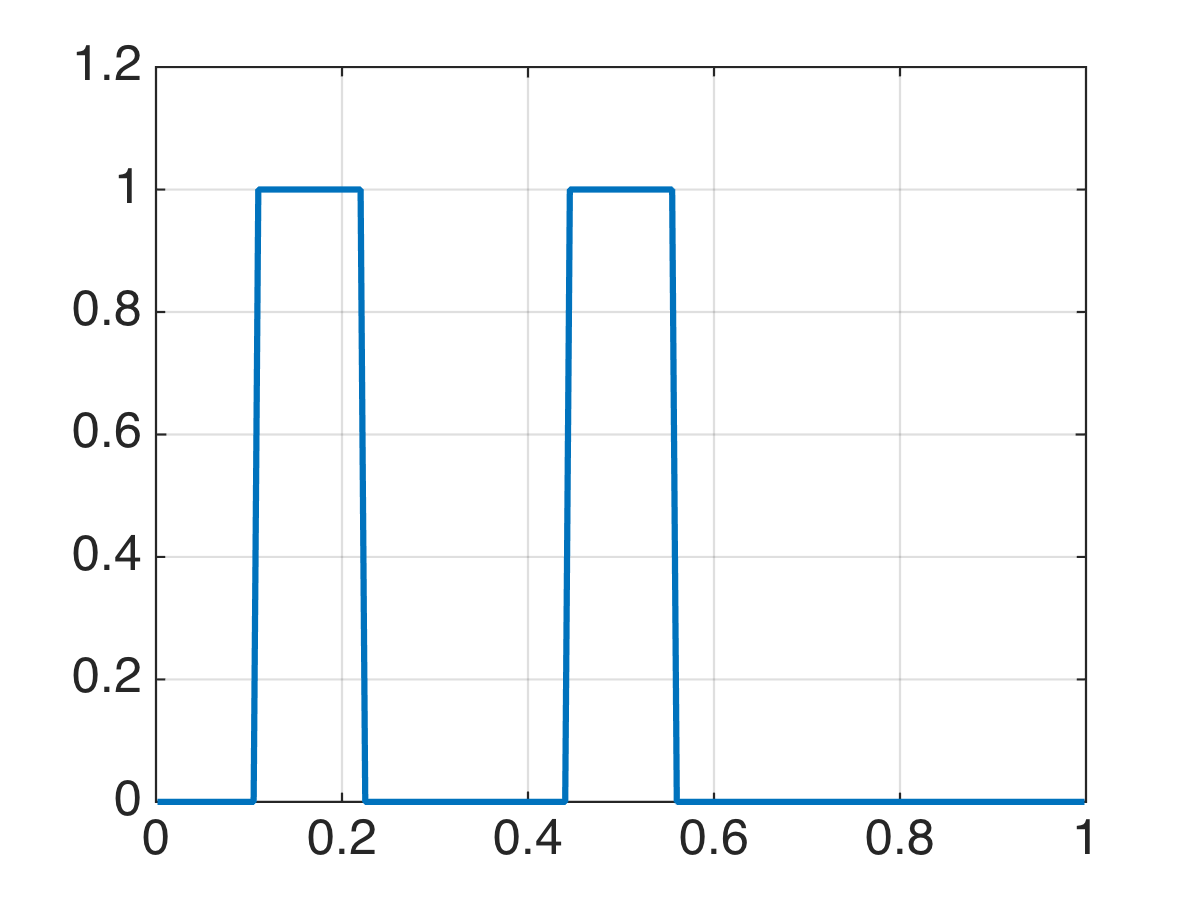}}
	\subfloat[$\alpha=10^{-1}$]{\includegraphics[width=0.24\textwidth]{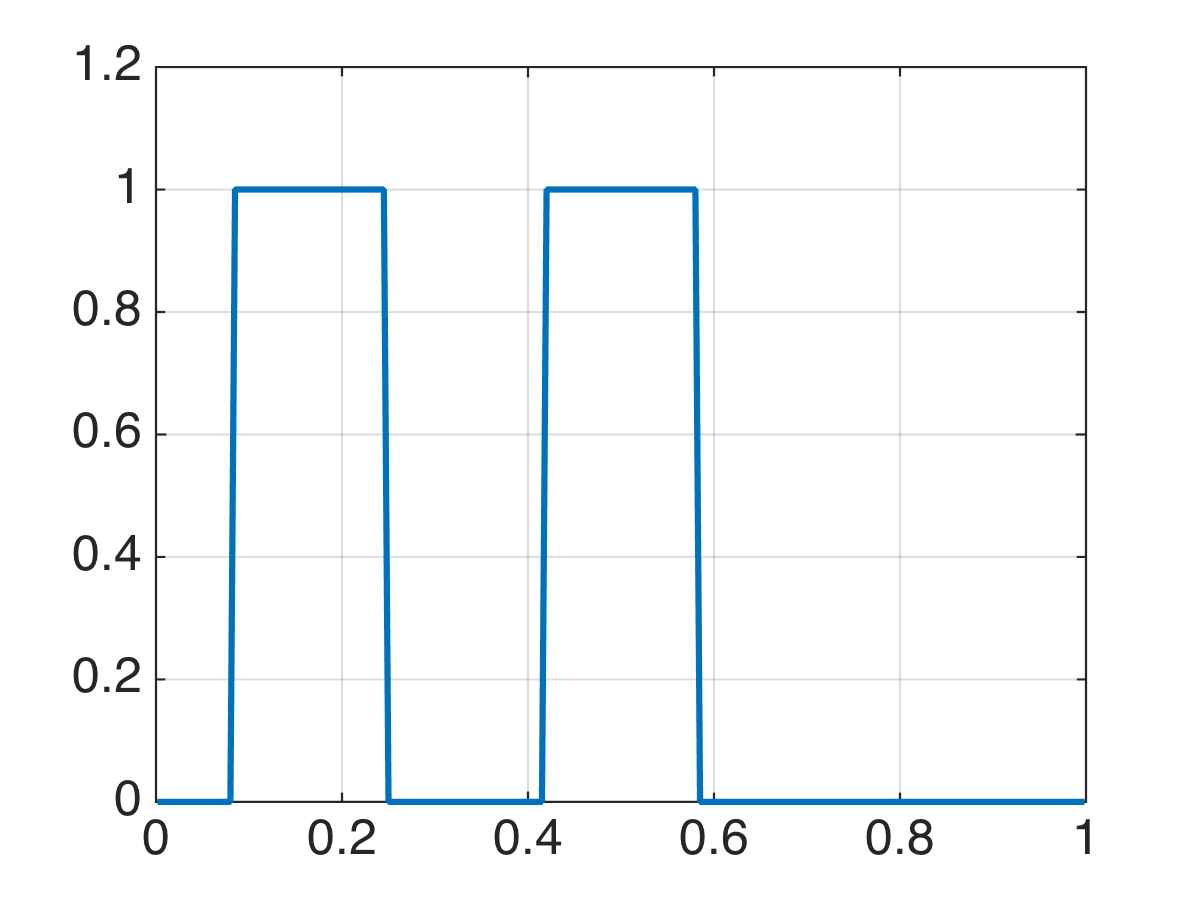}}
	\subfloat[$\alpha=10^4$]{\label{t4twoc}\includegraphics[width=0.24\textwidth]{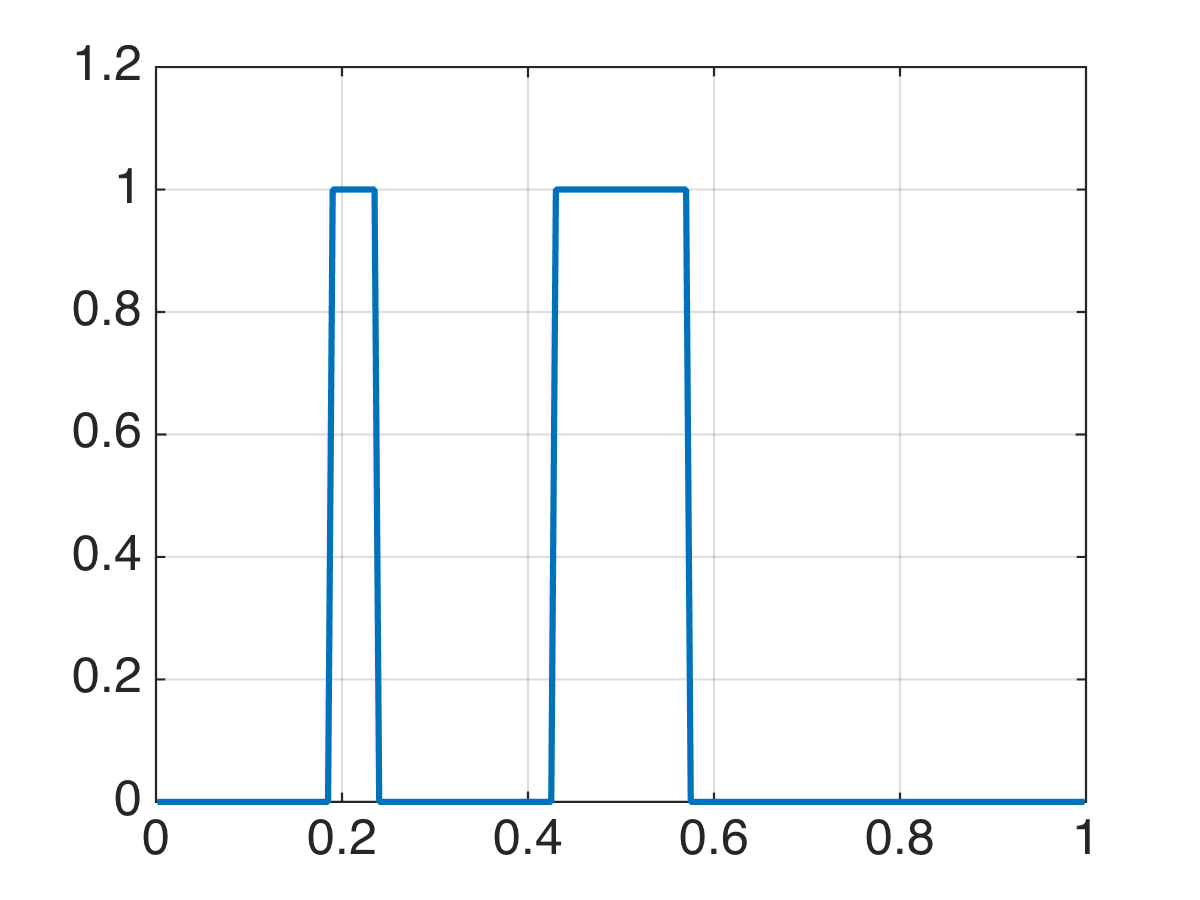}}	
	\caption{Test 4. (a) Initial condition $y_0(x)=\sin(3\pi x)^2\chi_{\{x<2/3\}}(x)$. (b) Optimal actuator for $\alpha=10^4$, without initialization via increasing penalization. (c) Optimal actuator for $\alpha=10^{-1}$, subsequently used in the quadratic penalty approach. (d) Optimal actuator for $\alpha=10^4$, via increasing penalization.}
	\label{fig:t21d}
\end{figure}

\paragraph{Test 5} We now turn our attention to the optimal actuator design for the worst-case scenario among all the initial conditions, i.e. the $\cJ_2$ setting. Results are presented in Figure \ref{fig:t31d} and Table \ref{tabtest3}. The worst-case scenario corresponds to the first eigenmode of the Riccati operator (Figure \ref{eigentest5}), which generates a two-component symmetric actuator (Figure \ref{twoct5}). This is only observed within the continuation approach. For a large value of $\alpha$ without initialisation, we obtain a suboptimal solution with a single component (last row of Table \ref{tabtest3}, Figure \ref{t5noinit}).

\begin{figure}[!h]
	\centering
	\subfloat[$\mathfrak X_2(\omega)$]{\label{eigentest5}\includegraphics[width=0.24\textwidth]{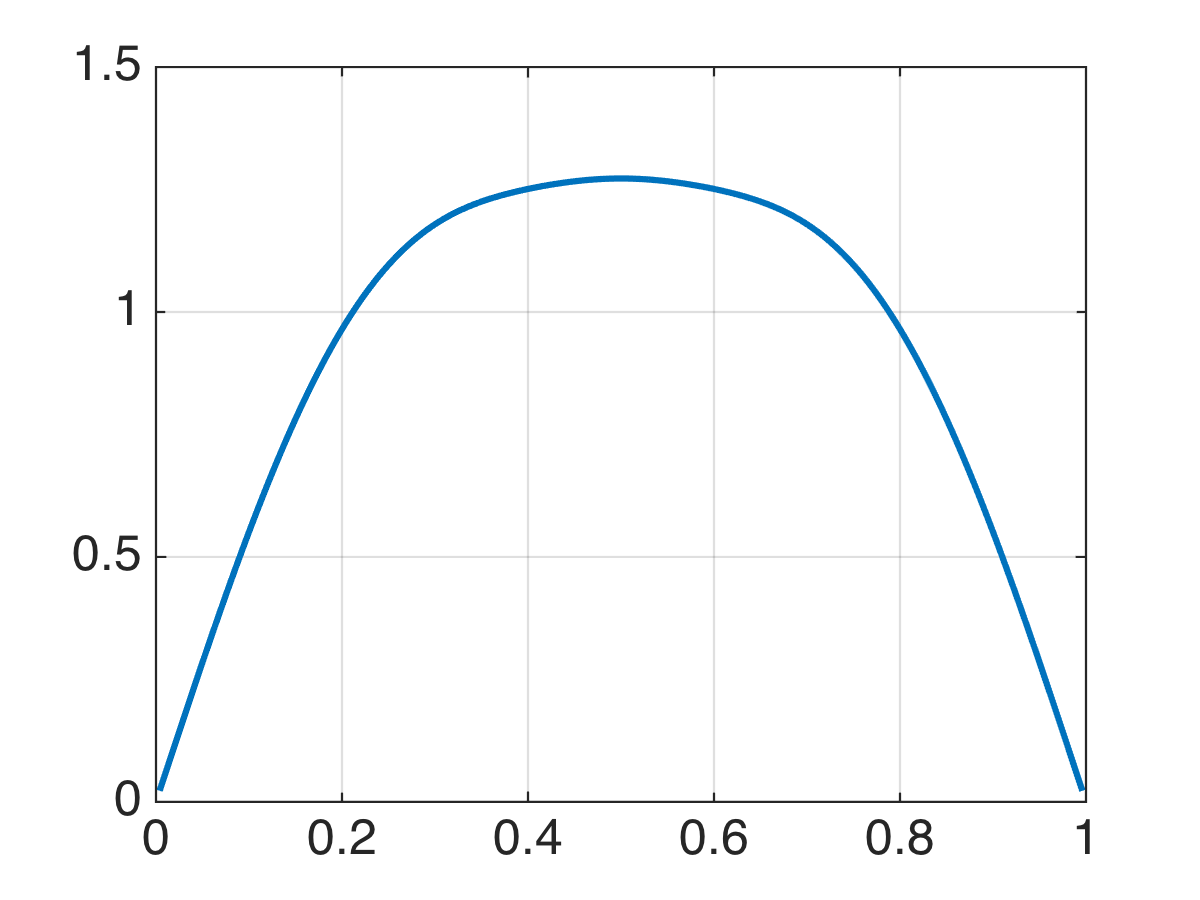}}
	\subfloat[$\alpha=10^3$, no init.]{\label{t5noinit}\includegraphics[width=0.24\textwidth]{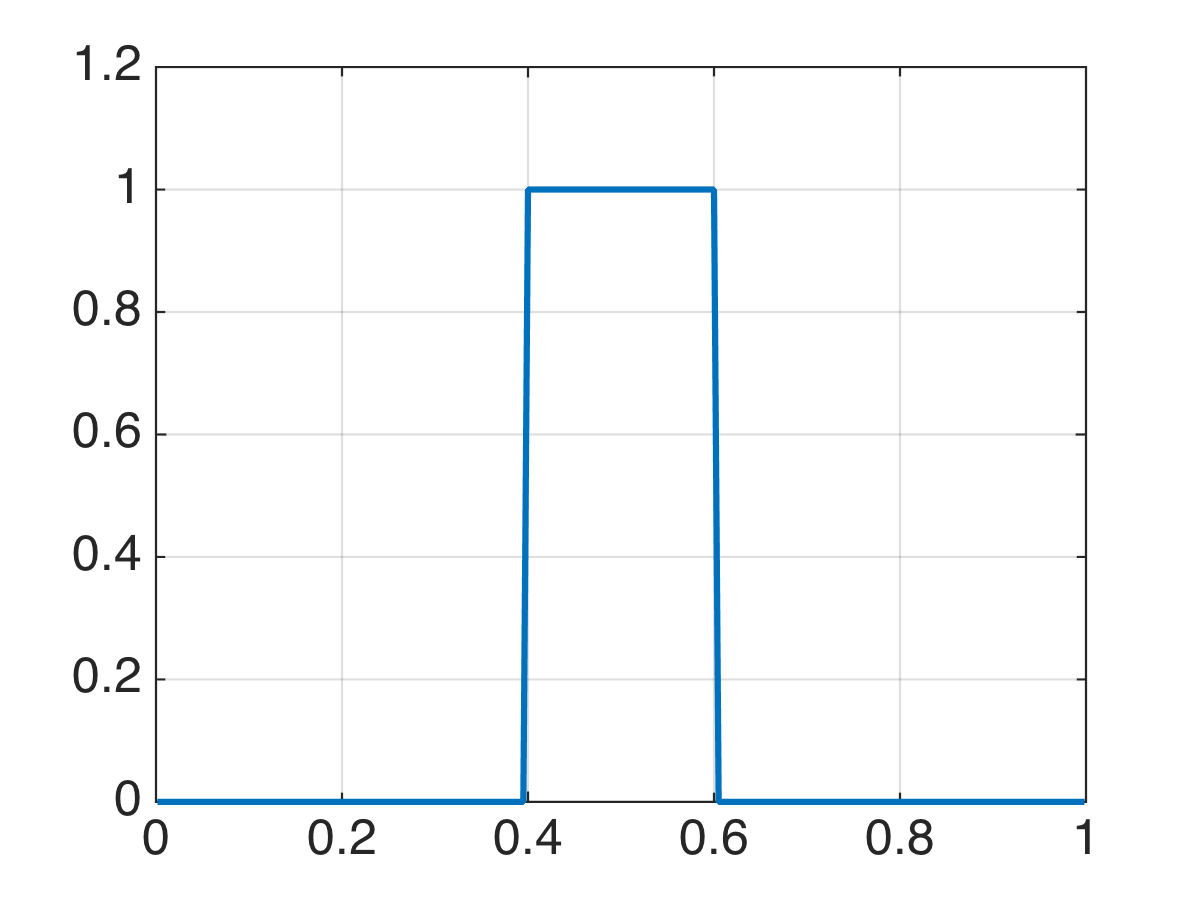}}
	\subfloat[$\alpha=10^{-1}$]{\includegraphics[width=0.24\textwidth]{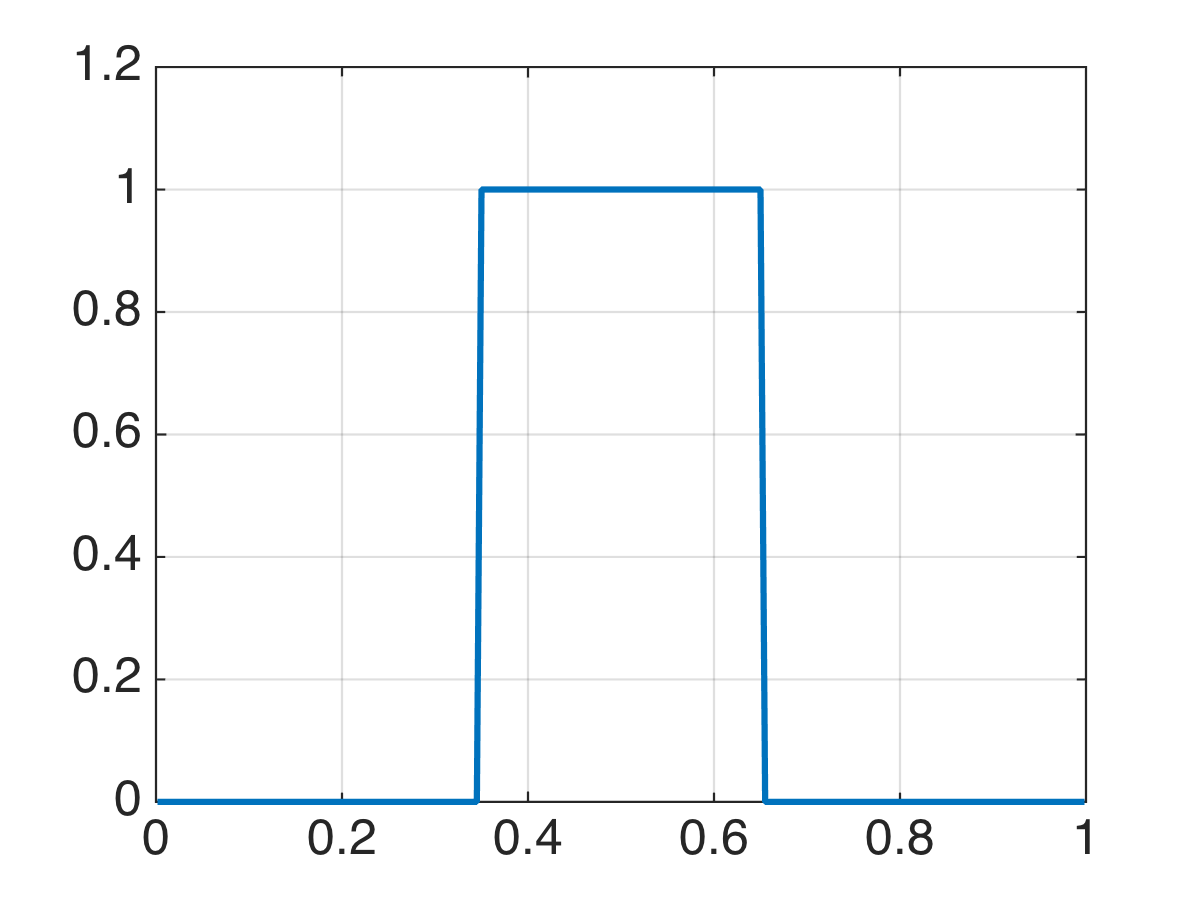}}
	\subfloat[$\alpha=10^{3}$]{\label{twoct5}\includegraphics[width=0.24\textwidth]{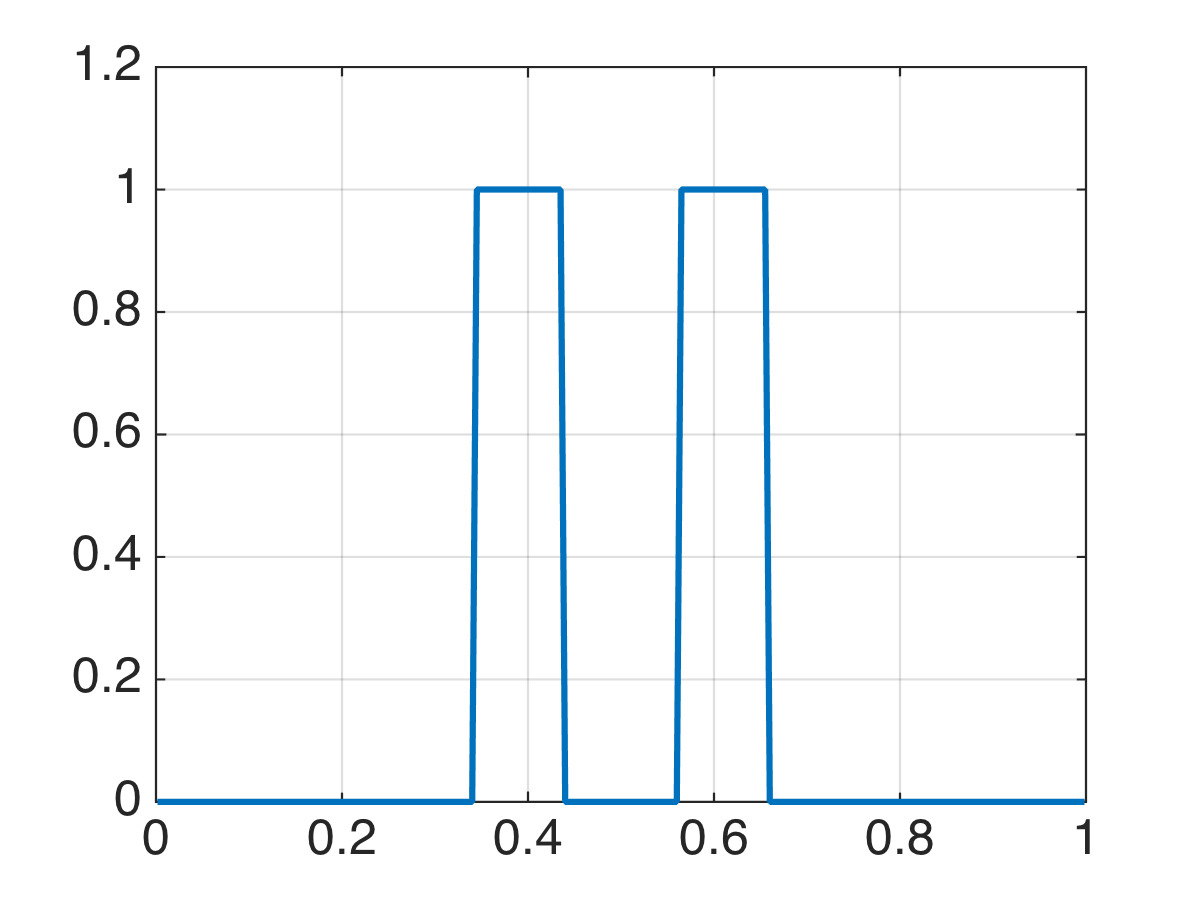}	}
	\caption{Test 5. (a) First eigenmode of the Riccati operator, which corresponds to the set $\mathfrak X_2(\omega)$. (b) Optimal actuator for $\alpha=10^3$, without initialization via increasing penalization. (c) Optimal actuator for $\alpha=10^{-1}$, subsequently used in the quadratic penalty approach. (d) Optimal actuator for $\alpha=10^3$, via increasing penalization.}
	\label{fig:t31d}
\end{figure}

\begin{table}[!h]
	\centering
	\setlength{\tabcolsep}{1mm}
	\begin{tabular}{cp{1.7cm}p{1.9cm}p{2.8cm}p{1.7cm}}
		\hline\\
		$\alpha$& $\cJ_2$&   $\cJ_2^{LQ}$ & $\cJ_2^{\alpha}(size)$&iterations\\
		\hline\\
		0.1   &0.402 & 0.401 &  1.1$\times10^{-3}$ (0.305) &307 \\
		1      &0.369 & 0.364 & 4.0$\times10^{-4}$ (0.22)  &225\\
		10    &0.343 & 0.342 &  1.0$\times10^{-3}$ (0.19) &228\\
		$10^{2}$  &0.352 & 0.342 &  1.0$\times10^{-2}$ (0.19) &226\\
		$10^{3}$  &0.442      & 0.342 &  0.1 (0.19) &226\\
		$10^{3}$*  &0.761       & 0.536 &  0.225 (0.215) &941\\
		\hline
	\end{tabular}
	\vskip 3mm
	\caption{Test 5. optimisation values for $\cJ_2$. Each row is initialized with the optimal actuator corresponding to the previous one, except for the last row with $\alpha=10^{3}$*. The reference size for the actuator is $0.2$ .}\label{tabtest3}
\end{table}

\paragraph{Test 6} As an extension of the capabilities of the proposed approach, we explore the $\cJ_2$ setting with space-dependent diffusion. For this test, the diffusion operator $\sigma\Delta y$ is rewritten as $div(\sigma(x)\nabla y)$, with $\sigma(x)=(1-\max(\sin(9\pi x),0))\chi_{\{x<0.5\}}(x)+10^{-3} $. Iterates of the continuation approach are presented in Table \ref{tabtest4}. Again, the lack of a proper initialization of Algortithm \ref{alg:topo} with a large value of $\alpha$ leads to a poor satisfaction of both the size constraint and the LQ performance, which is solved via the increasing penalty approach. A two-component actuator present in the area of smaller diffusion is observed in Figure \ref{twocompt6}.

\begin{table}[!h]
	\centering
	\setlength{\tabcolsep}{1mm}
	\begin{tabular}{cp{1.7cm}p{1.9cm}p{2.8cm}p{1.7cm}}
		\hline\\
		$\alpha$& $\cJ_2$&   $\cJ_2^{LQ}$ & $\cJ_2^{\alpha}(size)$&iterations\\
		\hline\\
		0.1   &1.792 & 1.743 &  4.97$\times10^{-2}$ (0.908) &194 \\
		1      &2.240 & 1.743 & 0.497 (0.908)  &228\\
		10    &4.734 & 4.462 &  0.272 (0.365) &225\\
		$10^{2}$  &3.134 & 3.071 &  6.25$\times10^{-2}$ (0.175) &538\\
		$10^{3}$  &1.023  & 0.998 &  0.025 (0.195) &226\\
		$10^{4}$  &1.248  & 0.998 &  0.250 (0.195) &226\\
		$10^{4}$*  &28.19   & 3.195 &  25.0 (0.25) &673\\
		\hline
	\end{tabular}
	\vskip 3mm
	\caption{Test 6. $\cJ_2$ values with space-dependent diffusion $\sigma(x)=(1-\max(\sin(9\pi x),0))\chi_{\{x<0.5\}}(x)+10^{-3} $. Each row is initialized with the optimal actuator corresponding to the previous one, except for the last row with $\alpha=10^{4}$*. The reference size for the actuator is $0.2$ .}\label{tabtest4}
\end{table}

\begin{figure}[!h]
	\centering
	\subfloat[$\mathfrak X_2(\omega)$]{\label{eigentest6}\includegraphics[width=0.24\textwidth]{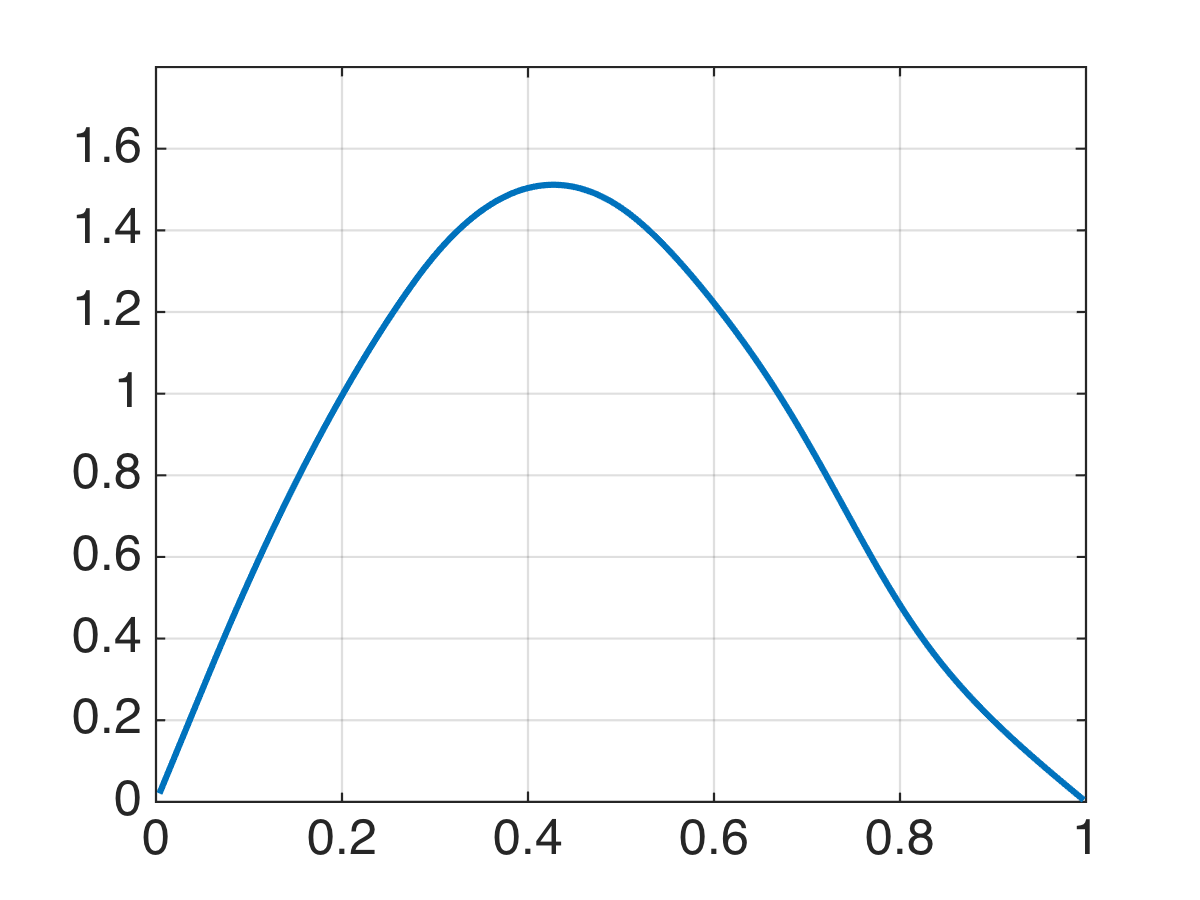}}
	\subfloat[$\sigma(x)$]{\label{difut6}\includegraphics[width=0.24\textwidth]{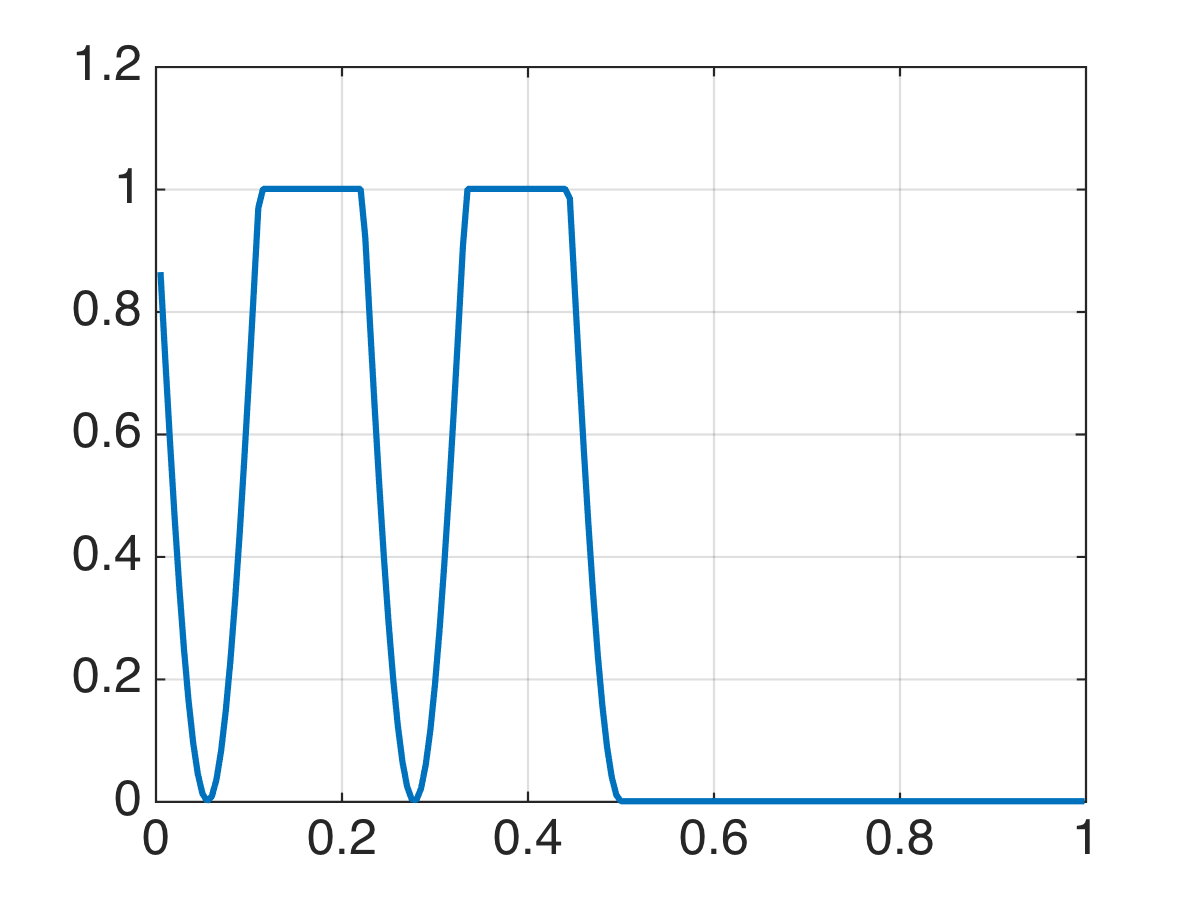}}
	\subfloat[$\alpha=0.1$]{\includegraphics[width=0.24\textwidth]{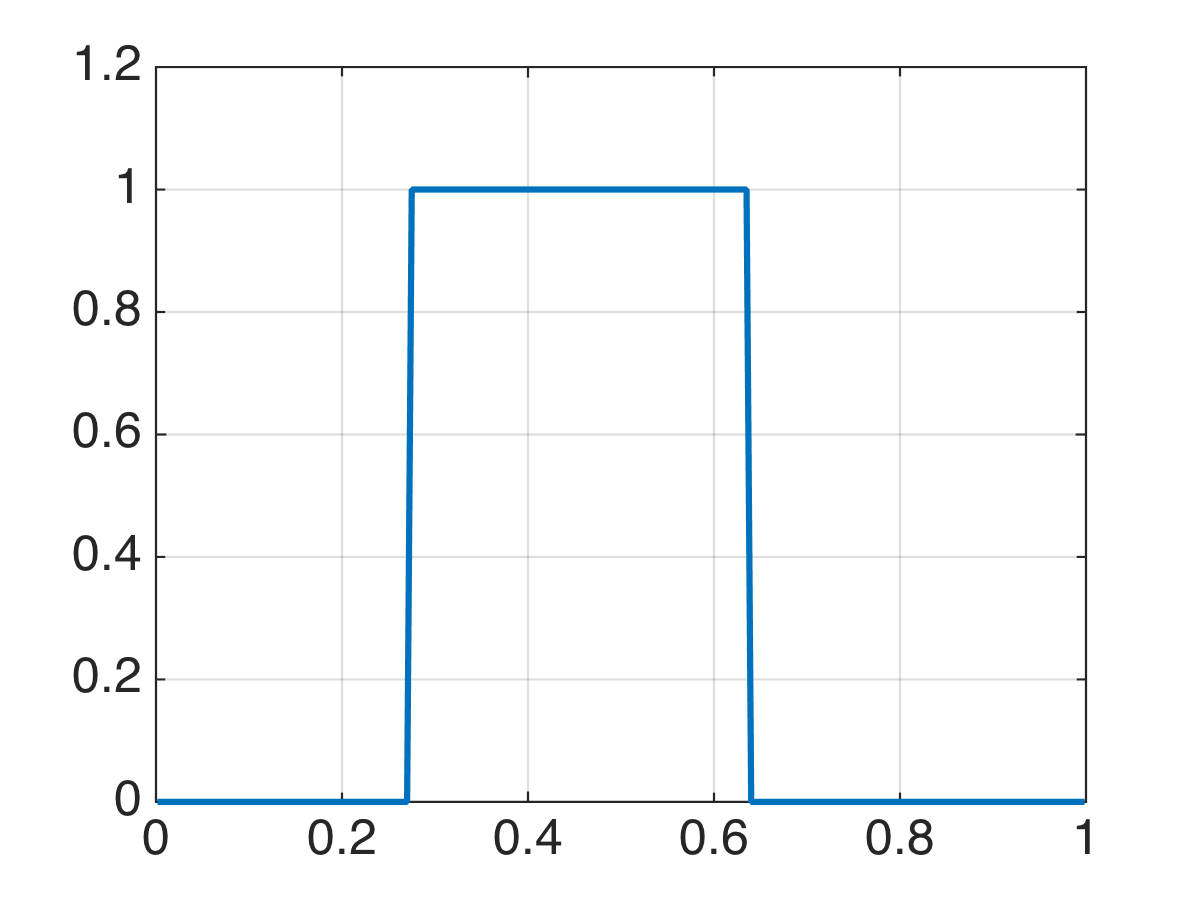}}
	\subfloat[$\alpha=10^4$]{\label{twocompt6}\includegraphics[width=0.24\textwidth]{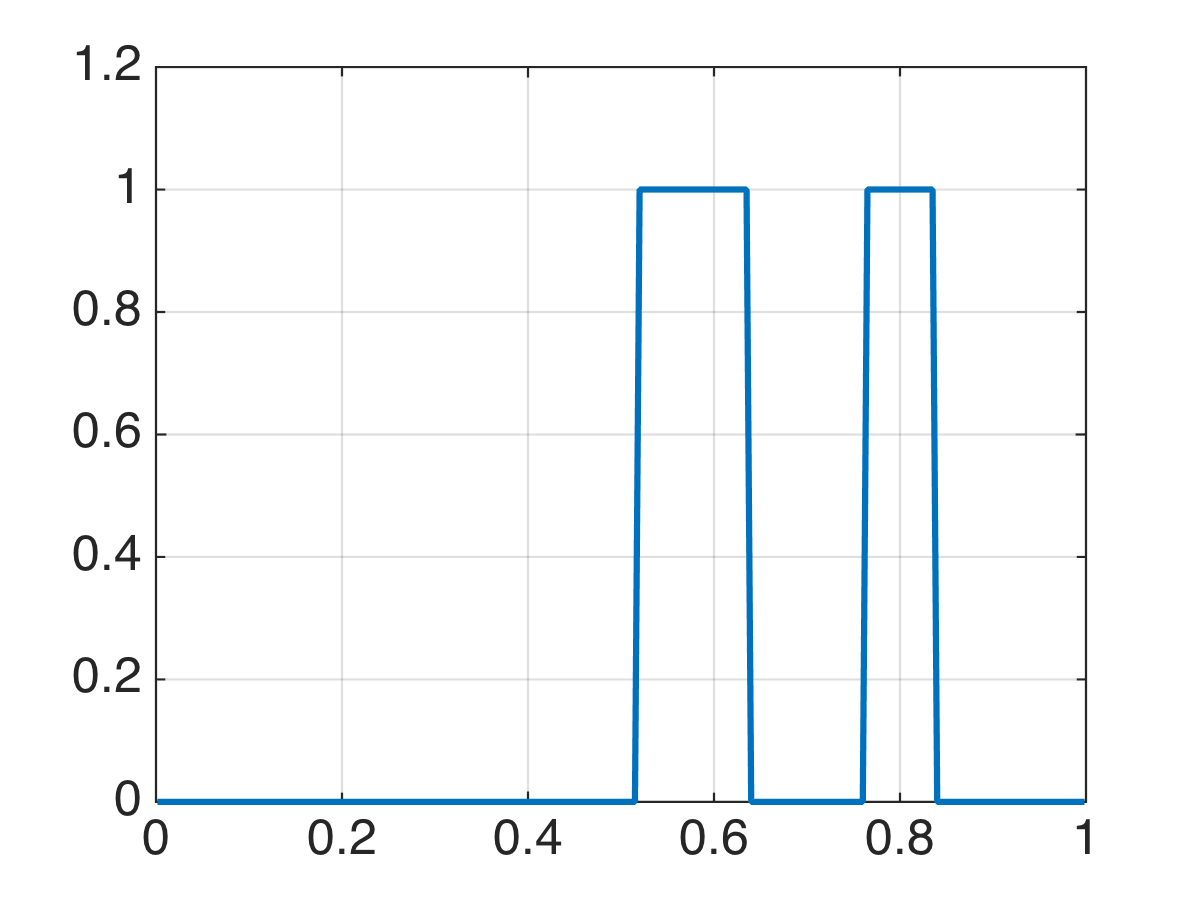}}	
	\caption{Test 6. (a) First eigenmode of the Riccati operator, which corresponds to the set $\mathfrak X_2(\omega)$. (b) space-dependent diffusion coefficient $\sigma(x)=(1-\max(\sin(9\pi x),0))\chi_{\{x<0.5\}}(x)+10^{-3} $. (c) Optimal actuator for $\alpha=10$, subsequently used in the quadratic penalty approach. (d) Optimal actuator for $\alpha=10^4$, via increasing penalization.}
	\label{fig:t41d}
\end{figure}

\subsection{Two-dimensional optimal actuator design} We now turn our attention into assessing the performance of Algorithm \ref{alg:topo} for two-dimensional actuator topology optimisation. While this problem is computationally demanding, the increase of degrees of freedom can be efficiently handled via modal expansions, as explained at the beginning of this Section. We explore both the $\cJ_1$ and $\cJ_2$ settings.
\paragraph{Test 7} This experiment is a direct extension of Test 3. We consider a unilaterally symmetric initial condition $y_0(x_1,x_2)=\max(\sin(4\pi(x_1-1/8)),0)^3\sin(\pi x_2)^3$, inducing a two-component actuator. The desired actuator size is $c=0.04$. The evolution of the actuator design for increasing values of the penalty parameter $\alpha$ is depicted in Figure \ref{fig:testj1}. We also study the closed-loop performance of the optimal shape.  For this purpose
the running cost associated to the optimal actuator is compared against an ad-hoc design, which consists of a cylindrical actuator of desired size placed in the center of the domain, see Figure \ref{fig:perfj1} . The closed-loop dynamics of the optimal actuator generate a stronger exponential decay compared to the uncontrolled dynamics and the ad-hoc shape.

\begin{figure}[!h]
	\centering
	\subfloat[$y_0(x_1,x_2)$]{\label{fig:a}\includegraphics[width=0.5\textwidth]{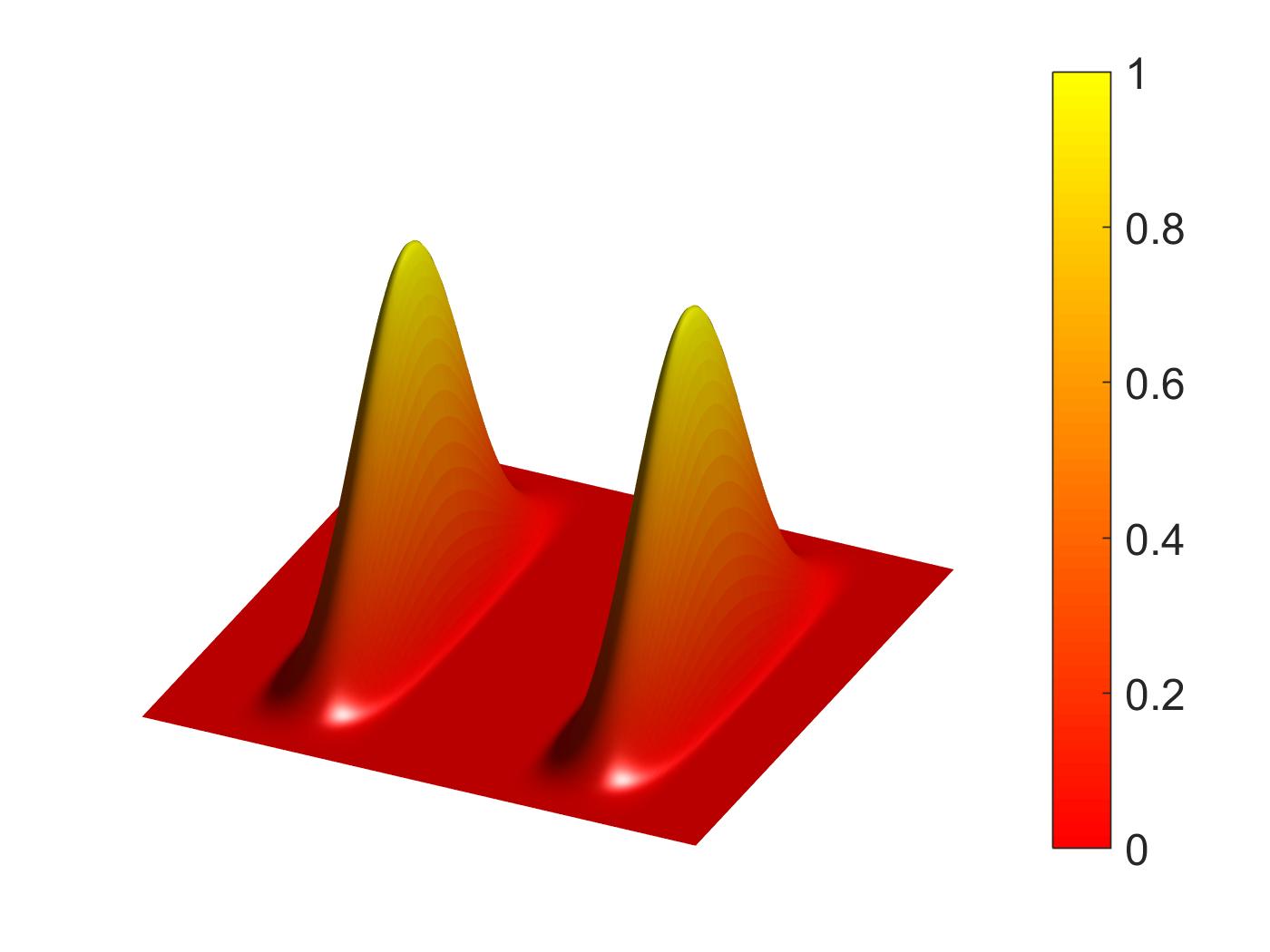}}
	\subfloat[$\psi_h^{n+1}$]{\includegraphics[width=0.5\textwidth]{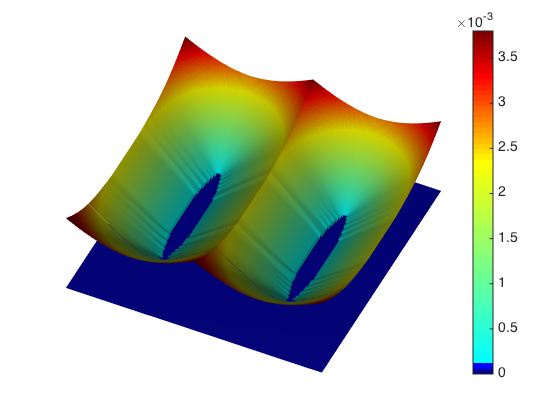}}\\
	\subfloat[$\alpha=0.1$]{\label{fig:b}\includegraphics[width=0.5\textwidth]{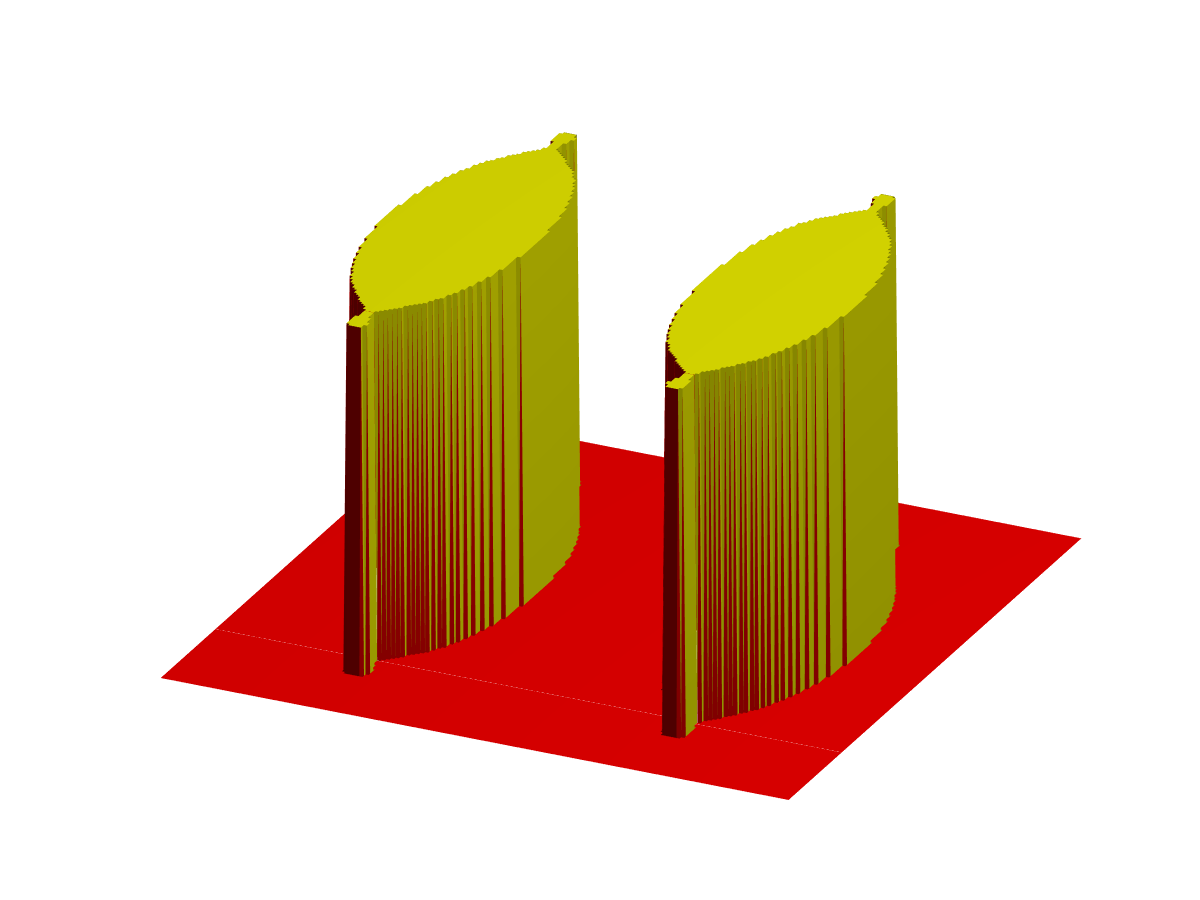}}
	\subfloat[$\alpha=1$]{\label{fig:c}\includegraphics[width=0.5\textwidth]{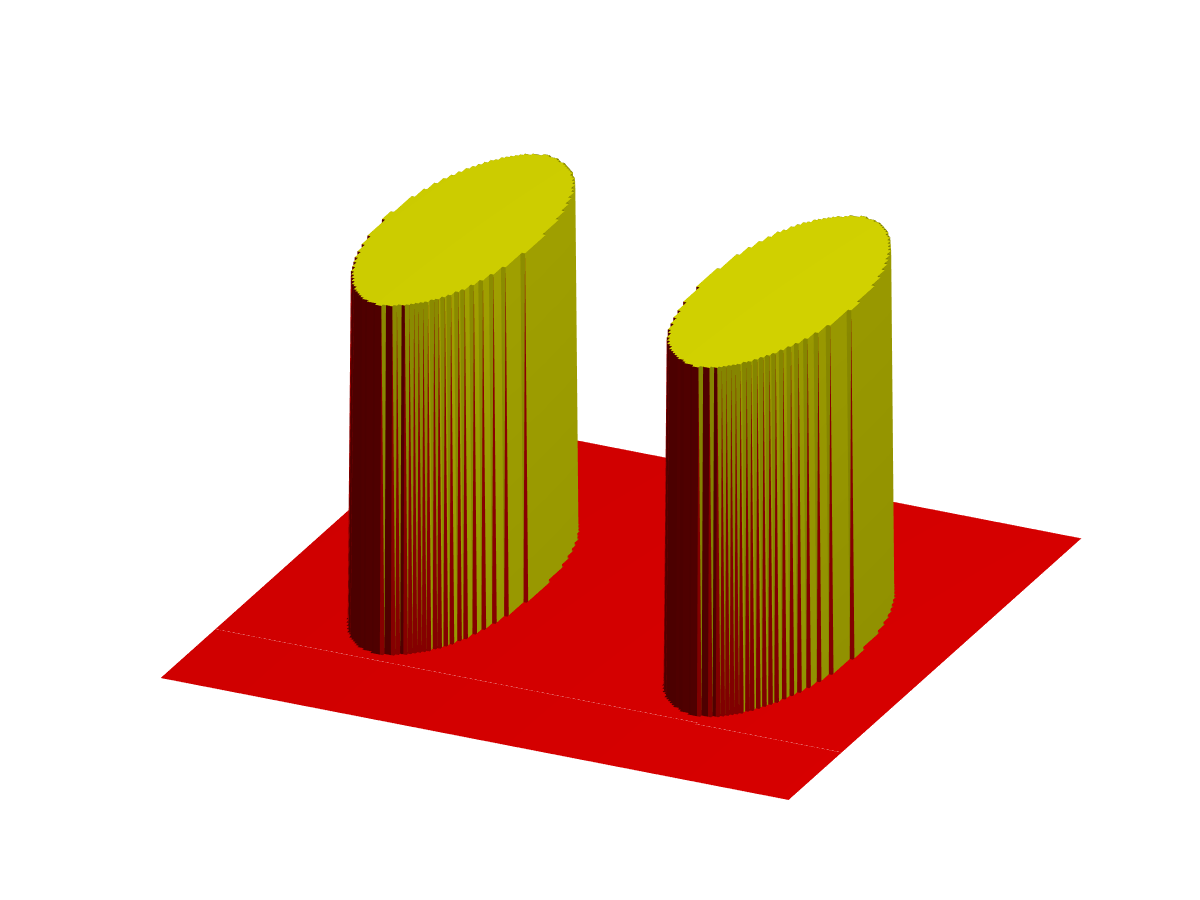}}\\
	\subfloat[$\alpha=1\times10^{2}$]{\label{fig:e}\includegraphics[width=0.5\textwidth]{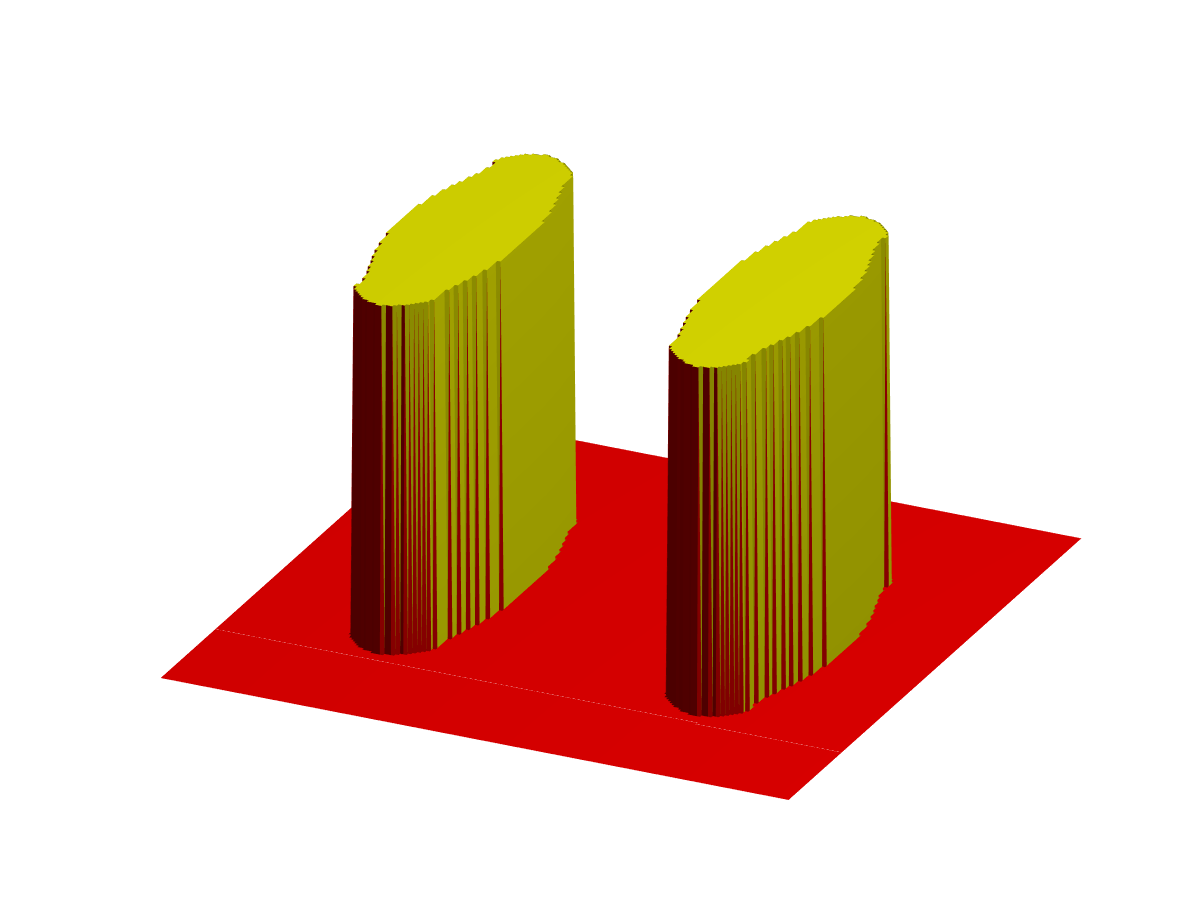}}
	\subfloat[$\alpha=10^{4}$]{\label{fig:g}\includegraphics[width=0.5\textwidth]{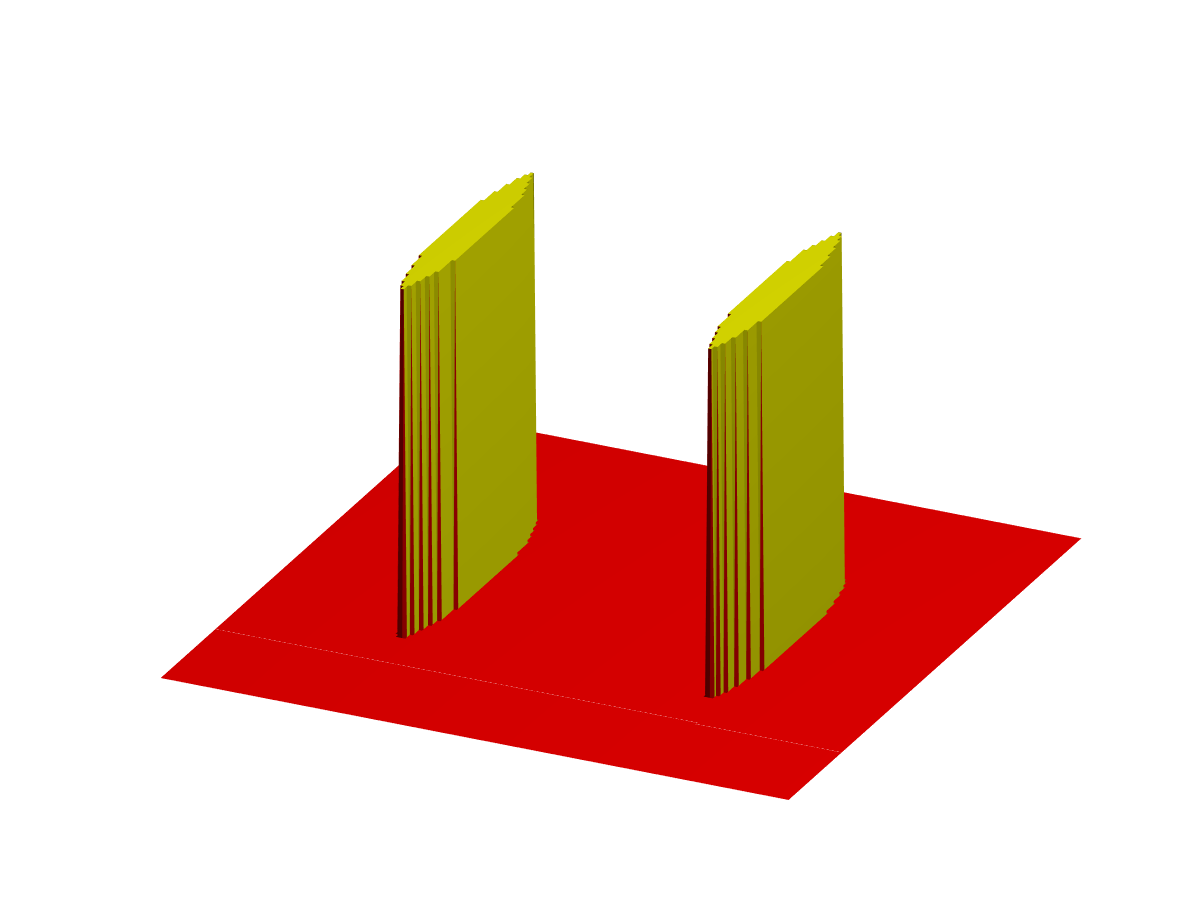}}	
	\caption{Test 7. (a) initial condition $y_0(x_1,x_2)=\max(\sin(4\pi(x_1-1/8)),0)^3\sin(\pi x_2)^3$ for $\cJ_1$ optimisation. (b) within the level-set method, the actuator is updated according to the zero level-set of the function $\psi_h^{n+1}$. (c) to (f) optimal actuators for different volume penalties.}
	\label{fig:testj1}
\end{figure}

\begin{figure}[!h]
	\centering
	\includegraphics[width=0.6\textwidth]{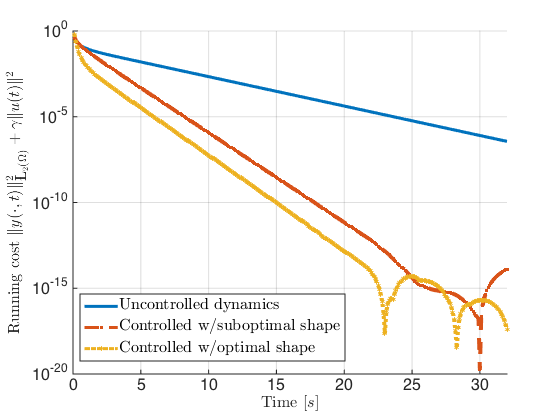}\\
	\caption{Test 7. Closed-loop performance for different shapes. The running cost in $\cJ_1$ is evaluated for uncontrolled dynamics ($u\equiv 0$), an ad-ho cylindrical actuator located in the center of the domain, and the optimal shape (Figure \ref{fig:g}). Closed-loop dynamics of the optimal shape decay faster. }
	\label{fig:perfj1}
\end{figure}

\paragraph{Test 8} In an analogous way as in Test 5, we study the optimal design problem associated to $\cJ_2$. The first eigenmode of the Riccati operator is shown in Figure \ref{fig:aj2}. The increasing penalty approach (Figs. \ref{fig:bj2} to \ref{fig:gj2}) shows a complex structure, with a hollow cylinder and four external components. The performance of the closed-loop optimal solution is analysed in Figure \ref{fig:perfj2}, with a considerably faster decay compared to the uncontrolled solution, and to the ad-hoc design utilised in the previous test.
\begin{figure}[!h]
	\centering
	\subfloat[$\mathfrak X_2(\omega)$]{\label{fig:aj2}\includegraphics[width=0.5\textwidth]{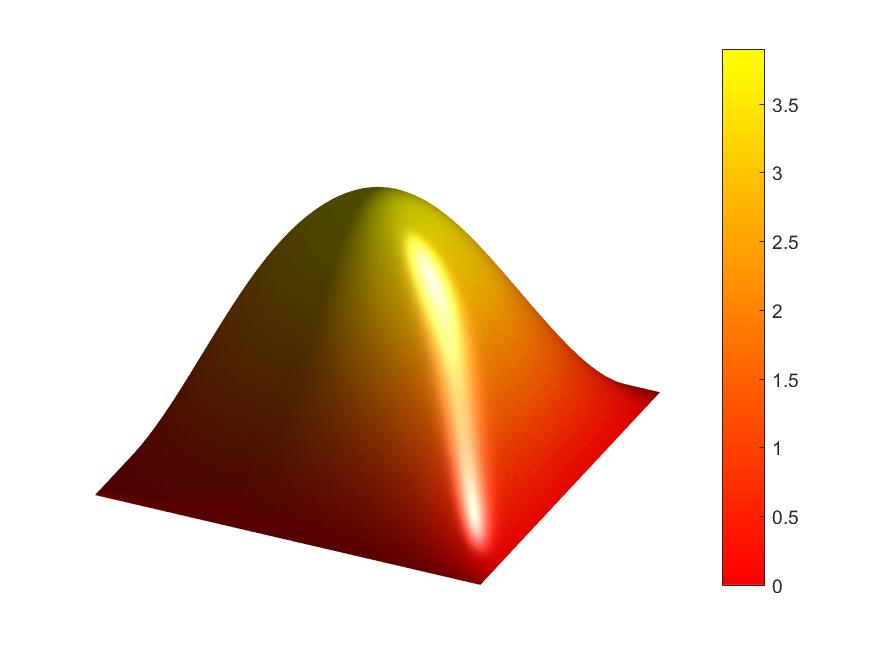}}
	\subfloat[$\psi_h^{n+1}$]{\includegraphics[width=0.5\textwidth]{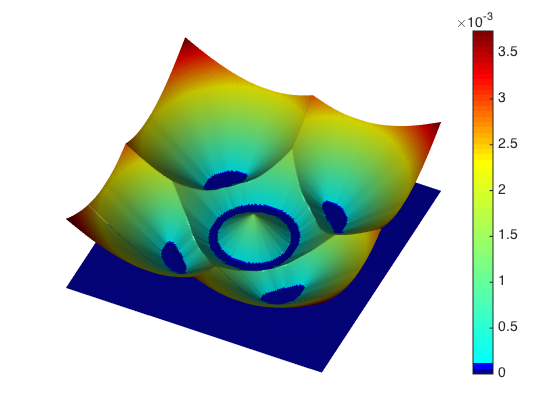}}\\
	\subfloat[$\alpha=0.1$]{\label{fig:bj2}\includegraphics[width=0.5\textwidth]{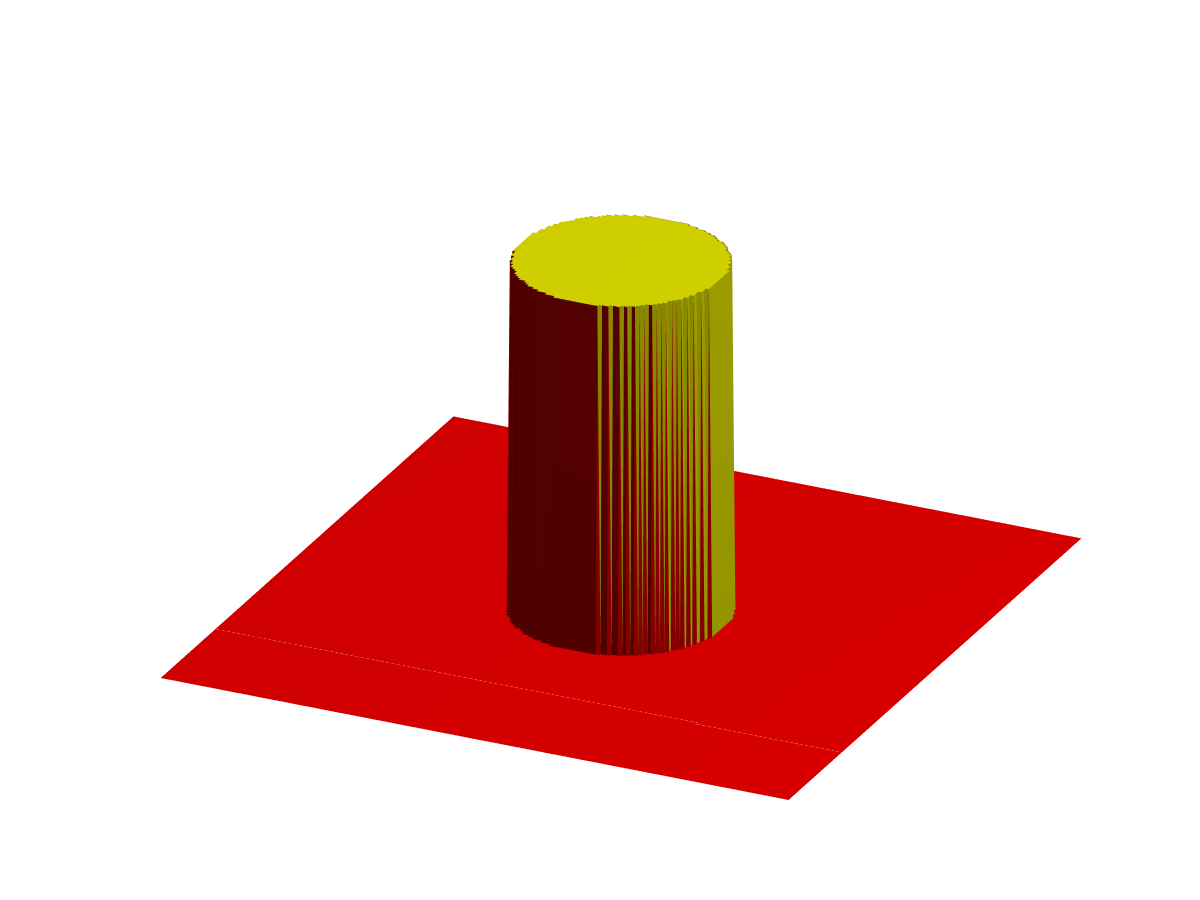}}
	\subfloat[$\alpha=10$]{\label{fig:dj2}\includegraphics[width=0.5\textwidth]{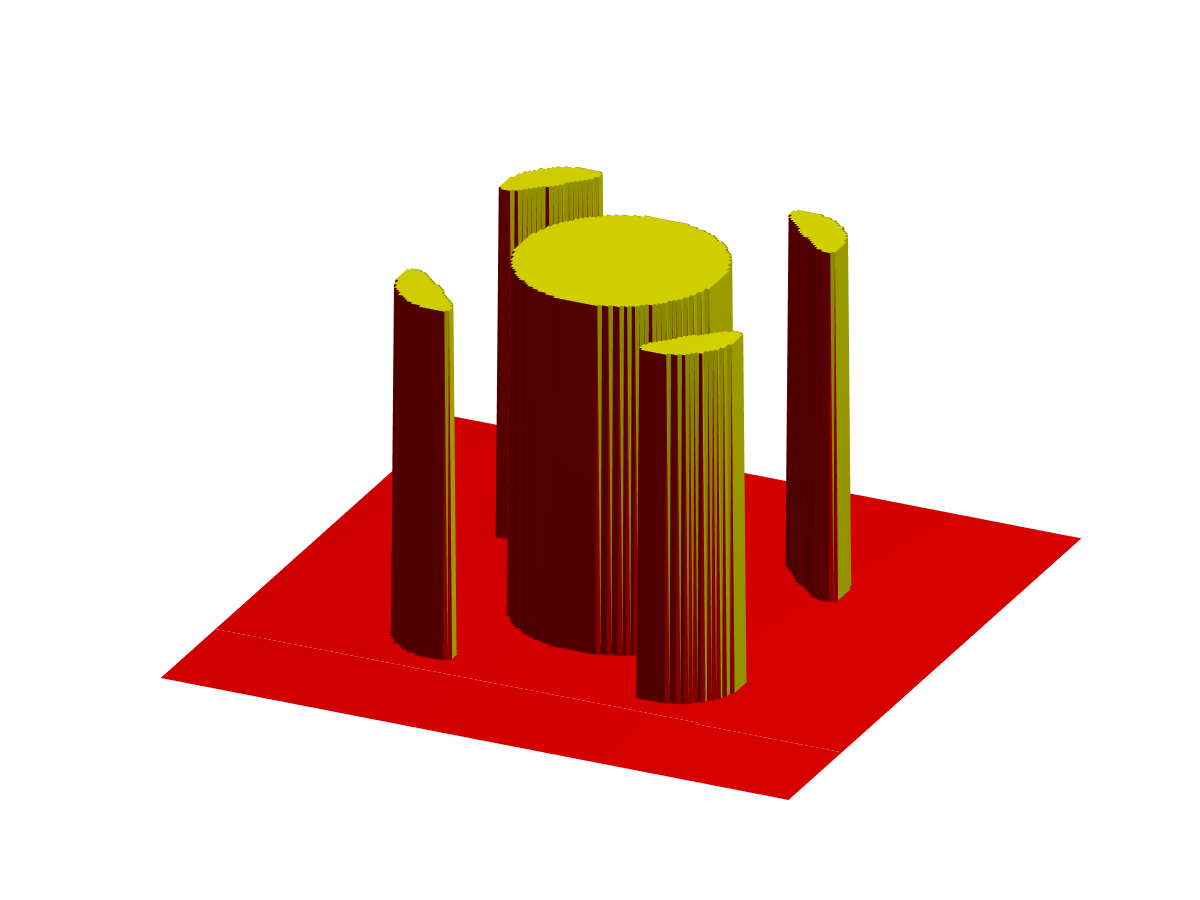}}\\
	\subfloat[$\alpha=10^{2}$]{\label{fig:ej2}\includegraphics[width=0.5\textwidth]{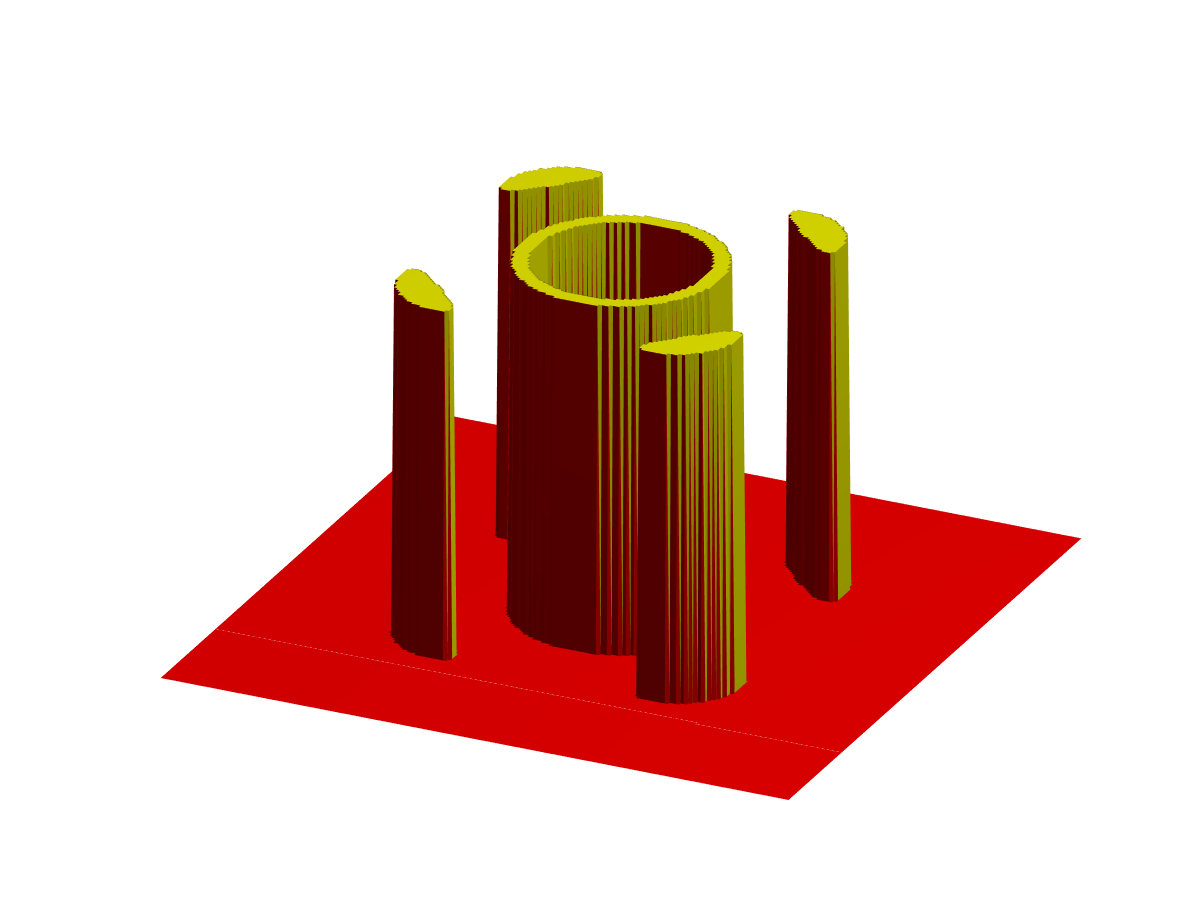}}
	\subfloat[$\alpha=10^{4}$]{\label{fig:gj2}\includegraphics[width=0.5\textwidth]{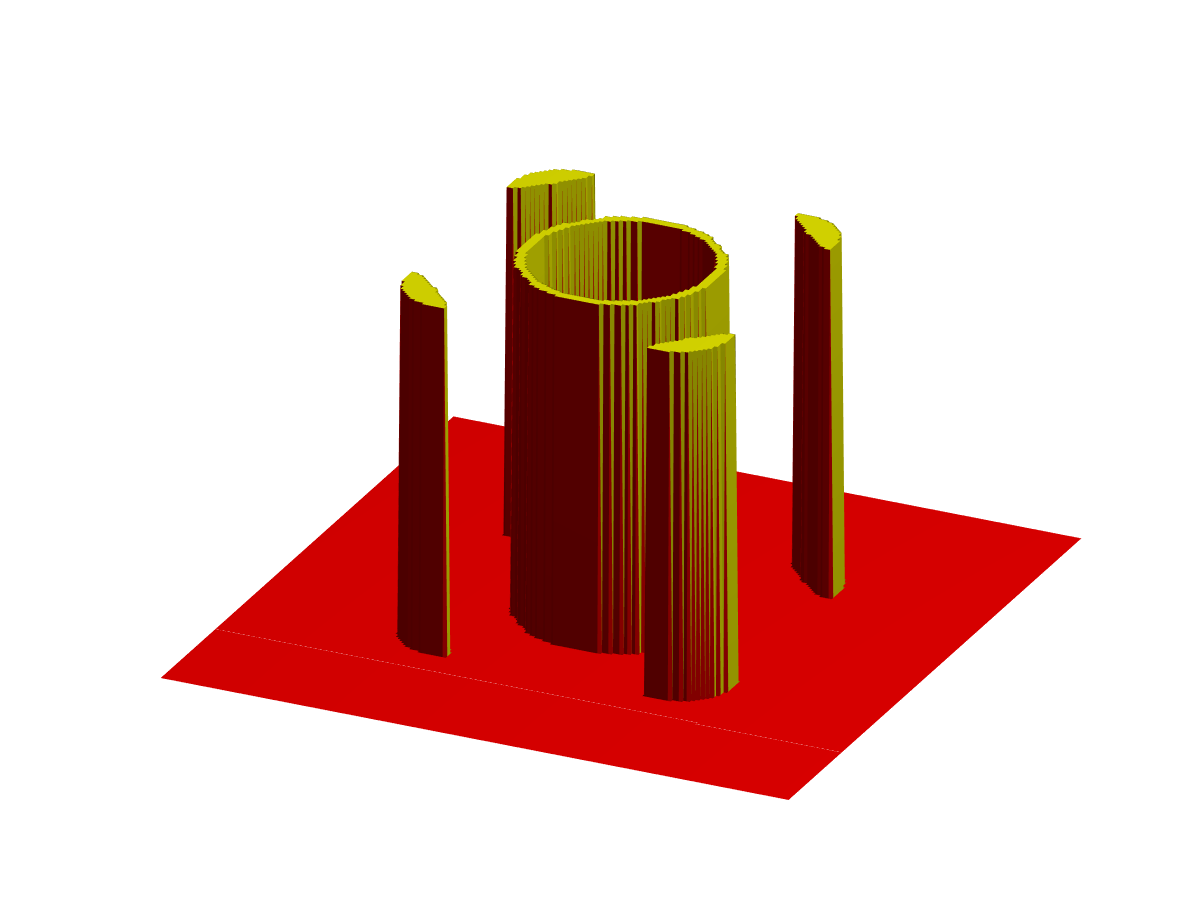}}	
	\caption{Test 8. (a) first eigenmode of the Riccati operator. (b) within the level-set method, the actuator is updated according to the zero level-set of the function $\psi_h^{n+1}$. (c) to (f) optimal actuators for different volume penalties.}
	\label{fig:testj2}
\end{figure}

\begin{figure}[!h]
	\centering
	\includegraphics[width=0.6\textwidth]{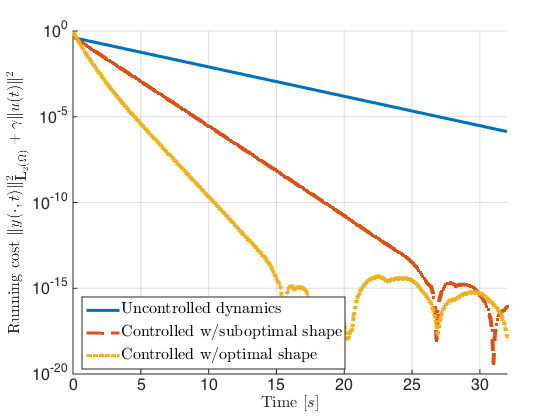}\\
	\caption{Test 8. Closed-loop performance for different shapes. The running cost in $\cJ_2$ is evaluated for uncontrolled dynamics ($u\equiv 0$), a suboptimal cylindrical actuator of size $c$ located in the center of the domain, and the optimal shape  with five components (Figure \ref{fig:gj2}). Closed-loop dynamics of the optimal shape decay faster. }
	\label{fig:perfj2}
\end{figure}

\section*{Concluding remarks} In this work we have developed an analytical and computational framework for optimisation-based actuator design. We derived shape and topological sensitivities formulas which account for the closed-loop performance of a linear-quadratic controller associated to the actuator configuration. We embedded the sensitivities into gradient-based and level-set methods to numerically realise the optimal actuators. Our findings seem to indicate that from a practical point of view, shape sensitivities are a good alternative whenever a certain parametrisation of the actuator is fixed in advance and only optimal position is sought. Topological sensitivities are instead suitable for optimal actuator design in a wider sense, allowing the emergence of nontrivial multi-component structures, which would be difficult to guess or parametrise a priori. This is a relevant fact, as most of the engineering literature associated to computational optimal actuator positioning is based on heuristic methods which strongly rely on experts' knowledge and tuning. Extensions concerning robust control design and semilinear parabolic equation are in our research roadmap.

\section*{Appendix}

\subsection*{Differentiability of maximum functions}

In order to prove Lemma~\ref{lem:maxmin} we recall the following
Danskin-type lemma.

Let $\mathfrak V_1$ be a nonempty set and let $\mathcal G:[0,\tau]\times \mathfrak V_1\rightarrow \R$ be a function, $\tau >0$. Introduce the function $g_1:[0,\tau]\rightarrow \R$,
\ben\label{eq:min_X}
g_1(t) := \sup_{x\in \mathfrak V_1} \mathcal G(t,x),
\een
and  let $\ell:[0,\tau]\rightarrow \R$ be any function such that $\ell(t)>0$ for $t\in (0,\tau]$ and $\ell(0)=0$.
We give sufficient conditions that guarantee that the limit
\ben
\frac{d}{d\ell} g_1(0^+) := \lim_{t\searrow 0} \frac{g_1(t)-g_1(0)}{\ell(t)}
\een
exists. For this purpose we introduce for each $t$ the set of maximisers
\ben\label{eq:set_Xt2}
\mathfrak V_1(t) = \{x^t\in \mathfrak V_1:\; \sup_{x\in \mathfrak V_1} \mathcal G(t,x) = \mathcal G(t,x^t) \}.
\een
The next lemma can be found with slight modifications in \cite[Theorem 2.1, p. 524]{DEZO11}.

\begin{lemma}\label{lem:danskin2}
	Let the following hypotheses be satisfied.
	\begin{itemize}
		\item[(A1)]
		\begin{itemize}
			\item[$(i)$]  For all $t$ in $[0,\tau]$  the set $\mathfrak V_1(t)$  is nonempty,
			\item[$(ii)$] the limit
			\ben
			\partial_\ell \mathcal G(0^+,x) := \lim_{t\searrow 0} \frac{\mathcal G(t,x)-\mathcal G(0,x) }{\ell(t)}
			\een  exists for all $x\in \mathfrak V_1(0)$.
		\end{itemize}
		\item[(A2)] For all real null-sequences $(t_n)$ in $(0,\tau]$ and all sequence $(x_{t_n})$ in $\mathfrak V_1(t_n)$, there exists a subsequence $(t_{n_k})$ of $(t_n)$, $(x_{t_{n_k}})$ in $\mathfrak V_1(t_{n_k})$  and $x_0$ in $\mathfrak V_1(0)$, such that
		\ben
		\lim_{k\rightarrow  \infty} \frac{\mathcal G( t_{n_k} ,x_{t_{n_k}}) -
			\mathcal G(0,x_{t_{n_k}})}{\ell(t_{n_k})} = \partial_\ell \mathcal G(0^+,x_0).
		\een	
	\end{itemize}
	Then $g_1$  is differentiable at $t=0^+$ with derivative
	\ben
	\frac{d}{d\ell}g_1(t)|_{t=0^+} = \max_{x\in  \mathfrak V_1(0) }\partial_\ell \mathcal G(0^+,x).
	\een
\end{lemma}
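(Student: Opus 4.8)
\textbf{Proof proposal for Lemma~\ref{lem:danskin2}.}

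The plan is to establish the two matching inequalities
\[
\limsup_{t\searrow 0}\frac{g_1(t)-g_1(0)}{\ell(t)}\le \max_{x\in\mathfrak V_1(0)}\partial_\ell\mathcal G(0^+,x)
\quad\text{and}\quad
\liminf_{t\searrow 0}\frac{g_1(t)-g_1(0)}{\ell(t)}\ge \max_{x\in\mathfrak V_1(0)}\partial_\ell\mathcal G(0^+,x),
\]
which together give the claim. For the lower bound, fix an arbitrary $x\in\mathfrak V_1(0)$. Since $g_1(0)=\mathcal G(0,x)$ (as $x$ is a maximiser at $t=0$) and $g_1(t)\ge\mathcal G(t,x)$ for every $t$ by definition of the supremum, we get
\[
\frac{g_1(t)-g_1(0)}{\ell(t)}\ge\frac{\mathcal G(t,x)-\mathcal G(0,x)}{\ell(t)}\xrightarrow[t\searrow 0]{}\partial_\ell\mathcal G(0^+,x)
\]
by hypothesis (A1)(ii). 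Taking $\liminf$ on the left and then the supremum over $x\in\mathfrak V_1(0)$ yields the lower bound; note the maximum is attained because (A2), applied to the constant sequence, forces $\mathfrak V_1(0)$ to be rich enough, but in any case we only need the supremum here and can upgrade to a maximum once the upper bound matches it.

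For the upper bound I would argue by contradiction along a subsequence. Suppose the $\limsup$ strictly exceeds $M:=\max_{x\in\mathfrak V_1(0)}\partial_\ell\mathcal G(0^+,x)$ (or, if the max is not yet known to be attained, exceeds $\sup_{x\in\mathfrak V_1(0)}\partial_\ell\mathcal G(0^+,x)$). Then there is a null-sequence $(t_n)$ in $(0,\tau]$ and $\delta>0$ with $\frac{g_1(t_n)-g_1(0)}{\ell(t_n)}\ge M+\delta$ for all $n$. For each $n$ pick $x_{t_n}\in\mathfrak V_1(t_n)$, which exists by (A1)(i), so that $g_1(t_n)=\mathcal G(t_n,x_{t_n})$. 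Since $g_1(0)\ge\mathcal G(0,x_{t_n})$ by definition of the supremum at $t=0$, we obtain
\[
\frac{g_1(t_n)-g_1(0)}{\ell(t_n)}\le\frac{\mathcal G(t_n,x_{t_n})-\mathcal G(0,x_{t_n})}{\ell(t_n)}.
\]
Now invoke (A2): pass to a subsequence $(t_{n_k})$ and $x_0\in\mathfrak V_1(0)$ along which the right-hand side converges to $\partial_\ell\mathcal G(0^+,x_0)\le M$. This contradicts $\frac{g_1(t_{n_k})-g_1(0)}{\ell(t_{n_k})}\ge M+\delta$, completing the upper bound. Combining the two inequalities shows the limit exists and equals $M$, and in particular the supremum in the lower-bound step is attained, so it is legitimate to write it as a maximum.

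The main obstacle is purely bookkeeping: one must be careful that the supremum $\sup_{x\in\mathfrak V_1(0)}\partial_\ell\mathcal G(0^+,x)$ used in the lower bound and the quantity $M$ appearing as the target are reconciled. The clean way is to run the upper-bound argument first against the supremum (not yet known to be a max), conclude $\limsup\le\sup$, then run the lower bound to get $\liminf\ge\sup$; equality of $\liminf$ and $\limsup$ forces the limit to exist and equal the supremum, and the lower-bound computation, now known to equal the limit for the specific $x_0$ produced by (A2), shows the supremum is attained at $x_0$. No compactness or topology on $\mathfrak V_1$ is needed beyond what is packaged into (A2); all the analytic content has been absorbed into that hypothesis, so the proof is essentially a two-sided squeeze.
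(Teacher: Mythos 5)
Your proof is correct. Note that the paper itself does not prove Lemma~\ref{lem:danskin2} at all: it only remarks that the statement ``can be found with slight modifications'' in \cite[Theorem 2.1, p.~524]{DEZO11}, and then uses it as a black box to prove Lemma~\ref{lem:maxmin}. Your two-sided squeeze is exactly the standard argument behind that cited result, and it also mirrors the inequality bookkeeping the authors themselves perform when deducing Lemma~\ref{lem:maxmin} (compare \eqref{eq:difference_A1_ge}--\eqref{eq:difference_A1_le} and \eqref{eq:difference_ge}--\eqref{eq:difference_le}): lower bound from $g_1(t)\ge\mathcal G(t,x)$, $g_1(0)=\mathcal G(0,x)$ for $x\in\mathfrak V_1(0)$ plus (A1)(ii); upper bound from $g_1(t_n)=\mathcal G(t_n,x_{t_n})$, $g_1(0)\ge\mathcal G(0,x_{t_n})$ for maximisers supplied by (A1)(i), combined with the subsequence extraction of (A2). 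Two small points deserve tightening. First, the aside that ``(A2) applied to the constant sequence'' forces attainment is spurious (a null sequence cannot be constant) and you rightly discard it; the clean attainment argument is the one you sketch at the end, and it should be stated explicitly: applying (A2) to an arbitrary null sequence with maximisers $x_{t_n}\in\mathfrak V_1(t_n)$ gives $x_0\in\mathfrak V_1(0)$ with $\partial_\ell\mathcal G(0^+,x_0)=\lim_k\bigl(\mathcal G(t_{n_k},x_{t_{n_k}})-\mathcal G(0,x_{t_{n_k}})\bigr)/\ell(t_{n_k})\ge\limsup_k\bigl(g_1(t_{n_k})-g_1(0)\bigr)/\ell(t_{n_k})\ge\sup_{x\in\mathfrak V_1(0)}\partial_\ell\mathcal G(0^+,x)$, while the reverse inequality is trivial since $x_0\in\mathfrak V_1(0)$; this both attains the supremum and shows it is finite. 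Second, your contradiction step implicitly assumes the supremum is finite; the same sandwich disposes of the case $\sup=+\infty$, so it is worth a sentence. Neither point is a genuine gap, only a matter of writing the last step out.
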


\subsection*{Proof of Lemma~\ref{lem:maxmin}}
Our strategy is to prove Lemma~\ref{lem:maxmin} by applying \\
 Lemma~\ref{lem:danskin2} to the function $\mathcal G(t,y):= \inf_{x\in \mathfrak V}  G(t,x,y)$ with $\mathfrak V_1 := \mathfrak V$. This will show that $g(t) := \sup_{y\in \mathfrak V} \mathcal G(t,y)$ is right-differentiable at $t=0^+$. By construction Assumption~(A0) of Lemma~\ref{lem:maxmin} is satisfied.

Step 1: For every $t\in [0,\tau]$ and $y\in \mathfrak V$ we have $\mathcal G(t,y) = G(t,x^{t,y},y)$. Hence
\ben\label{eq:difference_A1_ge}
\begin{split}
	\mathcal G(t,y)-\mathcal G(0,y) = & G(t,x^{t,y},y) - G(0,x^{0,y},y) \\
	& = G(t,x^{t,y},y) - G(0,x^{t,y},y) + \underbrace{G(0,x^{t,y},y)- G(0,x^{0,y},y)}_{\ge 0} \\
	& \ge G(t,x^{t,y},y) - G(0,x^{t,y},y)
\end{split}
\een
and similarly
\ben\label{eq:difference_A1_le}
\begin{split}
	\mathcal G(t,y)-\mathcal G(0,y) = & G(t,x^{t,y},y) - G(0,x^{0,y},y) \\
	& = \underbrace{G(t,x^{t,y},y) - G(t,x^{0,y},y)}_{\le 0} + G(t,x^{0,y},y)- G(0,x^{0,y},y) \\
	& \le  G(t,x^{0,y},y)- G(0,x^{0,y},y).
\end{split}
\een
Therefore using Assumption~(A2) of Lemma~\ref{lem:maxmin} we obtain from \eqref{eq:max_min_a1} and \eqref{eq:max_min_a2}
\ben
\liminf_{t\searrow 0} \frac{\mathcal G(t,y)-\mathcal G(0,y)}{\ell(t)} \ge  \partial_\ell G(0^+,x^{0,y},y) \ge \limsup_{t\searrow 0} \frac{\mathcal G(t,y)-\mathcal G(0,y)}{\ell(t)}.
\een
Hence Assumption~(A1) of Lemma~\ref{lem:danskin2} is satisfied.

Step 2: For every $t\in [0,\tau]$ and $y^t\in \mathfrak V(t)$ we have
$\mathcal G(t,y^t) = G(t,x^{t,y^t},y^t)$ and hence
\ben\label{eq:difference_ge}
\begin{split}
	\mathcal G(t,y^t)-\mathcal G(0,y^t)  = &  G(t,x^{t,y^t},y^t) - G(0,x^{0,y^t},y^t) \\
	= & G(t,x^{t,y^t},y^t) - G(0,x^{t,y^t},y^t) + \underbrace{G(0,x^{t,y^t},y^t) - G(0,x^{0,y^t},y^t)}_{\ge 0}\\
	\ge & G(t,x^{t,y^t},y^t) - G(0,x^{t,y^t},y^t)
\end{split}
\een
and similarly
\ben\label{eq:difference_le}
\begin{split}
	\mathcal G(t,y^t)-\mathcal G(0,y^t)  = & \underbrace{G(t,x^{t,y^t},y^t) - G(t,x^{0,y^t},y^t)}_{\le 0} + G(t,x^{0,y^t},y^t) - G(0,x^{0,y^t},y^t)\\
	\le & G(t,x^{0,y^t},y^t) - G(0,x^{0,y^t},y^t).
\end{split}
\een
Thanks to Assumption (A3) of Lemma~\ref{lem:maxmin} For all real null-sequences $(t_n)$ in $(0,\tau]$ and all sequences $(y^{t_n})$,  $y^{t_n}\in \mathfrak V(t_n)$, there exists a subsequence $(t_{n_k})$ of $(t_n)$, $(y^{t_{n_k}})$ of $(y^{t_n})$, and $y^0$ in $\mathfrak V(0)$, such that
\ben\label{eq:limit1}
\lim_{k\rightarrow  \infty} \frac{G( t_{n_k} ,x^{t_{n_k},y^{t_{n_k}}}, y^{t_{n_k}} ) -
	G(0,x^{t_{n_k},y^{t_{n_k}}}, y^{t_{n_k}})}{\ell(t_{n_k})} = \partial_\ell G(0^+,x^{0,y^0},y^0)
\een	
and
\ben\label{eq:limit2}
\lim_{k\rightarrow  \infty} \frac{G( t_{n_k} ,x^{0,y^{t_{n_k}}}, y^{t_{n_k}} ) -
	G(0,x^{0,y^{t_{n_k}}}, y^{t_{n_k}})}{\ell(t_{n_k})} = \partial_\ell G(0^+,x^{0,y^0},y^0).
\een
Hence choosing $t=t_{n_k}$ in \eqref{eq:difference_ge} we obtain
\ben\label{eq:ge}
\begin{split}
	&\liminf_{k\to \infty}\frac{ \mathcal G(t_{n_k},y^{t_{n_k}})-\mathcal G(0,y^{t_{n_k}}) }{\ell(t_{n_k})} \\
	 &  \stackrel{\eqref{eq:difference_ge}}{\ge}   \liminf_{k\rightarrow  \infty} \frac{G( t_{n_k} ,x^{t_{n_k},y^{t_{n_k}}}, y^{t_{n_k}} ) -
		G(0,x^{t_{n_k},y^{t_{n_k}}}, y^{t_{n_k}})}{\ell(t_{n_k})} \\
	& \stackrel{\eqref{eq:limit1}}{=}\partial_\ell G(0^+,x^{0,y^0},y^0)
\end{split}
\een
and similarly choosing $t=t_{n_k}$ in \eqref{eq:difference_le} we get
\ben\label{eq:le}
\begin{split}
	&\limsup_{k\to \infty}\frac{ \mathcal G(t_{n_k},y^{t_{n_k}})-\mathcal G(0,y^{t_{n_k}}) }{\ell(t_{n_k})}  \\
	& \stackrel{\eqref{eq:difference_le}}{\le}   \limsup_{k\rightarrow  \infty} \frac{G( t_{n_k} ,x^{0,y^{t_{n_k}}}, y^{t_{n_k}} ) -
		G(0,x^{0,y^{t_{n_k}}}, y^{t_{n_k}})}{\ell(t_{n_k})} \\
	& \stackrel{\eqref{eq:limit2}}{=}\partial_\ell G(0^+,x^{0,y^0},y^0).
\end{split}
\een
Combining \eqref{eq:ge} and \eqref{eq:le} we conclude that
\ben
\lim_{k\to \infty}\frac{ \mathcal G(t_{n_k},y^{t_{n_k}})-\mathcal G(0,y^{t_{n_k}}) }{\ell(t_{n_k})}  = \partial_\ell G(0^+,x^{0,y^0},y^0),
\een
which is precisely Assumption (A2) of Lemma~\ref{lem:danskin2}.

Step 1 and Step 2 together show that Assumptions (A1) and (A2) of Lemma~\ref{lem:danskin2} are satisfied and this finishes the proof.

\subsection*{Acknowledgement}
The authors gratefully  acknowledge the fact that Prof. D. Wachsmuth pointed out that the original version of Theorem~\ref{thm:diff_G_top} contained a mistake. 

\bibliographystyle{siamplain}
\bibliography{refs}
\end{document}